\documentclass[11pt]{amsart}
\usepackage[a4paper,margin=1in]{geometry}
\usepackage{amsmath,amssymb,amsfonts,amsthm,mathtools,mathrsfs,enumitem,microtype,booktabs}
\usepackage[colorlinks=true,linkcolor=blue,citecolor=blue,urlcolor=blue]{hyperref}
\numberwithin{equation}{section}
\allowdisplaybreaks

\newtheorem{theorem}{Theorem}[section]
\newtheorem{proposition}[theorem]{Proposition}
\newtheorem{lemma}[theorem]{Lemma}

\newtheorem{kernelestimate}[theorem]{Kernel estimate}

\theoremstyle{definition}
\newtheorem{definition}[theorem]{Definition}
\newtheorem{remark}[theorem]{Remark}

\newcommand{\Fp}{\mathbb F_p}
\newcommand{\E}{\mathbb E}
\newcommand{\A}{\mathbb A}

\title[Polynomial corners beyond distinct degrees]{Polynomial corners in finite fields beyond the distinct-degree case}

\author{Ji Li}
\address{School of Mathematical and Physical Sciences, Macquarie University, NSW 2109, Australia}
\email{ji.li@mq.edu.au}

\author{Chun-Yen Shen}
\address{Department of Mathematics, National Taiwan University, Taipei 10617, Taiwan}
\email{cyshen@math.ntu.edu.tw}

\author{Tuyen Trung Truong}
\address{Department of Mathematics, University of Oslo, P.O. Box 1053, Blindern, 0316 Oslo, Norway}
\email{tuyentt@math.uio.no}

\author{Liangchuan Wu}
\address{School of Mathematical Sciences, Anhui University, Hefei, 230601, P.R.~China}
\email{wuliangchuan@ahu.edu.cn}

\subjclass[2020]{11B30, 11T23, 42B25, 14F20}
\keywords{Polynomial Roth theorem, polynomial corners, finite fields, exponential sums, perverse sheaves}

\begin{document}

\begin{abstract}

We prove a quantitative polynomial Roth theorem for corners in \(\mathbb F_p^2\) for arbitrary pairs of linearly independent polynomials. More precisely, given a positive integer $d$, there are constants  $p_0$ and $C$ (depending only on $p$) so that for every $ p>p_0$,  if  polynomials \(\phi_1,\phi_2\in \mathbb \mathbb{F}_p [y]\) are of degree $\leq d$ vanishing at $0$ and are not linearly dependent, then every \(A\subset\mathbb F_p^2\) with $
|A|\ge C p^{2-1/14} $ contains a nontrivial corner
$$
(x_1,x_2),\qquad
(x_1+\phi_1(y),x_2),\qquad
(x_1,x_2+\phi_2(y))
$$
for some \(y\in\mathbb F_p^\times\). This improves the estimate $p^{2-1/16}$ of Han--Lacey--Yang and removes the distinct-degree restriction from their quantitative theorem.

%

The main obstruction is the equal-degree resonant case, where the
Jacobian argument of Han--Lacey--Yang degenerates.  We adjoin the
frequency-independent part of the phase to form an augmented map
\(\widetilde F:W\to\mathbb A^3\) from the correlation threefold.  We
prove that this map is generically finite on every top-dimensional
geometric component and has no two-dimensional fibre.  Using the
associated Artin--Schreier sheaf and Katz--Laumon estimates for Fourier transform of
perverse sheaves, we obtain square-root cancellation outside an
algebraic exceptional set of dimension at most one and uniformly bounded
degree.  A separate curve-sum argument gives uniform control on the
exceptional set.  An \(\ell^2\) matrix estimate adapted to such sets
completes the resonant case.

\end{abstract}
\maketitle

\section{Introduction}

Polynomial configurations in subsets of finite fields provide a natural
quantitative setting for questions originating in Roth's and Szemer\'edi's
theorems.  In several variables, even three-point configurations can exhibit
genuinely multidimensional phenomena, since different polynomial shifts may
act in different coordinate directions.

Let $p$ be a prime, and let $\phi_1,\phi_2\in\mathbb Z[y]$ satisfy
$
  \phi_1(0)=\phi_2(0)=0.
$

For $A\subset\Fp^2$, a \emph{polynomial corner} in $A$ is a configuration
$$
  (x_1,x_2),\qquad
  (x_1+\phi_1(y),x_2),\qquad
  (x_1,x_2+\phi_2(y))
$$
contained in $A$, where $x=(x_1,x_2)\in\Fp^2$ and
$y\in\Fp^\times$.  Since the two shifts act in different coordinate
directions, the distribution of $A$ along both rows and columns plays an
essential role.

The qualitative background begins with the polynomial extensions of
Szemer\'edi's theorem of Bergelson--Leibman
\cite{BergelsonLeibman}.  Quantitative polynomial Roth-type theorems over
finite fields were developed in several settings by Bourgain--Chang
\cite{BourgainChang}, Peluse \cite{Peluse}, and Dong--Li--Sawin
\cite{DLS}.  Kuca obtained power-saving estimates for multidimensional
polynomial configurations, first for families of distinct degrees
\cite{KucaDistinct}, and later for fixed linearly independent integer
polynomials without that restriction \cite{Kuca}.

For the two-coordinate corner problem, Han--Lacey--Yang \cite{HLY} proved
that if $\phi_1$ and $\phi_2$ are linearly independent and either have
distinct degrees or are both quadratic, then every
$$
  A\subset\Fp^2,
  \qquad
  |A|\gg p^{2-1/16},
$$
contains a nontrivial polynomial corner.  More recently, Lim \cite{Lim}
obtained an asymptotic counting formula for fixed rational functions
$P,Q\in\mathbb Q(t)$ with $1,P,Q$ linearly independent over $\mathbb Q$,
and deduced the explicit density exponent $1/122880$.

Our result improves the Han--Lacey--Yang \cite{HLY}  exponent and removes the
distinct-degree restriction altogether.

\begin{theorem}\label{thm:main}
For every positive integer $d>0$, there exist constants
$C=C(d)>0$ and $p_0=p_0(d)>0$ and  with the following property.  If polynomials $\phi_1,\phi_2\in\mathbb{F}_p[y]$ of degree $\leq d$ vanishing at $0$ and are not linearly dependent, 
then every
$A\subset\Fp^2$ with
$$
  |A|\geq Cp^{2-1/14}
$$
contains a polynomial corner with parameter $y\in\Fp^\times$.
\end{theorem}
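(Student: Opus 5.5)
The plan is to count corners with a trilinear form and show that the count is dominated by its main term. For functions $f_0,f_1,f_2:\Fp^2\to[0,1]$ set
$$
\Lambda(f_0,f_1,f_2)=\E_{x\in\Fp^2}\E_{y\in\Fp^\times} f_0(x_1,x_2)f_1(x_1+\phi_1(y),x_2)f_2(x_1,x_2+\phi_2(y)).
$$
Take $f_0=f_1=f_2=1_A$ and write $\delta=|A|/p^2$. It suffices to prove
$$
\Lambda(1_A,1_A,1_A)\ge c\,\delta^{3}-O(p^{-1/2})\cdot(\text{lower-order factors}).
$$
Degenerate corners with $y$ a root of $\phi_1$ or $\phi_2$ contribute only $O(d\,|A|/p)$ and are removed at the end. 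The exponent $1/14$ should come out of balancing the density against a power saving of order $p^{-1/2}$ after Cauchy--Schwarz steps. The main term is produced by a "row--column" decomposition. Because the $\phi_1$-shift moves along the first coordinate and the $\phi_2$-shift along the second, we apply Cauchy--Schwarz in $x_2$ to eliminate $f_2$'s dependence on $x_1$, and in $x_1$ to eliminate $f_1$'s dependence on $x_2$. This reduces matters to a form in which only the partial Fourier coefficients $\widehat{f_1}(\xi_1,x_2)$ and $\widehat{f_2}(x_1,\xi_2)$ appear. The zero frequencies give the main term $\E_x f_0(x)\,\E_{x_1'}f_1(x_1',x_2)\,\E_{x_2'}f_2(x_1,x_2')$, and by Hölder/Cauchy--Schwarz (the "corners lower bound" for product-type sets) this is $\gg\delta^{3}$.

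For the error term we expand in Fourier and average over $y$. This produces the exponential sums
$$
S(\xi_1,\xi_2;\cdot)=\sum_{y\in\Fp^\times} e_p\bigl(\xi_1\phi_1(y)+\xi_2\phi_2(y)\bigr)
$$
and, after a further Cauchy--Schwarz, the correlation sums $\sum_{y,y'}e_p(\xi_1(\phi_1(y)-\phi_1(y'))+\xi_2(\phi_2(y)-\phi_2(y'))+\dots)$ parametrised by a correlation threefold $W$. If $\phi_1,\phi_2$ have distinct degrees, the Han--Lacey--Yang Jacobian argument gives square-root cancellation uniformly in the frequencies, with a constant depending only on $d$ via Weil/Deligne. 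The new case is equal degrees. There linear independence means that some combination $\xi_1\phi_1+\xi_2\phi_2$ drops degree on a line of frequencies, and the Jacobian degenerates.

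We handle the equal-degree case following the abstract. We adjoin the frequency-independent part of the phase to obtain the augmented map $\widetilde F:W\to\A^3$. Next we show that $\widetilde F$ is generically finite on each top-dimensional geometric component and has no two-dimensional fibre. Then $R\widetilde F_!$ of the Artin--Schreier sheaf, after shift, is perverse of bounded complexity. Its Fourier transform, by Katz--Laumon, is then pure of the right weight and lisse outside a closed set $E\subset\A^2$ of frequencies of dimension $\le1$ with degree $O_d(1)$. This yields the bound $|\text{correlation sum}|\ll_d p^{3/2}$ (square root) off $E$. On $E$ we use a separate one-variable curve-sum argument to get a weaker but uniform bound, $\ll_d p^{2}$ say. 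We then need an $\ell^2$ matrix estimate: a bilinear form whose kernel has square-root bounds off a curve of bounded degree, and trivial-ish bounds on the curve, has operator norm $\ll p^{-1/2+o(1)}$ relative to the trivial bound. Here we use that a bounded-degree curve meets each horizontal or vertical line in $O_d(1)$ points (or contains it, which is excluded using linear independence). Plugging this in gives an error $O_d(\delta^{a}p^{-b})$. Comparing with $\delta^3$ yields $\delta\ge Cp^{-1/14}$.

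I expect the main obstacle to be the geometric step: proving that $\widetilde F$ has no two-dimensional fibres and is generically finite on every component, uniformly in $p>p_0(d)$ and in all $\phi_1,\phi_2$ of degree $\le d$. This needs a spreading-out/constructibility argument over the parameter space of pairs of polynomials, so that the complexity bounds and the exceptional-set degree depend only on $d$. The first thing I would try is to compute the fibres explicitly in terms of the leading coefficients and to rule out positive-dimensional families using linear independence.
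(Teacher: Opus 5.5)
Your exponential-sum architecture (augmented map $\widetilde F$, Katz--Laumon off a bounded-degree curve, a separate uniform $p^2$ bound on that curve, Schur's test) is the paper's. The genuine gap is the step meant to produce the exponent $1/14$, which does not work as stated.

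With the operator bound $\|T_H\|_{\ell^2\to\ell^2}\lesssim p^{-1/2}$, the $TT^*$ argument for the fully non-degenerate term gives, for $g_1=f-r$ and $g_2=f-c$,
\[
\|J_3(g_1,g_2)\|_2^2\lesssim p^{-1}\|g_1\|_2^2\|g_2\|_2^2+p^{-1/4}\mathcal R(g_1)^{1/2}\mathcal C(g_2)^{1/2}.
\]
Here $\mathcal R(g)=\E_{x_2}(\E_{x_1}|g|^2)^2$ and $\mathcal C(g)=\E_{x_1}(\E_{x_2}|g|^2)^2$. For $f=\mathbf 1_A$ these fibre energies are $V_R=\E_{x_2}(r(1-r))^2$ and $V_C=\E_{x_1}(c(1-c))^2$. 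In terms of $\delta$ alone, nothing better than $V_R,V_C\lesssim\delta$ is available. So the error is bounded by $p^{-1/8}\delta$, and comparing this with $\delta^3$, as you propose, only gives $\delta\gg p^{-1/16}$. That is the Han--Lacey--Yang threshold, not $p^{-1/14}$.

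Rearranging the Cauchy--Schwarz steps does not fix this. If $A$ consists of $\delta p$ half-filled rows together with $\delta p$ half-filled columns, then $V=V_RV_C\asymp\delta^2$. For such sets the available bound on the $J_3$ contribution really is of order $p^{-1/8}\delta$.

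The missing idea is that the same row/column concentration also enlarges the main term $M=\E_xf(x)c(x_1)r(x_2)$. You must keep the fibre energies rather than majorize them by $\|g_i\|_4^4$. You then need a second lower bound, $M\ge R_2C_2$ with $R_2=\E r^2$ and $C_2=\E c^2$. Equivalently, $D_2E_2\le p^2S$, where $d,e$ are the column and row degrees of $A$, $D_2=\sum d^2$, $E_2=\sum e^2$, and $S=\sum_{(x_1,x_2)\in A}d(x_1)e(x_2)$. This follows from Cauchy--Schwarz and $d(x_1)\le p$. In the example above it gives $M\gtrsim\delta^2\gg\delta^3$. Since $V_R\le R_2$ and $V_C\le C_2$, one gets $M\ge\max(\delta^3,V)$, and hence
\[
p^{-1/8}\delta^{1/2}V^{1/4}\le p^{-1/8}\delta^{-7/4}\max(\delta^3,V)\le p^{-1/8}\delta^{-7/4}M .
\]
This is absorbed exactly when $\delta\gg p^{-1/14}$.

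There are also three smaller points.
\begin{enumerate}
\item Linear independence does not rule out vertical or horizontal components of the exceptional locus, and the paper in fact covers isolated exceptional points by vertical lines. Such lines are harmless: a kernel supported on one row or column with entries $O(p^{-1})$ has operator norm $O(p^{-1/2})$.
\item You need the operator bound $O_d(p^{-1/2})$ with no $p^{o(1)}$ loss. Otherwise you do not obtain $|A|\ge Cp^{2-1/14}$ with a constant $C$.
\item Since $\widetilde F$ may have one-dimensional fibres, $R\widetilde F_!\overline{\mathbb Q}_\ell[3]$ is only semiperverse, not perverse. The two geometric properties you list are exactly the semiperversity conditions. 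That suffices for generic vanishing above the middle degree, which is all the argument uses; purity is neither available nor needed.
\end{enumerate}
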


The proof has two main new ingredients.  The first is analytic and is
responsible for the exponent $1/14$.  The second is geometric and resolves
the equal-degree resonant case, where the non-degeneracy mechanism in the
previous correlation estimate breaks down. 

\smallskip
{\bf The fibre-energy mechanism.} \ \ 
We use the Han--Lacey--Yang decomposition of the normalized corner count
into a main term, two partially degenerate terms, and a fully
non-degenerate bilinear term $J_3$.  Their $TT^*$ argument reduces the
estimate for $J_3$ to an $\ell^2$-operator bound for a one-dimensional
correlation matrix associated with each nonzero frequency shift
$$
  H=(H_1,H_2)\in\Fp^2.
$$

The required operator norm is $O(p^{-1/2})$, giving a $p^{-1/4}$ gain in
the estimate for $J_3$.

Our quantitative improvement comes from retaining the row and column fibre
energies that arise before the final $L^4$ majorization.  Let
$f=\mathbf 1_A$, $\delta=|A|/p^2$, and define
$$
  r(x_2)=\E_{x_1}f(x_1,x_2),
  \qquad
  c(x_1)=\E_{x_2}f(x_1,x_2).
$$

For the centred functions $f-r$ and $f-c$, set
$$
  V_R=\E_{x_2}\bigl(r(x_2)(1-r(x_2))\bigr)^2,
  \qquad
  V_C=\E_{x_1}\bigl(c(x_1)(1-c(x_1))\bigr)^2,
  \qquad
  V=V_RV_C.
$$
The fully non-degenerate contribution satisfies
$$
  \operatorname{Err}_{J_3}
  \lesssim
  p^{-1/2}\delta^{3/2}
  +
  p^{-1/8}\delta^{1/2}V^{1/4}.
$$
The same fibre structure also strengthens the main term
$$
  M=\E_{x_1,x_2}f(x_1,x_2)c(x_1)r(x_2).
$$
Two complementary bipartite-graph inequalities give
$$
  M\geq\delta^3
  \qquad\text{and}\qquad
  M\geq R_2C_2,
$$
where $R_2=\E r^2$ and $C_2=\E c^2$.  Since
$V_R\leq R_2$ and $V_C\leq C_2$,
$$
  M\geq\max(\delta^3,V).
$$

It follows that
$$
  p^{-1/8}\delta^{1/2}V^{1/4}
  \leq
  p^{-1/8}\delta^{-7/4}M,
$$
so this error is absorbed once
$
  \delta\gtrsim p^{-1/14}.
$

The remaining error terms are smaller in the same range.

The point is that concentration of $A$ along rows and columns enlarges the
fibre-energy error, but simultaneously forces the main term to be larger.
Keeping this information on both sides of the argument, rather than
replacing it by a global norm estimate, produces the exponent $1/14$.

\smallskip
{\bf The equal-degree resonance.} \ \
The deeper obstruction occurs when
$$
  \deg\phi_1=\deg\phi_2=d>2.
$$
Write
$$
  \phi_1(y)=ay^d+\cdots,
  \qquad
  \phi_2(y)=by^d+\cdots.
$$

Note that Theorem \ref{thm:main} can be also equivalently stated in the form: if $\phi_1,\phi_2\in \mathbb{Z}[y]$ are such that their base change to $\mathbb{F}_p$ of degree $\leq d$ and their base change to $\mathbb{F}_p$ are not linearly dependent, then the conclusion of the theorem holds. This second statement allows us to work with a universal family of polynomials defined over $\mathbb{Z}$, which will be  extensively used for arguments in Section 6. 

The top-degree Jacobian calculation in
\cite[Appendix~A]{HLY} gives the required correlation estimate whenever
$$
  aH_1\neq bH_2.
$$

When $aH_1=bH_2$, however, the coefficient supplying the required rank
vanishes.  These resonant shifts form a one-parameter family.

Three-dimensional exponential sums arising from the same $TT^*$ mechanism
already appear in Dong--Li--Sawin \cite{DLS} and in the Han--Lacey--Yang
analysis \cite{HLY}.  Their pointwise estimates rely on geometric
non-degeneracy conditions for the leading constraint and leading
homogeneous forms.  In the equal-degree resonant regime these conditions
degenerate, so the previous argument cannot simply be extended.

Set
$$
  \alpha=\frac ba,
  \qquad
  D=\phi_2-\alpha\phi_1.
$$

Then $0<\deg D<d$, and every nonzero resonant shift has the form
$$
  H=(\alpha\lambda,\lambda),
  \qquad
  \lambda\in\Fp^\times.
$$

The corresponding correlation is reduced to an exponential sum on the
three-dimensional hypersurface
$$
  W=
  \left\{
    (y_1,y_2,y_3,y_4)\in\mathbb A^4:
    \phi_1(y_1)-\phi_1(y_2)
    =
    \phi_1(y_3)-\phi_1(y_4)
  \right\},
$$
of the form
$$
  S_\lambda(m,m')
  =
  \sum_{w\in W(\Fp)}
  \psi\!\bigl(f_0(w)+mf_1(w)+m'f_2(w)\bigr).
$$

The key geometric idea is to promote the frequency-independent part $f_0$
of the phase to an additional coordinate and consider
$$
  \widetilde F=(f_0,f_1,f_2):
  W\longrightarrow\mathbb A^3.
$$

We prove that $\widetilde F$ is generically finite on every
three-dimensional component of $W$ and contracts no surface to a point.
Katz--Laumon theory \cite{KatzLaumon} then gives a bounded-degree
exceptional locus
$$
  \mathcal Z_\lambda\subset\mathbb A^2_{m,m'},
  \qquad
  \dim\mathcal Z_\lambda\leq1,
$$
such that
$$
  |S_\lambda(m,m')|
  \lesssim_d p^{3/2}
  \qquad
  \bigl((m,m')\notin\mathcal Z_\lambda(\Fp)\bigr).
$$

To control the correlation matrix on the exceptional locus, we prove
separately the global estimate
$$
  |S_\lambda(m,m')|
  \lesssim_d p^2
  \qquad
  \text{for all }(m,m')\in\Fp^2.
$$

This follows by decomposing into fibres of
$\phi_1(x)-\phi_1(y)$ and combining Stein's irreducibility theorem with the
Bombieri--Weil bound on the resulting curve sums.  Thus the generic
$p^{3/2}$ estimate and the global $p^2$ estimate come from different
geometric mechanisms. This gives an estimate whose implied constants (i.e. $p_0,C$) may depend on the pair $\phi _1,\phi _2\in \mathbb{Z}[y]$, and not only on the degree $d$. To obtain a uniform estimate depending only on the degree $d$, we apply the ideas in \cite{BonolisKowalskiWoo} to the universal family parametrizing all such pairs $\phi _1,\phi _2\in \mathbb{Z}[y]$. 

After normalization, the correlation kernel is
$O_d(p^{-3/2})$ away from a bounded-complexity exceptional set and
$O_d(p^{-1})$ on that set.  Bounded-degree curves meet each row and column
in only $O_d(1)$ points unless they contain a vertical or horizontal
component, and those components are handled directly in $\ell^2$.
Schur's test then yields the required
$$
  O_d(p^{-1/2})
$$
operator norm.

The augmented-map mechanism may be useful more generally in resonant
correlation problems where the usual leading-form or Jacobian criterion
degenerates: a frequency-independent part of the phase can sometimes be
promoted to an additional coordinate, allowing generic Fourier cancellation
to be combined with a separate estimate on the exceptional locus.

The paper is organized as follows.
Section~\ref{sec:HLY-reduction} recalls the Han--Lacey--Yang reduction and
isolates the correlation-matrix estimate.
Section~\ref{sec:refined-fibre-energy} proves the refined fibre-energy bound
for $J_3$, and Section~\ref{sec:threshold} derives the exponent $1/14$.
Section~\ref{sec:admissible} treats bounded-complexity exceptional loci and
reduces the equal-degree resonant case to two exponential-sum estimates.
Section~\ref{sec:replacement} proves these estimates.  Finally,
Section~\ref{sec:exponential-sum-preliminaries} records the
algebro-geometric and curve-sum tools used in the proof.

\section{The reduction to the Han--Lacey--Yang  kernel estimate}
\label{sec:HLY-reduction}

In this section we recall the portion of the Han--Lacey--Yang decomposition
that will be used below, while fixing our Fourier normalization and the
associated kernel notation.  The only genuinely non-degenerate estimate in the
argument is isolated in Kernel Estimate~\ref{est:HLY-kernel}.  Sections
\ref{sec:admissible} and \ref{sec:replacement} will later establish this
estimate in the range required for Theorem~\ref{thm:main}.

Throughout the paper, we use the additive character
$\psi(t)=e_p(t):=e^{2\pi i t/p}$ for $t\in\Fp$.
Thus the normalized Fourier transform of a function
$f:\Fp^2\to\mathbb C$ is
$$
  \widehat f(\xi)
  =
  \E_{x\in\Fp^2}f(x)\psi(-x\cdot\xi),
  \qquad \xi\in\Fp^2.
$$
With this normalization, Plancherel's identity takes the form
$$
  \|f\|_{L^2(\Fp^2)}^2
  :=
  \E_{x\in\Fp^2}|f(x)|^2
  =
  \sum_{\xi\in\Fp^2}|\widehat f(\xi)|^2.
$$

For functions $f_1,f_2:\Fp^2\to\mathbb C$, define
$$
  A(f_1,f_2)(x)
  =
  \E_{y\in\Fp}
  f_1(x_1+\phi_1(y),x_2)
  f_2(x_1,x_2+\phi_2(y)).
$$
For a function $f:\Fp^2\to\mathbb C$, we also define the two coordinate
projections
$$
  P_1f(x_1,x_2)
  =
  \E_{u\in\Fp}f(u,x_2),
  \qquad
  P_2f(x_1,x_2)
  =
  \E_{v\in\Fp}f(x_1,v).
$$
Thus $P_1f$ is constant in the first coordinate, while $P_2f$ is
constant in the second coordinate.

The fully non-degenerate bilinear term is
$$
  J_3(g_1,g_2)(x)
  =
  \E_{y\in\Fp}
  g_1(x_1+\phi_1(y),x_2)
  g_2(x_1,x_2+\phi_2(y)),
$$
where the functions $g_1$ and $g_2$ satisfy
$\E_{x_1}g_1(x_1,x_2)=0$ for every $x_2\in\Fp$ and
$\E_{x_2}g_2(x_1,x_2)=0$ for every $x_1\in\Fp$.

\begin{kernelestimate}\label{est:HLY-kernel}
For every nonzero Han--Lacey--Yang frequency shift
$H=(H_1,H_2)\in\Fp^2\setminus\{0\}$,
the non-degenerate difference kernel arising in the $TT^*$-analysis of
$J_3$, made explicit in Lemma~\ref{lem:HLY-matrix-reduction}, satisfies
the operator bound
$$
  \|T_H\|_{\ell^2(\Fp)\to\ell^2(\Fp)}
  \lesssim p^{-1/2}.
$$
Equivalently, the argument of Han--Lacey--Yang, Lemma~4.3, yields the
corresponding $p^{-1/4}$ gain in the estimate for $J_3$.

Throughout Sections~2--5, the letter $H$ denotes this two-dimensional
Han--Lacey--Yang frequency shift.  The scalar resonant parameter introduced later in
Section~\ref{sec:admissible} will be denoted by $\lambda$.
\end{kernelestimate}

\begin{remark}\label{rem:two-shifts}
The distinction between $H$ and $\lambda$ will be important later.
The parameter $H=(H_1,H_2)\in\Fp^2$ is the frequency difference produced
by the Han--Lacey--Yang $TT^*$-argument.  By
contrast, after the equal-degree resonant reduction in
Section~\ref{sec:admissible}, we write
{$H=(\alpha\lambda,\lambda)$ with
$\lambda=H_2\in\Fp^\times$}.  For fixed $\lambda$, the
remaining matrix variables in that section are $m,m'\in\Fp$.
\end{remark}

\begin{lemma}\label{lem:HLY-matrix-reduction}
Let
$
  K(a,b)
  =
  \E_{y\in\Fp}
  \psi\!\bigl(a\phi_1(y)+b\phi_2(y)\bigr),
  \ 
  \widetilde K(a,b)
  =
  K(a,b)\mathbf 1_{b\neq0}.
$
For a nonzero Han--Lacey--Yang shift $H=(H_1,H_2)\in\Fp^2\setminus\{0\}$,
define
$$
  \Delta_H\widetilde K(n,m)
  =
  \widetilde K(n,m)\,
  \overline{\widetilde K(n-H_1,m+H_2)}.
$$
Define the correlation matrix
$$
  C_H(m,m')
  =
  \sum_{n\in\Fp}
  \Delta_H\widetilde K(n,m)\,
  \overline{\Delta_H\widetilde K(n,m')},
$$
and let $T_H$ be the corresponding operator on $\ell^2(\Fp)$:
$$
  (T_HF)(m)
  =
  \sum_{m'\in\Fp}C_H(m,m')F(m').
$$
Then the estimate required in \cite[Lemma~4.3]{HLY} is equivalent,
by duality and the $TT^*$-argument, to
$$
  \|T_H\|_{\ell^2(\Fp)\to\ell^2(\Fp)}
  \lesssim p^{-1/2}.
$$

Moreover, away from the vertical and horizontal lines on which one of the
four factors of $\widetilde K$ vanishes, the matrix entry $C_H(m,m')$
has the form
\begin{equation}
  C_H(m,m')
  =
  p^{-3}
  \sum_{\substack{y=(y_1,y_2,y_3,y_4)\in\Fp^4\\G(y)=0}}
  \psi\!\bigl(\mathcal H_{H,m,m'}(y)\bigr),
  \tag{2.1}\label{eq:HLY-pminus3}
\end{equation}
where
$$
  G(y)
  =
  \phi_1(y_1)-\phi_1(y_2)-\phi_1(y_3)+\phi_1(y_4),
$$
and
\begin{align*}
  \mathcal H_{H,m,m'}(y)
  ={}&
  H_1\bigl(\phi_1(y_2)-\phi_1(y_4)\bigr) 
  +
  m\bigl(\phi_2(y_1)-\phi_2(y_2)\bigr) \\
  &+
  m'\bigl(\phi_2(y_4)-\phi_2(y_3)\bigr) 
  +
  H_2\bigl(\phi_2(y_4)-\phi_2(y_2)\bigr).
\end{align*}
Changing the convention for complex conjugation replaces $H$ by
$-H$ and conjugates the resulting matrix.  Consequently, the relevant
operator norm estimates are unchanged.
\end{lemma}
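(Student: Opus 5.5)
The lemma makes two claims: the reduction of \cite[Lemma~4.3]{HLY} to an operator bound, and the explicit formula \eqref{eq:HLY-pminus3}. I would prove them in that order.

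\textbf{The reduction.} Write $J_3(g_1,g_2)$ on the Fourier side. Since $\widehat{g_1}(\xi_1,\xi_2)$ vanishes when $\xi_1=0$, and similarly for $g_2$ in the second frequency, the pairing $\langle J_3(g_1,g_2),g_3\rangle$ becomes a trilinear sum in which the multiplier $K(\xi_1,\eta_2)$ appears. The constraint $\eta_2\neq 0$ is exactly what introduces the factor $\mathbf 1_{b\neq0}$, so the multiplier is $\widetilde K$. In \cite[\S4]{HLY} the next steps are Cauchy--Schwarz in the free frequency followed by expanding the square. This produces the difference $\Delta_H\widetilde K$ at shift $H$. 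A second $TT^*$ step, summing over $n$ and pairing two copies indexed by $m,m'$, gives the quadratic form $\langle T_HF,F\rangle$ with $F$ built from $\widehat{g_i}$. Since $T_H$ is positive semidefinite (it is $SS^*$ for the matrix $S(m,n)=\Delta_H\widetilde K(n,m)$), we have $\|T_H\|=\|S\|^2$. So the $p^{-1/4}$ gain required in \cite[Lemma~4.3]{HLY} is equivalent to $\|T_H\|\lesssim p^{-1/2}$. I would track only the normalizations, because the HLY argument itself is quoted.

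\textbf{The explicit formula.} Expand each of the four $K$ factors as $\E_{y_i}$, with conjugates giving negative phases. Take $y_1$ for $K(n,m)$, $y_2$ for $\overline{K(n-H_1,m+H_2)}$, $y_3$ for $\overline{K(n,m')}$ (the conjugate of the third factor), and $y_4$ for $K(n-H_1,m'+H_2)$. The total phase is
$$n\bigl(\phi_1(y_1)-\phi_1(y_2)-\phi_1(y_3)+\phi_1(y_4)\bigr)+H_1\bigl(\phi_1(y_2)-\phi_1(y_4)\bigr)+m\bigl(\phi_2(y_1)-\phi_2(y_2)\bigr)+m'\bigl(\phi_2(y_4)-\phi_2(y_3)\bigr)+H_2\bigl(\phi_2(y_4)-\phi_2(y_2)\bigr).$$
Away from the lines where some indicator vanishes, namely $m=0$, $m=-H_2$, $m'=0$, $m'=-H_2$, the indicators equal $1$. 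Summing over $n\in\Fp$ by orthogonality gives $p\,\mathbf 1_{G(y)=0}$. Combined with the normalization $p^{-4}$ from the four expectations, this yields the prefactor $p^{-3}$ and exactly $\mathcal H_{H,m,m'}$.

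\textbf{Conjugation convention.} Conjugating the other pair of factors amounts to substituting $H\mapsto -H$ and complex conjugating the entries. Operator norms are invariant under entrywise conjugation. The bound is claimed for all nonzero $H$, and $-H$ is also nonzero, so the claimed estimate is unchanged.

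The main obstacle is the bookkeeping in the first step. One has to check that the HLY $TT^*$ chain produces precisely this $C_H$, with $\widetilde K$ rather than $K$ and these sign conventions. The explicit formula is a routine expansion.
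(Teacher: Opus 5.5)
Your proposal is correct and follows essentially the same route as the paper. Both use the duality/$TT^*$ reduction, the four-factor expansion with the same assignment of $y_1,\dots,y_4$ (giving the $p^{-4}\cdot p=p^{-3}$ normalization via orthogonality in $n$), and the four cutoff lines $m=0$, $m=-H_2$, $m'=0$, $m'=-H_2$; your observation that $C_H=SS^*$ with $S(m,n)=\Delta_H\widetilde K(n,m)$, hence $\|T_H\|=\|S\|^2$, makes the claimed equivalence with the $p^{-1/4}$ bilinear gain slightly more explicit than the paper's one-line assertion.
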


\begin{proof}
Consider the bilinear form arising in \cite[Lemma~4.3]{HLY}:
$$
  \mathcal B_H(F_1,F_2)
  =
  \sum_{n,m\in\Fp}
  F_1(n)F_2(m)\Delta_H\widetilde K(n,m).
$$
Viewing this as a bilinear form in $F_1$ and $F_2$, duality in the
$n$-variable and the usual $TT^*$-argument produce the correlation
matrix
$$
  C_H(m,m')
  =
  \sum_{n\in\Fp}
  \Delta_H\widetilde K(n,m)\,
  \overline{\Delta_H\widetilde K(n,m')}.
$$
Thus an operator bound
$$
  \|T_H\|_{\ell^2\to\ell^2}
  \lesssim p^{-1/2}
$$
implies
$$
  |\mathcal B_H(F_1,F_2)|
  \lesssim
  p^{-1/4}\|F_1\|_{\ell^2}\|F_2\|_{\ell^2}.
$$
Conversely, this bilinear estimate is precisely the form used in the Han--Lacey--Yang
proof of the fully non-degenerate estimate.

We next verify the normalization and the phase in
\eqref{eq:HLY-pminus3}.  By definition,
\begin{align*}
  \Delta_H\widetilde K(n,m)
  ={}&
  \widetilde K(n,m)
  \overline{\widetilde K(n-H_1,m+H_2)}.
\end{align*}
Ignoring for the moment the cut-offs in the second variables, expansion of
the two normalized kernels gives
\begin{align*}
  \Delta_H K(n,m)
  =
  p^{-2}
  \sum_{y_1,y_2\in\Fp}
  \psi\Bigl(
    n\phi_1(y_1)+m\phi_2(y_1) 
    -(n-H_1)\phi_1(y_2)
    -(m+H_2)\phi_2(y_2)
  \Bigr).
\end{align*}
Similarly,
\begin{align*}
  \overline{\Delta_H K(n,m')}
  =
  p^{-2}
  \sum_{y_3,y_4\in\Fp}
  \psi\Bigl(
    -n\phi_1(y_3)-m'\phi_2(y_3) 
    +(n-H_1)\phi_1(y_4)
    +(m'+H_2)\phi_2(y_4)
  \Bigr).
\end{align*}
Multiplying these expressions and summing over $n\in\Fp$, we obtain
$$
  p^{-4}
  \sum_{y_1,y_2,y_3,y_4\in\Fp}
  \sum_{n\in\Fp}
  \psi\!\left(
    nG(y)+\mathcal H_{H,m,m'}(y)
  \right),
$$
where
$$
  G(y)
  =
  \phi_1(y_1)-\phi_1(y_2)-\phi_1(y_3)+\phi_1(y_4).
$$
The orthogonality relation
$$
  \sum_{n\in\Fp}\psi(nt)
  =
  \begin{cases}
    p,&t=0,\\
    0,&t\neq0,
  \end{cases}
$$
therefore restricts the sum to the hypersurface $G(y)=0$.  Since the
four normalized kernels contribute $p^{-4}$, while the $n$-sum
contributes a factor $p$, the resulting normalization is $p^{-3}$.

Collecting the terms that are independent of $n$ gives
\begin{align*}
  \mathcal H_{H,m,m'}(y)
  ={}&
  H_1\bigl(\phi_1(y_2)-\phi_1(y_4)\bigr) 
  +
  m\bigl(\phi_2(y_1)-\phi_2(y_2)\bigr) \\
  &+
  m'\bigl(\phi_2(y_4)-\phi_2(y_3)\bigr) 
  +
  H_2\bigl(\phi_2(y_4)-\phi_2(y_2)\bigr),
\end{align*}
which proves \eqref{eq:HLY-pminus3}.

Finally, the cutoff $\widetilde K(a,b)=K(a,b)\mathbf 1_{b\neq0}$ forces
the four second-frequency variables $m,m+H_2,m',m'+H_2$ to be nonzero.
Hence the difference between the full correlation matrix
and the matrix represented by \eqref{eq:HLY-pminus3} is supported on the
four lines $m=0$, $m=-H_2$, $m'=0$, and $m'=-H_2$.
These are boundedly many vertical and horizontal lines in the
$(m,m')$-plane.
\end{proof}

{

We next record the exact decomposition of the corner-counting form.  Let
$$
  \Lambda(f)
  =
  \E_{\substack{x\in\Fp^2\\y\in\Fp}}
  f(x)
  f(x_1+\phi_1(y),x_2)
  f(x_1,x_2+\phi_2(y)).
$$
Define $r(x_2)=\E_{x_1}f(x_1,x_2)$ and
$c(x_1)=\E_{x_2}f(x_1,x_2)$, and let $u=f-r$ and $v=f-c$.
Thus $u$ has zero average in the first coordinate and $v$ has zero average
in the second coordinate.

\begin{lemma}\label{lem:HLY-decomposition}
With the notation above,
$$
  \Lambda(f)
  =
  M
  +
  \E_x f(x)J_{2,1}(u,c)(x)
  +
  \E_x f(x)J_{2,2}(r,v)(x)
  +
  \E_x f(x)J_3(u,v)(x),
$$
where
$$
  M=\E_{x_1,x_2}f(x_1,x_2)c(x_1)r(x_2), 
$$
and
  \begin{align*}
  J_{2,1}(u,c)(x)
  =c(x_1)\E_{y\in\Fp}u(x_1+\phi_1(y),x_2), \qquad
  J_{2,2}(r,v)(x)
  =r(x_2)\E_{y\in\Fp}v(x_1,x_2+\phi_2(y)).
\end{align*}
Moreover,
$$
  \|J_{2,1}(u,c)\|_2\lesssim p^{-1/4}\|u\|_4\|c\|_4,
  \qquad
  \|J_{2,2}(r,v)\|_2\lesssim p^{-1/4}\|r\|_4\|v\|_4.
$$
All implied constants depend only on $\deg\phi_1$ and $\deg\phi_2$.
\end{lemma}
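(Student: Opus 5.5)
The plan is to expand the two shifted factors using $f=r+u$ in the first shifted position and $f=c+v$ in the second, and then bound the two partially degenerate operators by a one-dimensional Fourier argument. First, write
$$
f(x_1+\phi_1(y),x_2)=r(x_2)+u(x_1+\phi_1(y),x_2),
$$
which holds because $r$ does not depend on $x_1$. Similarly, write
$$
f(x_1,x_2+\phi_2(y))=c(x_1)+v(x_1,x_2+\phi_2(y)).
$$
Multiplying these out inside $\Lambda(f)$ gives four terms:
$$
rc+u(\cdot)\,c+r\,v(\cdot)+u(\cdot)\,v(\cdot).
$$
\begin{itemize}
\item The product $r(x_2)c(x_1)$ does not depend on $y$, so it contributes exactly $M$.
\item The term $c(x_1)\E_y u(x_1+\phi_1(y),x_2)$ is $J_{2,1}(u,c)$.
\item The term $r(x_2)\E_y v(x_1,x_2+\phi_2(y))$ is $J_{2,2}(r,v)$.
\item The term $u\cdot v$ averaged over $y$ is $J_3(u,v)$ by definition.
\end{itemize}
Pairing each term with $f(x)$ and averaging over $x$ gives the identity. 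This part is purely algebraic.

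For the bound on $J_{2,1}$, I would fix $x_2$ and study the one-dimensional operator
$$
g\mapsto \E_y\, g(\cdot+\phi_1(y)).
$$
On $\Fp$ it is a Fourier multiplier with symbol
$$
m(\xi)=\E_y\psi(\xi\phi_1(y)).
$$
For $\xi\ne0$, Weil's bound gives $|m(\xi)|\le(\deg\phi_1-1)p^{-1/2}$. Since $u(\cdot,x_2)$ has mean zero, its Fourier transform vanishes at $\xi=0$. Hence Plancherel gives
$$
\|\E_y u(\cdot+\phi_1(y),x_2)\|_{L^2_{x_1}}\lesssim p^{-1/2}\|u(\cdot,x_2)\|_{L^2_{x_1}}.
$$

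To put $c(x_1)$ back in and reach the $L^4$ form, I would apply Cauchy--Schwarz over $x_1$ for each fixed $x_2$:
$$
\E_{x_1}|c(x_1)|^2|Au(x_1,x_2)|^2\le \|c\|_4^2\,\bigl(\E_{x_1}|Au(x_1,x_2)|^4\bigr)^{1/2}.
$$
The $L^4$ norm of the averaged function is then controlled by interpolating between two bounds. The first is the $L^2$ bound above, with gain $p^{-1/2}$. The second is the trivial bound
$$
\|Au\|_\infty\le\|u(\cdot,x_2)\|_\infty.
$$
This trivial bound loses, so a better route is to use $\|h\|_4^4\le\|h\|_2^2\|h\|_\infty^2$ for $h=Au$ together with $L^4\to L^4$ boundedness of averaging. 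Concretely, Hölder gives
$$
\|c\,Au\|_2\le\|c\|_4\|Au\|_4,
$$
and Riesz--Thorin interpolation between $L^2\to L^2$ with norm $p^{-1/2}$ and $L^\infty\to L^\infty$ with norm $2$ (on mean-zero functions) gives $L^4\to L^4$ norm $\lesssim p^{-1/4}$. Restricting to the mean-zero subspace is legitimate for interpolation after composing with the projection $g\mapsto g-\E g$. That projection is bounded on every $L^q$, so the interpolated operator is $A(I-P)$.

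Integrating the fixed-$x_2$ estimates over $x_2$ by Hölder then yields
$$
\|J_{2,1}(u,c)\|_2\lesssim p^{-1/4}\|u\|_4\|c\|_4.
$$
The estimate for $J_{2,2}$ follows symmetrically, with $\phi_2$ in place of $\phi_1$.

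The only subtle point is the interpolation step on the mean-zero subspace; writing the operator as $A(I-P)$ handles it. The implied constants come from Weil's bound and therefore depend only on the degrees of $\phi_1$ and $\phi_2$.
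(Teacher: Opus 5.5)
Your proof is correct. The identity is obtained exactly as in the paper, via bilinearity of the averaging operator together with the invariance of $r$ under translation in $x_1$ and of $c$ under translation in $x_2$.

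Where you differ is the pair of $L^2$ bounds. The paper does not prove them; it invokes \cite[Lemma~4.1]{HLY} for $(u,c)$ and, after interchanging coordinates, for $(r,v)$. You instead give a self-contained argument. Write $A_1g(x_1)=\E_y g(x_1+\phi_1(y))$ and let $P$ denote the mean. H\"older gives $\|J_{2,1}(u,c)\|_2\le\|c\|_4\|A_1u\|_{L^4(\Fp^2)}$. On each fibre, $A_1(I-P)$ has operator norm at most $(\deg\phi_1-1)p^{-1/2}$ on $L^2(\Fp)$ by Plancherel and Weil, and at most $2$ on $L^\infty(\Fp)$. Riesz--Thorin then gives $L^4$ operator norm at most $(2(\deg\phi_1-1))^{1/2}p^{-1/4}$. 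Finally $A_1u=A_1(I-P)u$ because each $u(\cdot,x_2)$ has mean zero.

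Your route buys independence from the citation and an explicit constant depending only on the degrees. It also gives the slightly stronger fibrewise fact that the averaging operator decays like $p^{-1/4}$ on mean-zero functions in $L^4$, with $c$ entering only through H\"older.

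Two small remarks. Weil's bound needs $p>\deg\phi_1$, which is harmless since $p>p_0(d)$. You should also delete the aside via $\|h\|_4^4\le\|h\|_2^2\|h\|_\infty^2$. It yields only $p^{-1/4}\|u\|_2^{1/2}\|u\|_\infty^{1/2}$, which is weaker than $p^{-1/4}\|u\|_4$. That would suffice for Proposition~\ref{prop:threshold} but not for the lemma as stated, and the interpolation step alone does the job.
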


\begin{proof}
Since $f=r+u=c+v$, bilinearity of the averaging operator gives
$A(f,f)=A(r,c)+A(u,c)+A(r,v)+A(u,v)$.
The function $r$ is unchanged by translation in the first coordinate,
while $c$ is unchanged by translation in the second coordinate.  Hence
$A(r,c)(x)=r(x_2)c(x_1)$,
$A(u,c)(x)=J_{2,1}(u,c)(x)$,
$A(r,v)(x)=J_{2,2}(r,v)(x)$, and $A(u,v)=J_3(u,v)$.
Multiplying this identity by $f(x)$ and averaging in $x$ proves the
decomposition.  Applying \cite[Lemma~4.1]{HLY} with
$(f_1,f_2)=(u,c)$ proves the first displayed estimate.  After interchanging
the two coordinates, the same lemma with $(f_1,f_2)=(r,v)$ proves the
second.
\end{proof}

}

\medskip

\section[A refined fibre-energy estimate for $J_3$]
{A refined fibre-energy estimate for
\texorpdfstring{$J_3$}{J3}}
\label{sec:refined-fibre-energy}

The quantitative improvement in the counting argument comes from retaining
a finer quantity in the Han--Lacey--Yang estimate for $J_3$.  In the proof
of \cite[Lemma~4.2]{HLY}, a fibre-square energy arises before it
is bounded by a global $L^4$-norm.  Instead of making that final
majorization, we keep the fibre energy itself.

For a function $g:\Fp^2\to\mathbb C$, define
$$
  \mathcal R(g)
  =
  \E_{x_2\in\Fp}
  \left(
    \E_{x_1\in\Fp}|g(x_1,x_2)|^2
  \right)^2
\qquad\text{and}\qquad
  \mathcal C(g)
  =
  \E_{x_1\in\Fp}
  \left(
    \E_{x_2\in\Fp}|g(x_1,x_2)|^2
  \right)^2.
$$
The quantities $\mathcal R(g)$ and $\mathcal C(g)$ measure,
respectively, the square energies of the $L^2$-masses of $g$ along
horizontal and vertical fibres.

\begin{lemma}\label{lem:refined-HLY}
Assume Kernel Estimate~\ref{est:HLY-kernel}.  Let
$g_1,g_2:\Fp^2\to\mathbb C$ satisfy
$$
  \E_{x_1\in\Fp}g_1(x_1,x_2)=0
  \quad\text{for every }x_2\in\Fp,
$$
and
$$
  \E_{x_2\in\Fp}g_2(x_1,x_2)=0
  \quad\text{for every }x_1\in\Fp.
$$
Then
$$
  \|J_3(g_1,g_2)\|_2^2
  \lesssim
  p^{-1}\|g_1\|_2^2\|g_2\|_2^2
  +
  p^{-1/4}
  \mathcal R(g_1)^{1/2}
  \mathcal C(g_2)^{1/2}.
$$
\end{lemma}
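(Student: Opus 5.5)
We prove the bound on the Fourier side. Decompose the square of $\|J_3(g_1,g_2)\|_2$ by the frequency difference $H$. Estimate the diagonal term $H=0$ by the Weil bound, and the off-diagonal terms by Kernel Estimate~\ref{est:HLY-kernel}, in the bilinear form given by Lemma~\ref{lem:HLY-matrix-reduction}. Summing over $H$ by Cauchy--Schwarz, the $\ell^2$-norms of the resulting one-variable coefficient functions should collapse, via Plancherel in the remaining coordinate, exactly to $\mathcal R(g_1)$ and $\mathcal C(g_2)$.

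\textbf{Fourier expansion.} Expanding $g_1,g_2$ in Fourier series gives
$$
J_3(g_1,g_2)(x)=\sum_{\xi,\eta\in\Fp^2}\widehat g_1(\xi)\widehat g_2(\eta)\,\psi(x\cdot(\xi+\eta))\,K(\xi_1,\eta_2),
$$
with $K$ as in Lemma~\ref{lem:HLY-matrix-reduction}. The mean-zero hypotheses give $\widehat g_1(\xi)=0$ when $\xi_1=0$ and $\widehat g_2(\eta)=0$ when $\eta_2=0$. Hence $K$ may be replaced by $\widetilde K$, and only pairs $(\xi_1,\eta_2)\neq0$ occur. By Plancherel,
$$
\|J_3\|_2^2=\sum_\zeta\Bigl|\sum_{\xi+\eta=\zeta}\widehat g_1(\xi)\widehat g_2(\eta)\widetilde K(\xi_1,\eta_2)\Bigr|^2 .
$$
Expanding the square and writing the second pair as $(\xi+H,\eta-H)$ splits this into a sum over $H\in\Fp^2$.

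\textbf{Diagonal term.} The term $H=0$ equals $\sum_{\xi,\eta}|\widehat g_1(\xi)|^2|\widehat g_2(\eta)|^2|\widetilde K(\xi_1,\eta_2)|^2$. The Weil bound gives $|K(a,b)|\lesssim p^{-1/2}$ for $(a,b)\neq0$ (using that $\phi_1,\phi_2$ are linearly independent, so $a\phi_1+b\phi_2$ is nonconstant). Plancherel then bounds this term by $p^{-1}\|g_1\|_2^2\|g_2\|_2^2$.

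\textbf{Off-diagonal terms.} For $H\neq0$ set $\xi=(n,\xi_2)$ and $\eta=(\eta_1,m)$. The kernel factor $\widetilde K(n,m)\overline{\widetilde K(n+H_1,m-H_2)}$ depends only on $(n,m)$, and up to the sign convention for $H$ noted at the end of Lemma~\ref{lem:HLY-matrix-reduction} it is $\Delta_H\widetilde K(n,m)$. Summing out the free variables defines
$$
a_H(n)=\sum_{\xi_2}\widehat g_1(n,\xi_2)\overline{\widehat g_1(n+H_1,\xi_2+H_2)},\qquad b_H(m)=\sum_{\eta_1}\widehat g_2(\eta_1,m)\overline{\widehat g_2(\eta_1-H_1,m-H_2)}.
$$
The $H$-term is then exactly $\mathcal B_H(a_H,b_H)$. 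Kernel Estimate~\ref{est:HLY-kernel}, through Lemma~\ref{lem:HLY-matrix-reduction}, bounds it by $p^{-1/4}\|a_H\|_{\ell^2}\|b_H\|_{\ell^2}$. Cauchy--Schwarz in $H$, where we may harmlessly add back $H=0$ as an upper bound, gives the off-diagonal total
$$
\le p^{-1/4}\Bigl(\sum_H\|a_H\|^2\Bigr)^{1/2}\Bigl(\sum_H\|b_H\|^2\Bigr)^{1/2}.
$$

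\textbf{Collapsing to fibre energies.} Let $G_1(n,x_2)=\E_{x_1}g_1(x_1,x_2)\psi(-nx_1)$ be the partial Fourier transform of $g_1$ in $x_1$. Then $a_H(n)=\E_{x_2}G_1(n,x_2)\overline{G_1(n+H_1,x_2)}\,\psi(\pm H_2x_2)$, so the sum over $H_2$ of $|a_H(n)|^2$ is, by Plancherel in $x_2$, equal to $\E_{x_2}|G_1(n,x_2)|^2|G_1(n+H_1,x_2)|^2$. Summing over $n$ and $H_1$, and applying Plancherel in $x_1$, gives
$$
\sum_H\|a_H\|^2=\E_{x_2}\Bigl(\sum_n|G_1(n,x_2)|^2\Bigr)^2=\E_{x_2}\bigl(\E_{x_1}|g_1|^2\bigr)^2=\mathcal R(g_1).
$$
The same computation with the roles of the coordinates swapped gives $\sum_H\|b_H\|^2=\mathcal C(g_2)$. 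Combining with the diagonal term yields the lemma.

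The main obstacle is bookkeeping rather than estimation. The kernel bound must be available for every $H\neq0$, including shifts with $H_1=0$ or $H_2=0$, in the exact bilinear form $\mathcal B_H$. In particular, the cutoff $\mathbf 1_{b\neq0}$ in $\widetilde K$ must match the support restriction coming from $\widehat g_2$, while the condition $\xi_1\neq0$ is supplied by $\widehat g_1$. The signs of $H$ and the placement of the conjugations must also agree with Lemma~\ref{lem:HLY-matrix-reduction}. I would verify these conventions first.
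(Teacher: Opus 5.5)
Your proposal is correct and follows the paper's proof essentially step for step: Fourier expansion, splitting $\|J_3\|_2^2$ by the frequency difference $H$, bounding $H=0$ by the Weil bound, and bounding $H\neq0$ by the Kernel Estimate in the bilinear form of Lemma~\ref{lem:HLY-matrix-reduction}. It then applies Cauchy--Schwarz in $H$ and collapses $\sum_H\|a_H\|^2$ and $\sum_H\|b_H\|^2$ to $\mathcal R(g_1)$ and $\mathcal C(g_2)$ via partial Fourier transforms and Plancherel. The only differences are cosmetic: you justify the diagonal term explicitly rather than citing Han--Lacey--Yang, and your off-diagonal kernel is $\Delta_{-H}\widetilde K$, which is harmless since the estimate is assumed for every nonzero shift.
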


\begin{proof}
We follow the proof of \cite[Lemma~4.2]{HLY}, stopping before the final
$L^4$-majorization.

{By Fourier expansion,}
$$
  J_3(g_1,g_2)(x)
  =
  \sum_{n,m\in\Fp^2}
  \widehat g_1(n-m)
  \widehat g_2(m)
  \widetilde K(n_1-m_1,m_2)
  \psi(n\cdot x).
$$
Here and below, sums over $n$, $m$, and $H$ are taken over
$\Fp^2$, unless otherwise indicated.

Applying Plancherel in $x$, expanding the resulting square, and grouping
pairs of $m$-frequencies according to their difference
{$H=(H_1,H_2)\in\Fp^2$, with the second $m$-frequency
written as $m+H$,}
one obtains the decomposition
$$
  \|J_3(g_1,g_2)\|_2^2
  =
  \sum_{H\in\Fp^2}I(H),
$$
where $I(H)$ denotes the contribution corresponding to the frequency
difference $H$.

The diagonal contribution $H=0$ is estimated exactly as in
\cite[Lemma~4.2]{HLY}:
$$
  I(0)
  \lesssim
  p^{-1}\|g_1\|_2^2\|g_2\|_2^2.
$$

{
For $H\neq0$, Kernel Estimate~\ref{est:HLY-kernel}, together with the
duality argument in Lemma~\ref{lem:HLY-matrix-reduction}, gives
$$
  |I(H)|
  \lesssim
  p^{-1/4}A_H(g_1)B_H(g_2),
$$
where
$$
  A_H(g)
  =
  \left(
    \sum_{n_1\in\Fp}
    \left|
      \sum_{n_2\in\Fp}
      \widehat g(n_1,n_2)
      \overline{
        \widehat g(n_1-H_1,n_2-H_2)
      }
    \right|^2
  \right)^{1/2},
$$
and
$$
  B_H(g)
  =
  \left(
    \sum_{m_2\in\Fp}
    \left|
      \sum_{m_1\in\Fp}
      \widehat g(m_1,m_2)
      \overline{
        \widehat g(m_1+H_1,m_2+H_2)
      }
    \right|^2
  \right)^{1/2}.
$$
Consequently, the triangle inequality and Cauchy--Schwarz in $H$ give
\begin{align*}
  \bigg|\sum_{H\neq0}I(H)\bigg|
  &\leq
  \sum_{H\neq0}|I(H)| 
  \lesssim
  p^{-1/4}
  \left(\sum_H A_H(g_1)^2\right)^{1/2}
  \left(\sum_H B_H(g_2)^2\right)^{1/2}.
\end{align*}
}
It therefore remains to identify the two sums on the right-hand side with
the fibre-square energies.

For $g:\Fp^2\to\mathbb C$, define its normalized partial Fourier
transform in the first coordinate by
$$
  G_{n_1}(x_2)
  =
  \E_{x_1\in\Fp}
  g(x_1,x_2)\psi(-n_1x_1).
$$
Then
$$
  \widehat g(n_1,n_2)
  =
  \E_{x_2\in\Fp}
  G_{n_1}(x_2)\psi(-n_2x_2).
$$

For $H=(H_1,H_2)$, we have
$$
  A_H(g)^2
  =
  \sum_{n_1\in\Fp}
  \left|
    \sum_{n_2\in\Fp}
    \widehat g(n_1,n_2)
    \overline{
      \widehat g(n_1-H_1,n_2-H_2)
    }
  \right|^2.
$$
Expanding the square gives
\begin{align*}
  A_H(g)^2
  =
  \sum_{n_1\in\Fp}
  \sum_{n_2,n_2'\in\Fp}
  &\widehat g(n_1,n_2)
  \overline{
    \widehat g(n_1-H_1,n_2-H_2)
  }  \ 
  \overline{\widehat g(n_1,n_2')}
  \widehat g(n_1-H_1,n_2'-H_2).
\end{align*}
Summing this identity over $H=(H_1,H_2)$ yields
\begin{align*}
  \sum_H A_H(g)^2
  =
  \sum_{H_1,H_2}
  \sum_{n_1}
  \sum_{n_2,n_2'}
  &\widehat g(n_1,n_2)\ 
  \overline{
    \widehat g(n_1-H_1,n_2-H_2)
  } \ 
  \overline{\widehat g(n_1,n_2')}\ 
  \widehat g(n_1-H_1,n_2'-H_2).
\end{align*}
Write $n_1'=n_1-H_1$.  As $H_1$ ranges over $\Fp$, the pair
$(n_1,n_1')$ ranges freely
over $\Fp^2$.  Hence
\begin{align*}
  \sum_H A_H(g)^2
  =
  \sum_{n_1,n_1'}
  \sum_{H_2}
  \sum_{n_2,n_2'}
  &\widehat g(n_1,n_2)\ 
  \overline{
    \widehat g(n_1',n_2-H_2)
  } \ 
  \overline{\widehat g(n_1,n_2')}
 \ \widehat g(n_1',n_2'-H_2).
\end{align*}

For fixed $n_1,n_1'$, the inner expression is the Fourier-side
correlation identity associated with the second coordinate.  By
Plancherel in that coordinate,
\begin{align*}
  &\sum_{H_2}
  \left|
    \sum_{n_2}
    \widehat g(n_1,n_2)
    \overline{
      \widehat g(n_1',n_2-H_2)
    }
  \right|^2 
  =
  \E_{x_2\in\Fp}
  |G_{n_1}(x_2)|^2
  |G_{n_1'}(x_2)|^2.
\end{align*}
Consequently,
\begin{align*}
  \sum_H A_H(g)^2
  &=
  \sum_{n_1,n_1'}
  \E_{x_2}
  |G_{n_1}(x_2)|^2
  |G_{n_1'}(x_2)|^2 
  =
  \E_{x_2}
  \bigg(
    \sum_{n_1}|G_{n_1}(x_2)|^2
  \bigg)^2.
\end{align*}
Finally, Plancherel in the first coordinate gives, for every fixed
$x_2$,
$$
  \sum_{n_1\in\Fp}|G_{n_1}(x_2)|^2
  =
  \E_{x_1\in\Fp}|g(x_1,x_2)|^2.
$$
Therefore,
\begin{align*}
  \sum_H A_H(g)^2
  &=
  \E_{x_2}
  \left(
    \E_{x_1}|g(x_1,x_2)|^2
  \right)^2 
  =
  \mathcal R(g).
\end{align*}

Applying the same calculation with the two coordinates interchanged
{and then replacing $H$ by $-H$} gives
$$
  \sum_H B_H(g)^2
  =
  \E_{x_1}
  \left(
    \E_{x_2}|g(x_1,x_2)|^2
  \right)^2
  =
  \mathcal C(g).
$$
It follows that
$$
 {\left|\sum_{H\neq0}I(H)\right|}
  \lesssim
  p^{-1/4}
  \mathcal R(g_1)^{1/2}
  \mathcal C(g_2)^{1/2}.
$$
Combining this estimate with the diagonal bound for $I(0)$ proves
$$
  \|J_3(g_1,g_2)\|_2^2
  \lesssim
  p^{-1}\|g_1\|_2^2\|g_2\|_2^2
  +
  p^{-1/4}
  \mathcal R(g_1)^{1/2}
  \mathcal C(g_2)^{1/2}.
$$
The proof is complete.
\end{proof}

\begin{remark}
Lemma~\ref{lem:refined-HLY} should not be interpreted as a global
$L^{8/3}\times L^{8/3}\longrightarrow L^2$ bilinear smoothing estimate.
It is the intermediate fibre-energy form of
the Han--Lacey--Yang $J_3$-estimate, obtained before the final
majorization of the fibre energies by global $L^4$-norms.
\end{remark}

\medskip
\section[The strengthened main term and the $p^{-1/14}$ threshold]
{The strengthened main term and the
\texorpdfstring{$p^{-1/14}$}{p-minus-one-fourteenth} threshold}
\label{sec:threshold}

The refined error estimate from
Lemma~\ref{lem:refined-HLY} will be combined with lower bounds for the
main term that are sensitive to the distribution of $A$ along rows and
columns.

Let $A\subset\Fp^2$, let $f=\mathbf 1_A$ and
$\delta=\E_{x\in\Fp^2}f(x)=|A|/p^2$, and define the normalized row and
column densities
$$
  r(x_2)
  =
  \E_{x_1\in\Fp}f(x_1,x_2),
  \qquad
  c(x_1)
  =
  \E_{x_2\in\Fp}f(x_1,x_2).
$$
Thus $r(x_2)$ is the density of $A$ in the row indexed by $x_2$,
while $c(x_1)$ is the density of $A$ in the column indexed by $x_1$.

The main term in the Han--Lacey--Yang decomposition is
$$
  M
  =
  \E_{x_1,x_2}
  f(x_1,x_2)c(x_1)r(x_2).
$$
We shall use two lower bounds for $M$.  The first is the standard
path-counting inequality for a bipartite graph.  The second is sensitive
to the square energies of the row and column degrees and is the key to
the $p^{-1/14}$ absorption argument.

\begin{lemma}\label{lem:path-main}
For $f=\mathbf 1_A$, one has
$$
  M\geq\delta^3.
$$
Equivalently, if
$$
  d(x_1)
  =
  \bigl|\{x_2\in\Fp:(x_1,x_2)\in A\}\bigr|,\qquad
  e(x_2)
  =
  \bigl|\{x_1\in\Fp:(x_1,x_2)\in A\}\bigr|,
$$
and $N=|A|$, then
$$
  \sum_{(x_1,x_2)\in A}d(x_1)e(x_2)
  \geq
  \frac{N^3}{p^2}.
$$
\end{lemma}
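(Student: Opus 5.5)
We view $A$ as the edge set of a bipartite graph between the column set $X_1=\Fp$ and the row set $X_2=\Fp$, with $(x_1,x_2)\in A$ an edge. Then $d(x_1)$ and $e(x_2)$ are the vertex degrees. The sum $\sum_{(x_1,x_2)\in A}d(x_1)e(x_2)$ counts paths of length three. First we check the normalization: $c(x_1)=d(x_1)/p$, $r(x_2)=e(x_2)/p$ and $\delta=N/p^2$, so
$$
M=p^{-4}\sum_{(x_1,x_2)\in A}d(x_1)e(x_2).
$$
Hence $M\ge\delta^3$ is equivalent to $\sum_A d e\ge N^3/p^2$, and it suffices to prove the combinatorial form.

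The key step is to symmetrize. For an edge $(x_1,x_2)$, AM--GM gives
$$
d(x_1)e(x_2)\ge\Bigl(\min\bigl(d(x_1),e(x_2)\bigr)\Bigr)^2 .
$$
We do not use this bound directly. Instead we use the harmonic-mean form: since $1/d+1/e\ge 2/\sqrt{de}$, the arithmetic--harmonic inequality gives
$$
\sqrt{d(x_1)e(x_2)}\;\ge\;\frac{2}{1/d(x_1)+1/e(x_2)} .
$$
Summing $1/d(x_1)$ over edges gives exactly $\#\{x_1:d(x_1)>0\}\le p$, and similarly for $1/e$. Therefore
$$
\sum_{A}\Bigl(\frac1{d(x_1)}+\frac1{e(x_2)}\Bigr)\le 2p .
$$
By Cauchy--Schwarz (equivalently Jensen for the convex map $t\mapsto 1/t$),
$$
\sum_A \sqrt{de}\;\ge\;\sum_A\frac{2}{1/d+1/e}\;\ge\;\frac{2N^2}{\sum_A(1/d+1/e)}\;\ge\;\frac{N^2}{p}.
$$
Finally, Cauchy--Schwarz (power mean) gives
$$
\sum_A de\ge \frac{\bigl(\sum_A\sqrt{de}\bigr)^2}{N}\ge \frac{N^4/p^2}{N}=\frac{N^3}{p^2},
$$
which is the claim.

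The only delicate point is the choice of intermediate quantity. Applying convexity directly to $\sum_A de$ through $\sum_A d=\sum_{x_1}d^2\ge N^2/p$ controls $d$ and $e$ separately but not their product along edges. Passing through $\sqrt{de}$ and the harmonic mean avoids this, because summing $1/d$ over edges telescopes exactly to a vertex count. I would verify the chain of inequalities carefully, including the harmless restriction to vertices of positive degree (every endpoint of an edge has positive degree). No earlier result of the paper is needed.
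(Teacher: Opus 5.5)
Your proof is correct and is essentially the paper's argument. Both rest on the edge identity $\sum_{(x_1,x_2)\in A}1/d(x_1)=\#\{x_1:d(x_1)>0\}\le p$ and its analogue for $e$. Your two Cauchy--Schwarz steps through $\sum_A\sqrt{de}$ combine into exactly the paper's H\"older step $N\le\bigl(\sum_A de\bigr)^{1/3}\bigl(\sum_A (de)^{-1/2}\bigr)^{2/3}$. The only difference is cosmetic: you bound the reciprocal term via the arithmetic--harmonic (AM--GM) inequality, whereas the paper uses Cauchy--Schwarz to obtain $\sum_A (de)^{-1/2}\le p$.
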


\begin{proof}
Write
$$
  S
  =
  \sum_{(x_1,x_2)\in A}d(x_1)e(x_2).
$$
This sum counts homomorphic copies of the three-edge path in the
bipartite graph $A\subset\Fp\times\Fp$, with the middle edge
distinguished.

Since $d(x_1)\geq1$ and $e(x_2)\geq1$ whenever
$(x_1,x_2)\in A$, weighted H\"older gives
\begin{align*}
  N
  &=
  \sum_{(x_1,x_2)\in A}
  \bigl(d(x_1)e(x_2)\bigr)^{1/3}
  \bigl(d(x_1)e(x_2)\bigr)^{-1/3}
  \\
  &\leq
  \bigg(
    \sum_{(x_1,x_2)\in A}d(x_1)e(x_2)
  \bigg)^{1/3}
  \bigg(
    \sum_{(x_1,x_2)\in A}
    \frac{1}{\sqrt{d(x_1)e(x_2)}}
  \bigg)^{2/3}.
\end{align*}
By Cauchy--Schwarz,
\begin{align*}
  \sum_{(x_1,x_2)\in A}
  \frac{1}{\sqrt{d(x_1)e(x_2)}}
  &\leq
  \bigg(
    \sum_{(x_1,x_2)\in A}\frac1{d(x_1)}
  \bigg)^{1/2}
  \bigg(
    \sum_{(x_1,x_2)\in A}\frac1{e(x_2)}
  \bigg)^{1/2}.
\end{align*}
For every $x_1$ with $d(x_1)>0$,
$$
  \sum_{x_2:(x_1,x_2)\in A}\frac1{d(x_1)}=1.
$$
Therefore,
$$
  \sum_{(x_1,x_2)\in A}\frac1{d(x_1)}
  =
  \bigl|\{x_1:d(x_1)>0\}\bigr|
  \leq p.
$$
Similarly,
$$
  \sum_{(x_1,x_2)\in A}\frac1{e(x_2)}
  \leq p.
$$
Hence
$$
  \sum_{(x_1,x_2)\in A}
  \frac{1}{\sqrt{d(x_1)e(x_2)}}
  \leq p.
$$
It follows that
$\displaystyle 
  N\leq S^{1/3}p^{2/3},
$
and therefore
$\displaystyle 
  S\geq\frac{N^3}{p^2}.
$

Finally,
$$
  c(x_1)=\frac{d(x_1)}{p},
  \qquad
  r(x_2)=\frac{e(x_2)}{p},
$$
so
\begin{align*}
  M
  &=
  \frac1{p^2}
  \sum_{x_1,x_2}
  f(x_1,x_2)c(x_1)r(x_2)
=
  p^{-4}
  \sum_{(x_1,x_2)\in A}d(x_1)e(x_2)
  =
  p^{-4}S.
\end{align*}
Since $\delta=N/p^2$, the inequality $S\geq N^3/p^2$ gives
$$
  M\geq p^{-4}\frac{N^3}{p^2}
  =
  \left(\frac{N}{p^2}\right)^3
  =
  \delta^3.
$$
The proof is complete.
\end{proof}

\begin{lemma}\label{lem:bipartite}
Let
$
  R_2=\E_{x_2}r(x_2)^2,
  \ 
  C_2=\E_{x_1}c(x_1)^2.
$
Then
$
  M\geq R_2C_2.
$
\end{lemma}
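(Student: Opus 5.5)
The plan is to translate the inequality into counting language, as in the proof of Lemma~\ref{lem:path-main}, and then prove two one-sided bounds by Cauchy--Schwarz and take their geometric mean. With $d(x_1)$, $e(x_2)$, $N$ and $S=\sum_{(x_1,x_2)\in A}d(x_1)e(x_2)$ as in Lemma~\ref{lem:path-main}, we have $M=p^{-4}S$. Also $R_2=p^{-3}\sum_{x_2}e(x_2)^2$ and $C_2=p^{-3}\sum_{x_1}d(x_1)^2$. So the claim $M\geq R_2C_2$ is equivalent to
$$
  S\;\geq\;\frac{1}{p^2}\Big(\sum_{x_1}d(x_1)^2\Big)\Big(\sum_{x_2}e(x_2)^2\Big).
$$
It therefore suffices to prove $S\geq p^{-2}\big(\sum_{x_1}d(x_1)^2\big)^2$ and the symmetric bound $S\geq p^{-2}\big(\sum_{x_2}e(x_2)^2\big)^2$. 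Multiplying these and taking square roots gives the claim.

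For the first bound, group the sum by rows. Set
$$
  T(x_2)=\sum_{x_1:(x_1,x_2)\in A}d(x_1),
$$
the total column degree along the row $x_2$. Then $\sum_{x_2}T(x_2)=\sum_{(x_1,x_2)\in A}d(x_1)=\sum_{x_1}d(x_1)^2$ and $S=\sum_{x_2}e(x_2)T(x_2)$. Restrict to rows with $e(x_2)>0$; rows with $e(x_2)=0$ have $T(x_2)=0$ and contribute nothing. Cauchy--Schwarz with weights $e(x_2)$ gives
$$
  \Big(\sum_{x_2}T(x_2)\Big)^2\leq\Big(\sum_{x_2}e(x_2)T(x_2)\Big)\Big(\sum_{x_2:\,e(x_2)>0}\frac{T(x_2)}{e(x_2)}\Big).
$$
The ratio $T(x_2)/e(x_2)$ is the average of $d(x_1)$ over the $e(x_2)$ points of the row, hence at most $\max d\leq p$. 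Since there are at most $p$ rows, the last factor is at most $p^2$. This yields $S\geq p^{-2}\big(\sum_{x_1}d(x_1)^2\big)^2$. Interchanging the roles of the coordinates, i.e.\ grouping by columns and using $e\leq p$, gives the symmetric bound.

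I expect the only real obstacle to be choosing which quantity to apply Cauchy--Schwarz to. The naive Chebyshev-type bound $S\geq N^{-1}\sum_A d\sum_A e$ would need a positive correlation between $d$ and $e$ on $A$, which can fail. The row-grouping trick avoids this by bounding the averaged degrees trivially by $p$. This trivial bound is exactly where the factor $p^{-2}$ in the normalization comes from, so no loss occurs relative to the stated inequality. As a sanity check, the example of one full row plus one full column gives $S\approx3p^2$ against a right-hand side of $\approx p^2$, consistent with the bound.
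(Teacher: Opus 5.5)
Your proof is correct and is essentially the paper's argument. Both reduce the claim to $\bigl(\sum_{x_1}d(x_1)^2\bigr)\bigl(\sum_{x_2}e(x_2)^2\bigr)\le p^2S$ and get each factor $\le pS^{1/2}$ from Cauchy--Schwarz with the splitting $\sqrt{de}\cdot\sqrt{d/e}$ and the trivial bound $d,e\le p$. Grouping the edge sum by rows, as you do, yields literally the same inequality the paper writes over edges of $A$.
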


\begin{proof}
Retain the degree notation
$$
  d(x_1)
  =
  \bigl|\{x_2:(x_1,x_2)\in A\}\bigr|,
  \qquad
  e(x_2)
  =
  \bigl|\{x_1:(x_1,x_2)\in A\}\bigr|.
$$
Let
$$
  S
  =
  \sum_{(x_1,x_2)\in A}d(x_1)e(x_2),
$$
and
$$
  D_2=\sum_{x_1\in\Fp}d(x_1)^2,
  \qquad
  E_2=\sum_{x_2\in\Fp}e(x_2)^2.
$$
Since
$$
  M=p^{-4}S,
  \qquad
  C_2=p^{-3}D_2,
  \qquad
  R_2=p^{-3}E_2,
$$
the desired inequality $M\geq R_2C_2$ is equivalent to
$$
  p^2S\geq D_2E_2.
$$

We first estimate $D_2$.  Since each $d(x_1)$ occurs once for every
edge adjacent to $x_1$,
$$
  D_2
  =
  \sum_{(x_1,x_2)\in A}d(x_1).
$$
Writing
$$
  d(x_1)
  =
  \sqrt{d(x_1)e(x_2)}
  \sqrt{\frac{d(x_1)}{e(x_2)}}
$$
and applying Cauchy--Schwarz gives
$$
  D_2
  \leq
  S^{1/2}
  \bigg(
    \sum_{(x_1,x_2)\in A}
    \frac{d(x_1)}{e(x_2)}
  \bigg)^{1/2}.
$$
For every $x_2$ with $e(x_2)>0$,
$$
  \sum_{x_1:(x_1,x_2)\in A}d(x_1)
  \leq
  p\,e(x_2),
$$
because $d(x_1)\leq p$.  Therefore,
\begin{align*}
  \sum_{(x_1,x_2)\in A}
  \frac{d(x_1)}{e(x_2)}
  &=
  \sum_{x_2:e(x_2)>0}
  \frac1{e(x_2)}
  \sum_{x_1:(x_1,x_2)\in A}d(x_1)
  \leq
  \sum_{x_2:e(x_2)>0}p
  \leq p^2.
\end{align*}
Hence
$$
  D_2\leq pS^{1/2}.
$$
By the symmetric argument,
$$
  E_2\leq pS^{1/2}.
$$
Multiplying the two estimates yields
$$
  D_2E_2\leq p^2S,
$$
which is equivalent to $M\geq R_2C_2$.
\end{proof}

\begin{proposition}\label{prop:threshold}
Assume Kernel Estimate~\ref{est:HLY-kernel}.  Then every
$A\subset\Fp^2$ with density $\delta\gg p^{-1/14}$ contains a nontrivial
polynomial corner.
\end{proposition}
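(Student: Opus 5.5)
The approach is to start from the decomposition in Lemma~\ref{lem:HLY-decomposition}, $\Lambda(f)=M+E_{2,1}+E_{2,2}+E_3$, and show that each error term is at most $M/4$ once $\delta\ge Cp^{-1/14}$. Then $\Lambda(f)\ge M/4\ge\delta^3/4$. The trivial contribution from $y=0$ is $\E_x f(x)^3/p=\delta/p$. Since $\delta^3\gg\delta/p$ in this range, some $y\neq0$ survives, which is the required nontrivial corner.

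The partially degenerate terms are handled first. By Cauchy--Schwarz and Lemma~\ref{lem:HLY-decomposition},
$$
|E_{2,1}|\le\|f\|_2\|J_{2,1}(u,c)\|_2\lesssim p^{-1/4}\delta^{1/2}\|u\|_4\|c\|_4 .
$$
Here $|u|\le1$ gives $\|u\|_4\le\|u\|_2^{1/2}\le\delta^{1/4}$, and $c\le1$ gives $\|c\|_4\le\delta^{1/4}$. So $|E_{2,1}|\lesssim p^{-1/4}\delta$, and this is $\le\delta^3/8$ as soon as $\delta\gg p^{-1/8}$, which holds since $p^{-1/14}\ge p^{-1/8}$. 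The term $E_{2,2}$ is symmetric.

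For $E_3$, apply Lemma~\ref{lem:refined-HLY} with $g_1=u$, $g_2=v$; this is legitimate because we are assuming Kernel Estimate~\ref{est:HLY-kernel}. It gives
$$
|E_3|\le\delta^{1/2}\|J_3(u,v)\|_2\lesssim p^{-1/2}\delta^{1/2}\|u\|_2\|v\|_2+p^{-1/8}\delta^{1/2}\mathcal R(u)^{1/4}\mathcal C(v)^{1/4}.
$$
The next step is to compute the fibre energies. On the row $x_2$, $f$ is an indicator with mean $r$, so $\E_{x_1}|u|^2=r(1-r)$. Hence $\mathcal R(u)=V_R\le R_2$ and likewise $\mathcal C(v)=V_C\le C_2$. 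Also $\|u\|_2^2,\|v\|_2^2\le\delta$. The first term is therefore $\lesssim p^{-1/2}\delta^{3/2}\le\delta^3/8$ once $\delta\gg p^{-1/3}$.

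For the second term, set $V=V_RV_C$. Lemmas~\ref{lem:path-main} and~\ref{lem:bipartite} give $M\ge\max(\delta^3,R_2C_2)\ge\max(\delta^3,V)$. Then
$$
V^{1/4}\le M^{1/4}=M\cdot M^{-3/4}\le M\delta^{-9/4},
$$
so the term is at most $p^{-1/8}\delta^{-7/4}M$. This is $\le M/8$ precisely when $\delta\ge Cp^{-1/14}$, since $(1/8)/(7/4)=1/14$.

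This absorption is the step that fixes the exponent, and it is the only delicate point. The key is to compare this error with $M$ rather than with $\delta^3$, using the fact that $V\le M$ and $\delta^3\le M$ hold simultaneously. The remaining checks are routine: implied constants depend only on $d$, and the bound $\Lambda\ge M/2$ must dominate the $y=0$ contribution $\delta/p$.
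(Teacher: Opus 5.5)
Your proposal is correct and follows essentially the same route as the paper: you use the decomposition of Lemma~\ref{lem:HLY-decomposition}, bound the partially degenerate terms by $p^{-1/4}\delta$, apply Lemma~\ref{lem:refined-HLY} with $\mathcal R(u)=V_R$ and $\mathcal C(v)=V_C$, absorb the fibre-energy error using $M\ge\max(\delta^3,V)$, and discard the $y=0$ contribution $\delta/p$. Your one-line absorption $V^{1/4}\le M^{1/4}\le M\delta^{-9/4}$ is a compact form of the paper's case split $\delta^{1/2}V^{1/4}\le\delta^{-7/4}\max(\delta^3,V)$; the switching between $M/4$, $M/8$ and $M/2$ in your bookkeeping is inconsistent but harmless.
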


\begin{proof}
Let $f=\mathbf 1_A$, and define
$g_1(x_1,x_2)=f(x_1,x_2)-r(x_2)$ and
$g_2(x_1,x_2)=f(x_1,x_2)-c(x_1)$.
Then $\E_{x_1}g_1(x_1,x_2)=0$ for every $x_2$, and
$\E_{x_2}g_2(x_1,x_2)=0$ for every $x_1$.
Thus $g_1$ and $g_2$ satisfy the cancellation assumptions of
Lemma~\ref{lem:refined-HLY}.

Since $f^2=f$, we have, for every $x_2$,
\begin{align*}
  \E_{x_1}|g_1(x_1,x_2)|^2
  &=
  \E_{x_1}|f(x_1,x_2)-r(x_2)|^2
  =
  r(x_2)-r(x_2)^2
  =
  r(x_2)(1-r(x_2)).
\end{align*}
Similarly,
$\E_{x_2}|g_2(x_1,x_2)|^2=c(x_1)(1-c(x_1))$.
Define
$$
  V_R=\E_{x_2}\bigl(r(x_2)(1-r(x_2))\bigr)^2
  \qquad \text{and}\qquad
  V_C=\E_{x_1}\bigl(c(x_1)(1-c(x_1))\bigr)^2,
$$
and $ V=V_RV_C$.
By construction, $\mathcal R(g_1)=V_R$ and $\mathcal C(g_2)=V_C$.

Moreover,
\begin{align*}
  \|g_1\|_2^2
  &=
  \E_{x_2}r(x_2)(1-r(x_2))
  \leq
  \E_{x_2}r(x_2)
  =
  \delta,
\end{align*}
and similarly $\|g_2\|_2^2\leq\delta$.
Lemma~\ref{lem:refined-HLY} therefore gives
$$
  \|J_3(g_1,g_2)\|_2^2
  \lesssim
  p^{-1}\delta^2
  +
  p^{-1/4}V^{1/2}.
$$
Using $\sqrt{a+b}\leq\sqrt a+\sqrt b$, we obtain
$$
  \|J_3(g_1,g_2)\|_2
  \lesssim
  p^{-1/2}\delta
  +
  p^{-1/8}V^{1/4}.
$$
{
Since $f$, $g_1$, and $g_2$ are real-valued, so is $J_3(g_1,g_2)$.
Cauchy--Schwarz and $\|f\|_2=\delta^{1/2}$ therefore give
\begin{align*}
  \left|\E_x f(x)J_3(g_1,g_2)(x)\right|
  &\leq
  \|f\|_2\|J_3(g_1,g_2)\|_2
  \lesssim
  p^{-1/2}\delta^{3/2}
  +
  p^{-1/8}\delta^{1/2}V^{1/4}.
\end{align*}
}

We now compare these errors with the main term.  By
Lemmas~\ref{lem:path-main} and \ref{lem:bipartite},
$$
  M\geq\delta^3,
  \qquad
  M\geq R_2C_2.
$$
Since $0\leq r,c\leq1$,
$$
  \bigl(r(1-r)\bigr)^2\leq r^2
\qquad\text{and}\qquad
  \bigl(c(1-c)\bigr)^2\leq c^2.
$$
Consequently,
$$
  V_R\leq R_2,
  \qquad
  V_C\leq C_2,
$$
and hence
$$
  V=V_RV_C\leq R_2C_2\leq M.
$$
Thus
$$
  M\geq\max(\delta^3,V).
$$

{
Fix $C_0\geq1$ and assume that $\delta\geq C_0p^{-1/14}$.
The first fully non-degenerate error then satisfies
$$
  p^{-1/2}\delta^{3/2}
  \leq
  C_0^{-3/2}p^{-11/28}\delta^3.
$$

For the fibre-energy error, the two cases $V\leq\delta^3$ and
$V>\delta^3$ are covered by the single inequality
$$
  \delta^{1/2}V^{1/4}
  \leq
  \delta^{-7/4}\max(\delta^3,V).
$$
Since $M\geq\max(\delta^3,V)$, it follows that
$$
  p^{-1/8}\delta^{1/2}V^{1/4}
  \leq
  p^{-1/8}\delta^{-7/4}M
  \leq
  C_0^{-7/4}M.
$$
We first choose $C_0$, depending only on the degrees, so that this last
quantity is a sufficiently small multiple of $M$.

It remains to estimate the two partially degenerate terms in
Lemma~\ref{lem:HLY-decomposition}.  Since $0\leq r,c\leq1$ and
$u=f-r$, $v=f-c$, we have
$$
  \|r\|_4+\|c\|_4+\|u\|_4+\|v\|_4
  \lesssim
  \delta^{1/4}.
$$
The estimates in Lemma~\ref{lem:HLY-decomposition}, followed by
Cauchy--Schwarz, therefore give
$$
  \left|\E_x f(x)J_{2,1}(u,c)(x)\right|
  +
  \left|\E_x f(x)J_{2,2}(r,v)(x)\right|
  \lesssim
  p^{-1/4}\delta.
$$
Under the stated density assumption,
$$
  p^{-1/4}\delta
  \leq
  C_0^{-2}p^{-3/28}\delta^3.
$$
Consequently, after fixing $C_0$ as above and then enlarging the finite
exceptional collection of primes if necessary, the exact decomposition in
Lemma~\ref{lem:HLY-decomposition} gives
$$
  |\Lambda(f)-M|
  \leq
  \frac14 M.
$$
In particular,
$$
  \Lambda(f)\geq\frac34M\geq\frac34\delta^3.
$$
}

It remains to remove the forbidden parameter value $y=0$.  We also show
that the finitely many nonzero common roots of $\phi_1$ and $\phi_2$ may
be discarded.  The contribution of $y=0$ to the normalized counting form
is $p^{-1}\E_x f(x)^3=\delta/p$.  The locus
$Z=\{y\in\Fp^\times:\phi_1(y)=\phi_2(y)=0\}$ has cardinality
$O_{\deg}(1)$, since both polynomials are nonzero and have bounded degree.
The total contribution of these additional completely diagonal parameter
values is therefore $O_{\deg}(\delta/p)$.
{
Under $\delta\geq C_0p^{-1/14}$,
$$
  \frac{\delta}{p}
  \leq
  C_0^{-2}p^{-6/7}\delta^3.
$$
Thus, after a further enlargement of the finite exceptional collection of
primes, the total contribution of these boundedly many parameter values is
at most $M/4$.
}

Strictly speaking, the definition of ``nontrivial'' in
Theorem~\ref{thm:main} only requires $y\neq0$, so the nonzero common
zeros of $\phi_1$ and $\phi_2$ need not be removed.  Removing them
proves the slightly stronger conclusion that the parameter $y$ may be
chosen so that $y\neq0$ and the two polynomial increments do not both
vanish.

After discarding these $O_{\deg}(1)$ parameter values, the remaining
corner count is positive.  Hence $A$ contains a nontrivial polynomial
corner.
\end{proof}

\begin{remark}
The comparison of the error terms with the main term in the proof of
Proposition~\ref{prop:threshold} is the only point at which the improved
density threshold enters.  The two lower bounds for the main term separate
the argument into two regimes.

If the fibre-square energy is small, namely $V\leq\delta^3$, then the
elementary lower bound
$$
  M\geq\delta^3
$$
absorbs the fully non-degenerate error.  If the fibre-square energy is
large, namely $V>\delta^3$, then the energy-sensitive lower bound
$$
  M\geq V
$$
absorbs the factor $V^{1/4}$ in the error term.

Thus the essential algebraic inequality is
$$
  p^{-1/8}\delta^{1/2}V^{1/4}
  \lesssim
  \max(\delta^3,V).
$$
{More precisely, if $\delta\geq C_0p^{-1/14}$, the ratio
of the left-hand side to $M$ is at most $C_0^{-7/4}$.  Fixing $C_0$
sufficiently large gives the required absorption at the endpoint.}
\end{remark}

\medskip

\section{Admissible exceptional sets and the matrix estimate}
\label{sec:admissible}

In the Han--Lacey--Yang argument, the operator estimate for the correlation
matrix is obtained from pointwise bounds outside a generalized diagonal.
In the equal-degree resonant case, it is more natural to work with a
slightly larger exceptional set consisting of boundedly many vertical and
horizontal lines together with boundedly many algebraic curves of bounded
degree.

The purpose of this section is twofold.  First, we show that such an
enlargement of the exceptional set does not affect the required
$\ell^2$-operator estimate.  Second, we formulate the precise exponential
sum estimates that remain to be proved in the resonant case.

Recall that if $C(m,m')$ is a kernel on $\Fp\times\Fp$, then its
associated operator is
$$
  (T_CF)(m)
  =
  \sum_{m'\in\Fp}C(m,m')F(m').
$$
We shall repeatedly use Schur's test in the following elementary form:
if
$$
  \sup_{m\in\Fp}\sum_{m'\in\Fp}|C(m,m')|\leq A
\qquad\text{and}\qquad
  \sup_{m'\in\Fp}\sum_{m\in\Fp}|C(m,m')|\leq B,
$$
then
$$
  \|T_C\|_{\ell^2(\Fp)\to\ell^2(\Fp)}
  \leq (AB)^{1/2}.
$$

\begin{definition}
\label{def:admissible}
A subset $E\subset\Fp\times\Fp$ is called \emph{admissible} if it is
contained in a union of the following form:
\begin{enumerate}[label=\rm(\roman*),leftmargin=2.2em]
\item $O(1)$ vertical or horizontal lines;
\item $O(1)$ algebraic curves of degree $O(1)$, none of which has a
vertical or horizontal irreducible component.
\end{enumerate}
All implicit constants are allowed to depend only on
$\deg\phi_1$ and $\deg\phi_2$.
\end{definition}

\begin{lemma}
\label{lem:admissible-matrix}
Let $C(m,m')$ be a kernel on $\Fp\times\Fp$.  Suppose that there is an
admissible set $E\subset\Fp^2$ such that
$$
  |C(m,m')|
  \lesssim p^{-3/2}
  \qquad
  \bigl((m,m')\notin E\bigr),
$$
and
$$
  |C(m,m')|
  \lesssim p^{-1}
  \qquad
  \bigl((m,m')\in E\bigr).
$$
Then
$$
  \|T_CF\|_{\ell^2(\Fp)}
  \lesssim
  p^{-1/2}\|F\|_{\ell^2(\Fp)}.
$$
Equivalently,
$$
  \|T_C\|_{\ell^2(\Fp)\to\ell^2(\Fp)}
  \lesssim p^{-1/2}.
$$
\end{lemma}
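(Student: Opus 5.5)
The plan is to split the kernel according to the structure of the admissible set and treat each piece separately. Write $C=C\mathbf 1_{E^c}+C\mathbf 1_{L}+C\mathbf 1_{\Gamma}$, where $L$ is the union of the $O(1)$ vertical and horizontal lines in Definition~\ref{def:admissible}(i), and $\Gamma=E\setminus L$ lies in the union of the $O(1)$ curves of Definition~\ref{def:admissible}(ii). By the triangle inequality for operator norms, it suffices to bound each of the three operators by $O(p^{-1/2})$.

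For the generic piece, $|C\mathbf 1_{E^c}|\lesssim p^{-3/2}$ pointwise. Row and column sums are therefore at most $p\cdot p^{-3/2}=p^{-1/2}$, and Schur's test gives the bound $p^{-1/2}$. For the curve piece, I would use the fact that a plane curve of degree $k$ with no vertical irreducible component meets each vertical line $\{m'=c\}$ in at most $k$ points. Indeed, if $Q(m,m')$ is the defining polynomial, then $Q(m,c)$ is not identically zero in $m$: otherwise $m'-c$ would divide $Q$, producing a vertical component. The same holds for horizontal lines. So each row and each column of $\Gamma$ contains $O(1)$ points, the row and column sums of $|C\mathbf 1_\Gamma|$ are $O(p^{-1})$, and Schur's test gives a bound $O(p^{-1})$, which is better than needed.

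The line piece is the delicate step, because a full line carries $p$ entries of size $p^{-1}$. A row sum along a horizontal line is then $O(1)$, so Schur's test alone fails. Here I would argue in $\ell^2$ directly instead. If $L_0=\{m=m_0\}$ is a single horizontal line (a fixed row), the operator $C\mathbf 1_{L_0}$ has rank one, and its norm equals the $\ell^2$ norm of that row:
$$
\Bigl(\sum_{m'}|C(m_0,m')|^2\Bigr)^{1/2}\lesssim (p\cdot p^{-2})^{1/2}=p^{-1/2}.
$$
Vertical lines (fixed columns) are handled the same way, via the column norm. Summing over the $O(1)$ lines gives $\|T_{C\mathbf 1_L}\|\lesssim p^{-1/2}$.

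This rank-one $\ell^2$ estimate is exactly where the threshold $p^{-1/2}$ is attained. It is also the reason lines are allowed in the admissible class only with the weaker pointwise bound $p^{-1}$ and nothing better. Combining the three pieces proves the lemma, with constants depending only on the degrees through the number and degrees of the lines and curves.
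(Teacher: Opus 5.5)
Your proposal is correct and is essentially the paper's proof. It uses the same three-way split: the off-exceptional part (Schur's test with $p\cdot p^{-3/2}=p^{-1/2}$), the curve part ($O(1)$ points per row and column by B\'ezout, so Schur gives $O(p^{-1})$), and the line part (rank one, with norm equal to the $\ell^2$ norm of the row or column, $(p\cdot p^{-2})^{1/2}=p^{-1/2}$). The only bookkeeping slip is that $C=C\mathbf 1_{E^c}+C\mathbf 1_L+C\mathbf 1_\Gamma$ double counts $L\setminus E$ when the covering lines are not contained in $E$; fix this by replacing $L$ with $L\cap E$, or by first enlarging $E$ to the full union of lines and curves, which is harmless since $p^{-3/2}\le p^{-1}$.
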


\begin{proof}
Write
$
  C=C_{\mathrm{off}}+C_{\mathrm{bad}},
$
where
$$
  C_{\mathrm{bad}}(m,m')
  =
  C(m,m')\mathbf 1_E(m,m')
\qquad\text{and}\qquad 
  C_{\mathrm{off}}(m,m')
  =
  C(m,m')\mathbf 1_{\Fp^2\setminus E}(m,m').
$$

For the off-exceptional part, every row has absolute sum at most
$$
  \sum_{m'\in\Fp}|C_{\mathrm{off}}(m,m')|
  \lesssim
  p\cdot p^{-3/2}
  =
  p^{-1/2}.
$$
The same estimate holds for every column:
$$
  \sum_{m\in\Fp}|C_{\mathrm{off}}(m,m')|
  \lesssim p^{-1/2}.
$$
Schur's test therefore gives
$$
  \|T_{C_{\mathrm{off}}}\|_{\ell^2\to\ell^2}
  \lesssim p^{-1/2}.
$$

We next consider the part supported on $E$.  Let $\Gamma$ be one of
the algebraic curves occurring in Definition~\ref{def:admissible}, and
assume that $\Gamma$ has no vertical or horizontal component.  Since
$\Gamma$ has bounded degree, B\'ezout's theorem implies that every
vertical line
$
  \{m=m_0\}
$
and every horizontal line
$
  \{m'=m_0'\}
$
meets $\Gamma$ in $O(1)$ points, unless that line is a component of
$\Gamma$.  The latter possibility is excluded by the definition of
admissibility.

Consequently, the portion of $C_{\mathrm{bad}}$ supported on
$\Gamma(\Fp)$ has $O(1)$ nonzero entries in every row and in every
column.  Since each such entry has size $O(p^{-1})$, its row and column
sums are both $O(p^{-1})$.  Schur's test gives
$$
  \|T_{C_{\mathrm{bad}}\mathbf 1_{\Gamma}}\|_{\ell^2\to\ell^2}
  \lesssim p^{-1}.
$$

Now suppose that $E$ contains a vertical line
$$
  L=\{m=m_0\}.
$$
The operator defined by the kernel $C\mathbf 1_L$ is supported at the
single value $m=m_0$.  Its operator norm is the
$\ell^2$-norm of that row:
$$
  \bigg(
    \sum_{m'\in\Fp}|C(m_0,m')|^2
  \bigg)^{1/2}
  \lesssim
  \left(p\cdot p^{-2}\right)^{1/2}
  =
  p^{-1/2}.
$$
A horizontal line is handled in the same way by passing to the adjoint
operator.

Since an admissible set is contained in the union of only $O(1)$ such
curves and lines, the triangle inequality gives
$$
  \|T_{C_{\mathrm{bad}}}\|_{\ell^2\to\ell^2}
  \lesssim p^{-1/2}.
$$
Combining the estimates for $C_{\mathrm{off}}$ and
$C_{\mathrm{bad}}$, we conclude that
$$
  \|T_C\|_{\ell^2\to\ell^2}
  \lesssim p^{-1/2}.
$$
The proof is complete.
\end{proof}

\begin{proposition}
\label{prop:admissible-estimate}
Suppose that, for every nonzero Han--Lacey--Yang frequency shift
$$
  H\in\Fp^2\setminus\{0\},
$$
there exists an admissible set $E_H\subset\Fp^2$ such that the Han--Lacey--Yang
correlation kernel $C_H(m,m')$ satisfies
$$
  |C_H(m,m')|
  \lesssim p^{-3/2}
  \qquad
  \bigl((m,m')\notin E_H\bigr),
$$
and
$$
  |C_H(m,m')|
  \lesssim p^{-1}
  \qquad
  \bigl((m,m')\in E_H\bigr).
$$
Then Kernel Estimate~\ref{est:HLY-kernel} holds.
\end{proposition}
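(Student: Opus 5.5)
This follows almost directly from Lemma~\ref{lem:admissible-matrix}, applied for each fixed nonzero shift $H$. Fix $H\in\Fp^2\setminus\{0\}$ and take $C=C_H$, the correlation kernel of Lemma~\ref{lem:HLY-matrix-reduction}, with $E=E_H$ the admissible set supplied by the hypothesis.

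The two hypotheses of Proposition~\ref{prop:admissible-estimate} are exactly the pointwise bounds required in Lemma~\ref{lem:admissible-matrix}: $O(p^{-3/2})$ off $E_H$ and $O(p^{-1})$ on $E_H$. That lemma therefore gives
$$
  \|T_H\|_{\ell^2(\Fp)\to\ell^2(\Fp)}\lesssim p^{-1/2},
$$
which is the statement of Kernel Estimate~\ref{est:HLY-kernel} for this $H$. By the equivalence recorded in Lemma~\ref{lem:HLY-matrix-reduction}, this bound is precisely what the duality and $TT^*$ argument of \cite[Lemma~4.3]{HLY} needs to produce the $p^{-1/4}$ gain.

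The one point needing care is uniformity. The implied constant in Kernel Estimate~\ref{est:HLY-kernel} must not depend on $H$ or on $p$, only on the degrees, since Lemma~\ref{lem:refined-HLY} sums the resulting bounds over all $H\neq0$.

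In Lemma~\ref{lem:admissible-matrix}, the final constant depends only on three quantities: the implicit constants in the two pointwise bounds, the number of lines and curves making up $E$, and the degrees of those curves. The last of these enters through the B\'ezout bound on how many points a curve shares with a vertical or horizontal line. Definition~\ref{def:admissible} requires these counts and degrees to be bounded in terms of $\deg\phi_1,\deg\phi_2$ alone. I will therefore read the hypothesis of Proposition~\ref{prop:admissible-estimate} as asserting uniformity in $H$ both for the admissibility data of $E_H$ and for the constants in the two pointwise bounds, and then check that each step of the proof of Lemma~\ref{lem:admissible-matrix} produces a constant depending only on these inputs.

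Two further points follow from the same reading. First, the four exceptional lines $m,m'\in\{0,-H_2\}$ from the cutoff in Lemma~\ref{lem:HLY-matrix-reduction} can simply be included in $E_H$ as lines of type (i). Second, on those lines the bound $|C_H|\lesssim p^{-1}$ still has to be available, but it is part of the hypothesis as stated, so nothing extra needs proving.

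There is no real obstacle here. The only delicate step is making the uniformity bookkeeping explicit, so that the sum over $H$ in Lemma~\ref{lem:refined-HLY} goes through with a degree-dependent constant.
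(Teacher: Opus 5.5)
Your proposal is correct and follows the paper's proof: for each nonzero $H$, you apply Lemma~\ref{lem:admissible-matrix} to $C_H$ and $E_H$ to get $\|T_H\|_{\ell^2\to\ell^2}\lesssim p^{-1/2}$, which by Lemma~\ref{lem:HLY-matrix-reduction} is exactly Kernel Estimate~\ref{est:HLY-kernel}. Your extra bookkeeping on uniformity in $H$ (constants and admissibility data depending only on the degrees) is left implicit in the paper through Definition~\ref{def:admissible}, and making it explicit is a sensible addition.
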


\begin{proof}
By Lemma~\ref{lem:HLY-matrix-reduction}, the estimate required in the
$TT^*$-argument is the operator bound
$$
  \|T_H\|_{\ell^2(\Fp)\to\ell^2(\Fp)}
  \lesssim p^{-1/2},
$$
where $T_H$ is the operator associated with the correlation matrix
$C_H(m,m')$.

For each nonzero $H$, the hypotheses of
Lemma~\ref{lem:admissible-matrix} apply to $C_H$ and $E_H$.
Consequently,
$$
  \|T_H\|_{\ell^2(\Fp)\to\ell^2(\Fp)}
  \lesssim p^{-1/2}.
$$
This proves Kernel Estimate~\ref{est:HLY-kernel}; taking the square root
in the $TT^*$-step gives the
$p^{-1/4}$ bilinear gain required in the Han--Lacey--Yang estimate for $J_3$.
\end{proof}

\subsection{The remaining equal-degree resonant estimate}
\label{subsec:resonant-target}

Proposition~\ref{prop:admissible-estimate} reduces Kernel
Estimate~\ref{est:HLY-kernel} to two pointwise bounds for the correlation
matrix: a square-root cancellation bound outside an admissible set and a
cruder bound on that set.

{
For polynomials of distinct degrees and for two quadratic polynomials, the
required operator estimate follows from \cite[Lemmas~4.3--4.4]{HLY}.  We
shall also use the following equal-degree consequence of the proof in the
appendix of that paper.

\begin{lemma}\label{lem:HLY-equal-degree-nonresonant}
Suppose that $\deg\phi_1=\deg\phi_2=d>2$, and write
$$
  \phi_1(y)=ay^d+\cdots,
  \qquad
  \phi_2(y)=by^d+\cdots.
$$
If $H=(H_1,H_2)\neq0$ satisfies $aH_1\neq bH_2$, then
$$
  \|T_H\|_{\ell^2(\Fp)\to\ell^2(\Fp)}
  \lesssim p^{-1/2}.
$$
\end{lemma}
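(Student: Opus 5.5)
We will deduce Lemma~\ref{lem:HLY-equal-degree-nonresonant} from Proposition~\ref{prop:admissible-estimate}. For a fixed nonzero $H$ with $aH_1\neq bH_2$ we will exhibit an admissible set $E_H$ (in fact a union of $O(1)$ vertical and horizontal lines together with the generalized diagonal $\{m=m'\}$) such that $|C_H(m,m')|\lesssim p^{-3/2}$ off $E_H$ and $|C_H(m,m')|\lesssim p^{-1}$ on $E_H$. Lemma~\ref{lem:admissible-matrix} then gives the operator bound.

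\textbf{Step 1: the trivial and cut-off lines.} By Lemma~\ref{lem:HLY-matrix-reduction}, the full matrix differs from the expression \eqref{eq:HLY-pminus3} only on the four lines $m=0$, $m=-H_2$, $m'=0$, $m'=-H_2$, and we put these lines into $E_H$. On any line we use the bound $|C_H(m,m')|\le \sum_n|\widetilde K(n,m)|^2|\widetilde K(n-H_1,m+H_2)|^2$ together with Weil's bound $|K(a,b)|\lesssim p^{-1/2}$ for $(a,b)\neq0$. The terms where an argument of $K$ vanishes are handled by $|K|\le 1$ and contribute $O(1)$ values of $n$. This gives $O(p^{-1})$ on $E_H$.

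\textbf{Step 2: the generic bound via the appendix of \cite{HLY}.} Off $E_H$ we apply the Han--Lacey--Yang appendix estimate, Deligne's bound for the exponential sum \eqref{eq:HLY-pminus3} on the threefold $G=0$. That argument checks two things.
\begin{itemize}
\item The leading form of $G$, namely $a(y_1^d-y_2^d-y_3^d+y_4^d)$, defines a smooth projective hypersurface since $p\nmid d$ for $p>p_0$.
\item The top-degree part of the phase $\mathcal H_{H,m,m'}$ is
$$b\bigl(m(y_1^d-y_2^d)+m'(y_4^d-y_3^d)+H_2(y_4^d-y_2^d)\bigr)+aH_1(y_2^d-y_4^d).$$
Modulo the leading form of $G$, the intersection of the two leading forms must be smooth of codimension two in $\mathbb P^3$.
\end{itemize}
The Jacobian computation in that appendix reduces this to the nonvanishing of certain $2\times2$ minors of the coefficient vectors of $(y_1^{d-1},\dots,y_4^{d-1})$. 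Those vectors are $a(1,-1,-1,1)$ and
$$\bigl(bm,\;-bm-bH_2+aH_1,\;-bm',\;bm'+bH_2-aH_1\bigr).$$
Because $aH_1-bH_2\neq0$, proportionality of the two vectors, or failure of smoothness on coordinate subspaces, forces $m=m'$ or one of finitely many linear relations in $m,m'$ alone. These relations define lines of the allowed type, or the diagonal, which has no vertical or horizontal component. Off them Deligne gives $|C_H|\lesssim p^{-3}\cdot p^{3/2}$, with constants depending only on $d$.

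\textbf{Step 3: the bound on the diagonal.} On $m=m'$ the sum is just the $TT^*$ square: $C_H(m,m)=\sum_n|\Delta_H\widetilde K(n,m)|^2\lesssim p\cdot p^{-2}$ by Weil, as in Step 1.

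The main obstacle is Step 2, namely verifying exactly which $(m,m')$ make the leading-form intersection singular and confirming that this bad set is a bounded union of lines or curves without vertical or horizontal components. We will carry this out by working through the appendix's minors with the hypothesis $aH_1\ne bH_2$. That hypothesis is what keeps the $y_2,y_4$ coefficients from degenerating, and it is the point where the resonant case fails.
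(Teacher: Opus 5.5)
Your proposal is correct and follows essentially the paper's argument. Both use the top-degree Jacobian of \cite[Appendix~A]{HLY} with second-row coefficients $bm$, $b(m+H_2)-aH_1$, $bm'$, $b(m'+H_2)-aH_1$; note that $aH_1\neq bH_2$ confines the rank-one locus to a generalized diagonal; apply \cite[Theorem~A.1]{HLY} off it and the Weil bound on it; and finish with Schur's test, which you route through Lemma~\ref{lem:admissible-matrix} instead of applying directly.

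Two small corrections:
\begin{itemize}
\item The bad set is the three slope-one lines $m-m'\in\{0,\pm(bH_2-aH_1)/b\}$, not just $\{m=m'\}$. The shifted lines come from equating the first coefficient with the fourth, or the second with the third.
\item In Step~1 the bound should be $|C_H(m,m')|\le\sum_n|\widetilde K(n,m)\,\widetilde K(n-H_1,m+H_2)\,\widetilde K(n,m')\,\widetilde K(n-H_1,m'+H_2)|\lesssim p\cdot p^{-2}$. This holds at every $(m,m')$, because the cutoff makes every value of $\widetilde K$ equal to $O(p^{-1/2})$ by Weil. So the diagonal needs no separate treatment.
\end{itemize}
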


\begin{proof}
In the equal-degree calculation of \cite[Appendix~A]{HLY}, the four
coefficients in the second row of the top-degree Jacobian, apart from the
common factor $d$ and the displayed signs, are $bm$, $b(m+H_2)-aH_1$,
$bm'$, and $b(m'+H_2)-aH_1$.
When all four geometric variables are nonzero, rank one forces these four
coefficients to be equal.  If some variables vanish, the corresponding
equalities are omitted.  Since $aH_1-bH_2\neq0$, the solutions of these
finitely many systems lie in the generalized diagonal
$D_H\subset\Fp^2$ constructed in that appendix.  Applying
\cite[Theorem~A.1]{HLY} as in the proof of \cite[Lemma~4.4]{HLY} gives
$$
  |C_H(m,m')|
  \lesssim p^{-3/2}
  \qquad ((m,m')\notin D_H),
$$
while the pointwise Weil estimate \cite[(4.2)]{HLY} gives
$$
  |C_H(m,m')|
  \lesssim p^{-1}
  \qquad ((m,m')\in D_H).
$$
Every row and every column of $D_H$ contains $O_d(1)$ points.  Splitting
the correlation matrix into its generalized-diagonal and off-diagonal
parts and applying Schur's test gives the stated operator bound, exactly as
in the proof of \cite[Lemma~4.3]{HLY}.
\end{proof}

It remains to treat the equal-degree resonant shifts.

Assume that $\deg\phi_1=\deg\phi_2=d>2$.
Write
$$
  \phi_1(y)=ay^d+\cdots,
  \qquad
  \phi_2(y)=by^d+\cdots,
  \qquad
  a,b\neq0,
$$
and define $\alpha=b/a$ and $D=\phi_2-\alpha\phi_1$.
The leading terms cancel, so $\deg D<d$.  Linear independence gives
$D\neq0$, while $D(0)=\phi_2(0)-\alpha\phi_1(0)=0$.
Thus $D$ cannot be a nonzero constant, and hence $0<\deg D<d$.

By Lemma~\ref{lem:HLY-equal-degree-nonresonant}, it remains to consider a
nonzero shift $H=(H_1,H_2)$ satisfying $aH_1=bH_2$.  Such a shift is
uniquely of the form $H=(\alpha\lambda,\lambda)$ with
$\lambda\in\Fp^\times$.
We fix $\lambda$ throughout the remainder of this subsection.
}

Define $\Phi(u,v)=\phi_1(u)-\phi_1(v)$ and
$\Xi(u,v)=\phi_2(u)-\phi_2(v)$, and write
$\Omega(v)=-\lambda D(v)$.
{
Indeed, substituting $H=(\alpha\lambda,\lambda)$ into the part of the phase
in \eqref{eq:HLY-pminus3} which is independent of $m,m'$ gives
\begin{align*}
  &H_1\bigl(\phi_1(y_2)-\phi_1(y_4)\bigr)
  +H_2\bigl(\phi_2(y_4)-\phi_2(y_2)\bigr) 
  =-\lambda D(y_2)+\lambda D(y_4)
  =\Omega(y_2)-\Omega(y_4).
\end{align*}
}
For $m,t\in\Fp$, define the twisted fibre trace
$$
  \widetilde A_m(t)
  =
  \sum_{\substack{(u,v)\in\Fp^2\\\Phi(u,v)=t}}
  \psi\!\bigl(m\Xi(u,v)+\Omega(v)\bigr).
$$

The corresponding resonant correlation sum is
\begin{equation}
\label{eq:S-lambda-target}
  S_\lambda(m,m')
  =
  \sum_{t\in\Fp}
  \widetilde A_m(t)
  \overline{\widetilde A_{m'}(t)}.
\end{equation}
Expanding the two fibre traces gives
\begin{align*}
  S_\lambda(m,m')
  =
  \sum_{\substack{y_1,y_2,y_3,y_4\in\Fp\\
  \Phi(y_1,y_2)=\Phi(y_3,y_4)}}
  \psi\Bigl(
    &m\Xi(y_1,y_2)+\Omega(y_2) 
    -m'\Xi(y_3,y_4)-\Omega(y_4)
  \Bigr).
\end{align*}

Introduce the affine hypersurface
$$
  W
  =
  \left\{
    (y_1,y_2,y_3,y_4)\in\mathbb A^4:
    \phi_1(y_1)-\phi_1(y_2)
    =
    \phi_1(y_3)-\phi_1(y_4)
  \right\}.
$$
On $W$, define
$$
  f_0
  =
  \Omega(y_2)-\Omega(y_4)=\lambda (D(y_4)-D(y_2)),
$$
and
$$
  f_1
  =
  \Xi(y_1,y_2)=\phi _2(y_1)-\phi _2(y_2),
  \qquad
  f_2
  =
 {-\Xi(y_3,y_4)=\phi _2(y_4)-\phi _2(y_3).}
$$
Then \eqref{eq:S-lambda-target} becomes
\begin{equation}
\label{eq:S-lambda-threefold}
  S_\lambda(m,m')
  =
  \sum_{w\in W(\Fp)}
  \psi\!\bigl(
    f_0(w)+mf_1(w)+m'f_2(w)
  \bigr).
\end{equation}

Thus $S_\lambda(m,m')$ is a two-parameter family of exponential sums on
the three-dimensional hypersurface $W$.  The map governing the two
Fourier parameters is
$$
  F=(f_1,f_2):W\longrightarrow\mathbb A^2.
$$

It is actually more convenient to work with the augmented map
$$
  \widetilde{F}=(f_0,f_1,f_2):W\longrightarrow\mathbb A^3.
$$

{
If $\psi_\lambda(t)=\psi(\lambda t)$, then
$$
  S_\lambda^\psi(m,m')
  =
  S_1^{\psi_\lambda}\!\left(\frac m\lambda,\frac{m'}\lambda\right).
$$
Since $\psi_\lambda$ is nontrivial and the rescaling of $(m,m')$ is
bijective, bounds for $\lambda=1$ that are uniform over nontrivial additive
characters are uniform in $\lambda$.
}

\smallskip

The following are the two estimates that will be proved in
Section~\ref{sec:replacement}.

\begin{enumerate}[label=\rm(\roman*),leftmargin=2.2em]

\item There exists a closed locus
$$
  \mathcal Z_\lambda\subset\mathbb A^2_{m,m'}
$$
of dimension at most one and degree $O_{\deg}(1)$ such that
\begin{equation}
\label{eq:S-good-needed}
  |S_\lambda(m,m')|
  \lesssim p^{3/2}
  \qquad
  \bigl((m,m')\notin \mathcal Z_\lambda(\Fp)\bigr).
\end{equation}
This is a square-root cancellation estimate for the
generic parameters. 

\smallskip

\item Uniformly for all $m,m'\in\Fp$,
\begin{equation}
\label{eq:S-crude-needed}
  |S_\lambda(m,m')|
  \lesssim p^2.
\end{equation}
This cruder estimate is needed only on the exceptional set
$\mathcal Z_\lambda(\Fp)$.

\end{enumerate}

{
These estimates imply the required Han--Lacey--Yang matrix bound.
For the resonant shift $H=(\alpha\lambda,\lambda)$, write
$C_\lambda=C_{(\alpha\lambda,\lambda)}$.
By \eqref{eq:HLY-pminus3}, direct substitution shows that
\begin{equation}
\label{eq:C-S-relation-needed}
  C_\lambda(m,m')
  =
  \mathbf 1_{\substack{
    m\ne0,\ m+\lambda\ne0\\
    m'\ne0,\ m'+\lambda\ne0
  }}
  p^{-3}S_\lambda(m,m').
\end{equation}
In particular, the truncated correlation entry vanishes on the four lines
$m=0$, $m=-\lambda$, $m'=0$, and $m'=-\lambda$.

Define the exceptional locus
$$
  E_\lambda
  =
  \mathcal Z_\lambda(\Fp)
  \cup\{m=0\}
  \cup\{m=-\lambda\}
  \cup\{m'=0\}
  \cup\{m'=-\lambda\}.
$$

Combining \eqref{eq:S-good-needed} with
\eqref{eq:C-S-relation-needed} gives
$$
  |C_\lambda(m,m')|
  \lesssim p^{-3/2}
  \qquad
  \bigl((m,m')\notin E_\lambda\bigr).
$$
The estimate \eqref{eq:S-crude-needed}, together with the vanishing on the
four cutoff lines, gives
$$
  |C_\lambda(m,m')|
  \lesssim p^{-1}
  \qquad
  \bigl((m,m')\in E_\lambda\bigr).
$$

It remains only to verify that $E_\lambda$ is admissible.  Since
$\mathcal Z_\lambda$ has dimension at most one and bounded
degree, it has only $O_{\deg}(1)$ irreducible components.  Each
one-dimensional component is one of the following:

\begin{enumerate}[label=\rm(\alph*),leftmargin=2.2em]
\item a vertical line;
\item a horizontal line;
\item a bounded-degree curve with no vertical or horizontal component.
\end{enumerate}

The zero-dimensional part contains only $O_{\deg}(1)$ geometric points,
by the degree bound.  Its $\Fp$-points may therefore be covered by
$O_{\deg}(1)$ vertical lines.  Adding the four cutoff lines therefore
shows that $E_\lambda$ is admissible.

Once \eqref{eq:S-good-needed} and \eqref{eq:S-crude-needed} have been
established, Lemma~\ref{lem:admissible-matrix}, applied to
$C_\lambda$ and $E_\lambda$ for each shift
$H=(\alpha\lambda,\lambda)$, gives the required operator estimate in the
equal-degree resonant case.  For distinct degrees and for the equal-degree
quadratic case, the estimate follows from
\cite[Lemmas~4.3 and~4.4]{HLY}; for equal degree $d>2$ and a nonresonant
shift, it follows from Lemma~\ref{lem:HLY-equal-degree-nonresonant}.
These cases exhaust all nonzero Han--Lacey--Yang shifts: the equal-degree linear case
cannot occur, because two linear polynomials with zero constant terms are
linearly dependent.  Hence Kernel Estimate~\ref{est:HLY-kernel} follows
once the two resonant exponential-sum estimates have been proved.
}

\medskip
\medskip

\section{The proofs of the remaining estimates concerning exponential sums}
\label{sec:replacement}

In this section we provide the proofs for the needed exponential sums estimates, and hence finish the proof of the main theorem.

\subsection{The resonant exponential sum problem}
\label{subsec:what-we-need}

We recall the resonant setup from Section~5 and formulate the two
exponential sum estimates that will be proved in this section.

Assume that $\phi _1,\phi _2\in \mathbb{Z}[y]$ with $\phi _1(0)=\phi _2(0)=0$,
$$
  \deg\phi_1=\deg\phi_2=d>2,
$$
and are not linearly dependent.  Write 
$$
  \phi_1(y)=ay^d+\cdots,
  \qquad
  \phi_2(y)=by^d+\cdots,
  \qquad
  a,b\neq0,
$$
and define
$$
  \alpha=\frac{b}{a}.
$$
In the resonant Han--Lacey--Yang kernel, the two relevant frequency parameters are of
the form
$$
  \eta_1=\alpha\lambda,
  \qquad
  \eta_2=\lambda,
  \qquad
  \lambda\in\mathbb F_p^\times.
$$
For the analysis of a single correlation sum, we fix
$\lambda\in\mathbb F_p^\times$.  The estimates obtained below will be
uniform in the choice of $\lambda$.

Write
$$
  D=\phi_2-\alpha\phi_1.
$$
Since the leading terms cancel, we have $\deg D<d$.  Moreover,
$D$ is not a nonzero constant because $\phi_1$ and $\phi_2$ are linearly independent and $\phi _1(0)=\phi _2(0)=0$.
Hence 
$$
  0<\deg D<d.
$$

Define
$$
  W
  =
  \left\{
    (y_1,y_2,y_3,y_4)\in\mathbb A^4:
    \phi_1(y_1)-\phi_1(y_2)
    =
    \phi_1(y_3)-\phi_1(y_4)
  \right\},
$$
and
$$
\begin{array}{rcl}
  f_0(y_1,y_2,y_3,y_4)
  &=&\lambda (D(y_4)-D(y_2)),\\
  f_1(y_1,y_2,y_3,y_4)
  &=&\phi _2(y_1)-\phi _2(y_2),\\
  f_2(y_1,y_2,y_3,y_4)
  &=&\phi _2(y_4)-\phi _2(y_3).
\end{array}
$$
We also define the exponential sum on $W$:
$$
  S_\lambda(m,m')
  =
  \sum_{w\in W(\mathbb F_p)}
  \psi\bigl(f_0(w)+mf_1(w)+m'f_2(w)\bigr).
$$

{The two estimates needed to complete the resonant case can
now be stated as follows; they will be proved in the remainder of this
section.}

\begin{lemma}
\label{lem:stratified-sqrt-S1}
Given a positive integer $d$. There exist $p_0>0$ and a closed subset
$$
  \mathcal Z_\lambda\subset\mathbb A^2_{m,m'}
$$
of dimension at most one and degree $O_{\deg}(1)$ such that for all $p>p_0$, and all pairs of polynomials $\phi _1,\phi _2\in \mathbb{Z}[y]$ of degree $\leq d$, vanishing at $y=0$ and are linearly independent after base change to $\mathbb{F}_p$, then
$$
  S_\lambda(m,m')
  =
  O_{\deg}(p^{3/2})
  \qquad
  \bigl((m,m')\notin\mathcal Z_\lambda(\mathbb F_p)\bigr).
$$
Moreover, uniformly for all
$$
  (m,m')\in\mathbb A^2(\mathbb F_p),
$$
one has
$$
  S_\lambda(m,m')
  =
  O_{\deg}(p^2).
$$
The implied constants and the degree of $\mathcal Z_\lambda$ are independent of $p$ and $\lambda\in\mathbb F_p^\times.$
\end{lemma}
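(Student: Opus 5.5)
We prove the two bounds separately, following the strategy announced in the introduction. By the rescaling remark at the end of Section~\ref{subsec:resonant-target} we may take $\lambda=1$, provided every bound is uniform over nontrivial additive characters $\psi$.

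\emph{The generic $p^{3/2}$ bound.} Let $\mathcal L_\psi$ be the Artin--Schreier sheaf. Consider the complex
\[
K=R\widetilde F_!\,\overline{\mathbb Q}_\ell[3]
\]
on $\mathbb A^3_{(s,t_1,t_2)}$. Then $S_\lambda(m,m')$ is, up to sign, the trace of Frobenius at $(1,m,m')$ of the Fourier transform of $K$, restricted to the plane $\{\xi_0=1\}$. Equivalently, it is the trace of the pushforward along the projection $(s,t_1,t_2)\mapsto(t_1,t_2)$ twisted by $\mathcal L_\psi(s)$, followed by a two-dimensional Fourier transform.

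The geometric input is the claim that $\widetilde F$ is generically finite on every three-dimensional geometric component of $W$, and that no fibre has dimension $\ge2$. We establish this by computing the Jacobian of $(f_0,f_1,f_2)$ together with $G$. Rank $<3$ on a component reduces to a polynomial identity among $\phi_1',\phi_2',D'$ at the points $y_i$. Using $0<\deg D<d$ and the linear independence of $\phi_1$ and $\phi_2$, we show that such identities force $y$ into a proper closed subset. Fibres are then bounded by B\'ezout.

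Given this, $R\widetilde F_!$ has perverse amplitude controlled by the fibre dimensions: it is semiperverse up to a shift of at most one on a curve. Katz--Laumon (their Fourier transform and stratification theorem) then shows that the Fourier transform is lisse of bounded rank and pure of weight $\le 3$ outside a closed set of dimension $\le1$ in the dual $\mathbb A^3$. Intersecting with $\xi_0=1$ gives $\mathcal Z_\lambda$. The degree is bounded because all data have complexity bounded in terms of $d$; the Betti and conductor bounds follow from Katz's sum-of-Betti-numbers estimates.

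\emph{Uniformity in the pair $(\phi_1,\phi_2)$.} We run the construction over the universal family: the parameter scheme $\mathcal P$ of coefficient vectors, with the open condition of linear independence, over $\mathbb Z$. Following Bonolis--Kowalski--Woo, the constructible stratification and the rank bounds spread out uniformly over $\mathcal P\times\mathbb A^3$ for $p>p_0(d)$. Generic base change then gives $\mathcal Z_\lambda$ as the fibre of a single family of bounded degree.

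\emph{The crude $p^2$ bound.} We use the fibre form $S=\sum_t\widetilde A_m(t)\overline{\widetilde A_{m'}(t)}$. Each $\widetilde A_m(t)$ is a sum over the curve $C_t=\{\phi_1(u)-\phi_1(v)=t\}$ of $\psi(m\Xi+\Omega(v))$. By Stein's theorem, $\phi_1(u)-\phi_1(v)-t$ is irreducible for all but $O_d(1)$ values of $t$. For those values the phase is nonconstant on $C_t$, by a degree comparison using $\deg D\ge1$, so Bombieri--Weil gives $|\widetilde A_m(t)|\ll_d p^{1/2}$. Cauchy--Schwarz over these $t$ yields $O(p\cdot p)=O(p^2)$. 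The $O_d(1)$ reducible fibres contribute at most $O(p)\cdot O(p)$ each, by the trivial bound $|C_t(\mathbb F_p)|\ll p$.

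\emph{Main obstacle.} We expect the hardest step to be the phase-nonconstancy check in the crude bound, which is delicate only for components of reducible fibres; the trivial bound handles those. The geometric finiteness claim for $\widetilde F$ on every component of $W$, including the diagonal components $y_1=y_2,\ y_3=y_4$ and $y_1=y_3,\ y_2=y_4$, is the genuine difficulty. On such components $f_1$ and $f_2$ degenerate, and finiteness must come from $f_0$ through $D$. We would first treat these explicit components by hand, and then show the remaining components are handled by the Jacobian argument.
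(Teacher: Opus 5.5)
\textbf{Verdict: there is a genuine gap.} Your proposal does not establish the geometric input, which carries the real content of this lemma. The rest of your argument is sound in outline and uses a different route from the paper for the generic bound.

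\textbf{The gap.} Two properties of $\widetilde F$ are needed, and your sketch proves neither.

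First, you need that no fibre of $\widetilde F$ has dimension $\ge2$. This cannot come from a Jacobian computation, since a two-dimensional fibre would simply sit inside the degeneracy locus. It cannot come from B\'ezout either, which only bounds finite fibres. The paper proves it directly. One has $\widetilde F^{-1}(u,v,c)\subset (C_u\times C'_v)\cap\{D(y_4)-D(y_2)=c\}$, where $C_u=\{\phi_2(y_1)-\phi_2(y_2)=u\}$ and $C'_v=\{\phi_2(y_4)-\phi_2(y_3)=v\}$. Every component of these curves dominates both coordinate lines, so $D(y_4)-D(y_2)$ is nonconstant on each product component because $\deg D\ge1$.

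Second, for generic finiteness it is not enough that
\[
  J_\Psi=\phi_2'(y_1)\phi_2'(y_4)D'(y_2)D'(y_3)-\phi_2'(y_2)\phi_2'(y_3)D'(y_1)D'(y_4)
\]
vanishes only on a proper closed subset of $\mathbb A^4$. You must show it does not vanish identically on any component of $W$, and your argument gives no control over those components. The missing idea is the comparison of leading forms, which is the paper's argument at the hyperplane at infinity:
\begin{itemize}
\item the top form of the defining polynomial of $W$ is a multiple of $y_1^d-y_2^d-y_3^d+y_4^d$, which is irreducible for $p\nmid d$;
\item $(J_\Psi)_{\mathrm{top}}$ is a nonzero multiple of $(y_1y_2y_3y_4)^{e-1}\prod_{\zeta^{d-e}=1}(y_1y_4-\zeta y_2y_3)$, with $e=\deg D$, and all its factors have degree at most $2<d$;
\item hence no component of $W$ lies in $\{J_\Psi=0\}$.
\end{itemize}
The same leading-form observation shows that the defining polynomial of $W$ is absolutely irreducible. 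So your plan to treat ``diagonal components'' by hand is misdirected. The sets $\{y_1=y_2,\ y_3=y_4\}$ and $\{y_1=y_3,\ y_2=y_4\}$ are surfaces inside $W$ that $\widetilde F$ maps onto curves, not components of $W$. They matter for the fibre-dimension statement, not for generic finiteness.

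\textbf{Your generic-bound route.} Granting these two properties, your route is different from and cleaner than the paper's. The paper splits $W$ into two parts:
\begin{itemize}
\item the part over $\mathbb A^3\setminus V_1$, where $\widetilde F$ is smooth of relative dimension $0$;
\item $W_1=\widetilde F^{-1}(V_1)$, whose components it treats according to the dimension of their images.
\end{itemize}
You instead observe that the two properties are exactly what make $R\widetilde F_!\overline{\mathbb Q}_\ell[3]$ semiperverse. The image of the locus of positive-dimensional fibres, which carries $\mathcal H^{-1}$, has dimension $\le1$, and $\mathcal H^{j}=0$ for $j\ge0$. The t-exactness of the Fourier transform then replaces the paper's case analysis of $W_1$ with a single step.

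\textbf{Corrections to your Katz--Laumon step.}
\begin{itemize}
\item Semiperversity of the transform gives $H^i_c=0$ for $i>3$ only off a closed set of dimension $\le2$, not $\le1$. The transform need not be lisse there, and ``pure of weight $\le3$'' should read ``mixed of weights $\le3$''.
\item To get a curve in the plane $\xi_0=1$, you therefore need this exceptional set to be conic, as in the paper's ``homogeneous hypersurface''. This follows from uniformity in $\psi$, since $\mathcal L_\psi(a\,\xi\cdot z)=\mathcal L_{\psi_a}(\xi\cdot z)$ for $a\in\mathbb F_p^\times$.
\end{itemize}

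Your crude bound and your uniformity step are the same as the paper's.
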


\begin{remark} For simplicity, in what follows we fix
$\lambda=1$, and hence $f_0=D(y_4)-D(y_2)$.  The proof for other nonzero
values of $\lambda$ is similar.
\end{remark}

\subsection{Fibres and images of the relevant maps}
\label{subsec:fibres-images}

We shall use the following two properties of the fibres and images of these
maps. {Here we work over $\mathbb{Z}$, and in the next subsection will base change to $\mathbb{F}_p$ for $p$ larger than an appropriate number $N$.}

\medskip

\noindent
\textbf{Property 1.}
The augmented map has finite generic fibres.

\medskip

\noindent
\textbf{Property 2.}
The augmented map contracts no surface to a point.

\medskip

We recall the augmented map from the previous subsection 
$$
  \widetilde{F}=(f_0,f_1,f_2):W\longrightarrow\mathbb A^3.
$$

\begin{lemma} 
\label{lem:augmented-map-fibres}

Every geometric fibre of $\widetilde{F}$ has dimension at most one.
\end{lemma}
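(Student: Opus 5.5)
The bound can be proved directly from the shape of the three coordinate functions, without using the defining equation of $W$. The plan is to project a fibre onto the $(y_2,y_4)$-plane, observe that the image lies on a plane curve, and show that the projection has finite fibres. Then no component of a fibre can have dimension two or more.

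\textbf{Setup.} Fix an algebraically closed field $k$, of characteristic $0$ or $p>p_0$, and a point $c=(c_0,c_1,c_2)\in\mathbb A^3(k)$. The fibre $\widetilde F^{-1}(c)$ is the closed subscheme of $W_k$ cut out by
\begin{equation*}
  D(y_4)-D(y_2)=c_0,\qquad
  \phi_2(y_1)-\phi_2(y_2)=c_1,\qquad
  \phi_2(y_4)-\phi_2(y_3)=c_2,
\end{equation*}
together with the equation of $W$. We work with $\lambda=1$, as in the preceding remark; for general $\lambda\in\Fp^\times$ one replaces $c_0$ by $c_0/\lambda$. Since $W$ only cuts the fibre down further, it suffices to show that the first three equations already define a subvariety of $\mathbb A^4_k$ of dimension at most one.

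\textbf{Key steps.} Let $\pi:\mathbb A^4\to\mathbb A^2$ be the projection $(y_1,y_2,y_3,y_4)\mapsto(y_2,y_4)$, and let $Z$ be any irreducible component of the fibre.

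\begin{enumerate}[label=\rm(\arabic*),leftmargin=2.2em]
\item The first equation places $\pi(Z)$ inside the plane curve $\Gamma_{c_0}=\{D(y_4)-D(y_2)=c_0\}$. This curve is a proper closed subset of $\mathbb A^2_k$ as soon as $D\otimes k$ is nonconstant: then $D(y_4)-D(y_2)-c_0$ is a nonconstant polynomial in $y_4$. Hence $\dim\overline{\pi(Z)}\le 1$.
\item Fix $(y_2,y_4)\in\Gamma_{c_0}(k)$. The second equation is a nonconstant polynomial equation in $y_1$ alone, and the third is one in $y_3$ alone, provided $\phi_2\otimes k$ is nonconstant. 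Each therefore has at most $d$ roots, so every fibre of $\pi|_Z$ has at most $d^2$ points.
\item By the fibre-dimension theorem, $\dim Z\le\dim\overline{\pi(Z)}+0\le 1$, which is the claimed bound.
\end{enumerate}

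\textbf{Main obstacle.} The only delicate point is the nonconstancy of $D$ and $\phi_2$ after reduction, uniformly in the characteristic.

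For $\phi_2$: the pair $\phi_1,\phi_2$ is linearly independent after base change to $\Fp$, so $\phi_2\not\equiv0$ modulo $p$. Since $\phi_2(0)=0$, a nonzero reduction cannot be a constant. This holds even if the leading coefficient $b$ happens to vanish modulo $p$.

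For $D=\phi_2-\alpha\phi_1$, one needs $a\not\equiv0$ modulo $p$ so that $\alpha=b/a$ makes sense. This is arranged by the choice of $p_0$, or equivalently by working over the open part of the universal parameter space where $a$ is invertible, as in the setup of this section. Linear independence modulo $p$ then gives $D\not\equiv0$, and $D(0)=0$ again forces $D$ to be nonconstant.

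With these two points checked, steps (1)–(3) apply verbatim to every geometric fibre, over $\mathbb Z$ and over each $\overline{\mathbb F}_p$ with $p>p_0$. The bound on fibre dimension is therefore uniform in the coefficients of $\phi_1,\phi_2$.
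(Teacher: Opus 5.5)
Your argument is correct, and it organizes the dimension count a little differently from the paper.

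The paper first uses the two $\phi_2$-equations to form the surface $C_u\times C'_v\subset\mathbb A^2_{y_1,y_2}\times\mathbb A^2_{y_3,y_4}$. It shows that every irreducible component of $C_u$ and $C'_v$ dominates both coordinate lines. Fixing a general point of the first factor, it deduces that $f_0=D(y_4)-D(y_2)$ is nonconstant on each product component, so the level set $f_0=c$ cuts every two-dimensional component properly.

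You instead begin with the $D$-equation, which confines $(y_2,y_4)$ to the plane curve $\Gamma_{c_0}$. You then use the two $\phi_2$-equations only to see that the projection to the $(y_2,y_4)$-plane is quasi-finite, with at most $d^2$ points per fibre.

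Both routes rest on the same two inputs, nonconstancy of $\phi_2$ and of $D$, and both discard the equation of $W$. Your version replaces the dominance statement and the general-point step by the fact that a nonconstant one-variable polynomial of degree at most $d$ has at most $d$ roots. It also gives an explicit finite map onto a curve, and it carries out the nonconstancy checks over $\overline{\mathbb F}_p$, which the paper leaves implicit by working over $\mathbb C$. The paper's framing has the expository merit of exhibiting the two-dimensional sets $C_u\times C'_v$ on which $(f_1,f_2)$ is constant, and of showing that it is precisely the adjoined coordinate $f_0$ that cuts them down to curves.

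One small precision: to get constants uniform in the family, you cannot arrange $a\not\equiv0\pmod p$ by enlarging $p_0$. It comes instead from the standing hypothesis $\deg\phi_1=\deg\phi_2=d$ over $\Fp$ in the resonant case, or from restricting to the locus where $a$ is a unit, which is your second alternative. The two options you list are therefore not equivalent, but the second one suffices.
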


\begin{proof}
Fix
$$
  (u,v,c)\in\mathbb A^3 {(\mathbb{C})}.
$$
Before imposing the equation defining $W$, the equations
$$
  \phi _2(y_1)-\phi _2(y_2)=u \qquad{and}\qquad
  -\phi _2(y_3)+\phi _2(y_4)=v
$$
define a product
$$
  C_u\times C_v'
$$
of two affine plane curves.

Every irreducible component of $C_u$ dominates both coordinate lines.
Indeed, if $y_1=c_0$ were a vertical component, then the polynomial
$$
  \phi _2(c_0)-\phi _2(y_2)-u
$$
would vanish identically in $y_2$, which is impossible because $\phi _2$ is
nonconstant.  The same argument applies to the other coordinate projection,
and likewise to the components of $C_v'$.

Let $C\times C'$ be an irreducible product component of
$C_u\times C_v'$.  The function
$$
  f_0=D(y_4)-D(y_2)
$$
is nonconstant on $C\times C'$.  Indeed, if $G$ were constant, then,
after fixing a general point of $C$, the function $D(y_4)$ would be
constant on $C'$.  Since the projection
$$
  C'\longrightarrow\mathbb A^1_{y_4}
$$
is dominant, this would force $D$ itself to be constant, contradicting
$$
  \deg D\geq1.
$$

It follows that the equation
$$
  f_0=c
$$
cuts every two-dimensional product component properly.  Hence the resulting
intersection has dimension at most one.  Imposing in addition the equation
defining $W$ cannot increase the dimension.  Therefore every geometric
fibre of $\Theta$ has dimension at most one.
\end{proof}

In particular, no surface contained in $W$ can be mapped by $\widetilde{F}$ to
a point.  Thus Lemma~\ref{lem:augmented-map-fibres} establishes
Property~2.

\begin{lemma} 
\label{lem:augmented-map-generic-finiteness}
After excluding finitely many characteristics depending only on $d$, the
restriction of $\Theta$ to every three-dimensional {geometrically} irreducible component
of $W$ is generically finite.
\end{lemma}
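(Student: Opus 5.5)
To prove Lemma~\ref{lem:augmented-map-generic-finiteness}, I would reduce generic finiteness to a dimension count. Let $W_0$ be a three-dimensional geometrically irreducible component of $W$ and write $\Theta=\widetilde F|_{W_0}$. Then $\Theta$ is generically finite exactly when $\dim\overline{\Theta(W_0)}=3$. By the fibre dimension theorem this is equivalent to the generic fibre of $\Theta$ having dimension $0$. Lemma~\ref{lem:augmented-map-fibres} already bounds every fibre dimension by one. So it suffices to exclude the case in which the generic fibre is a curve, that is, the case $\dim\overline{\Theta(W_0)}=2$.

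I would show that the image is three-dimensional by exhibiting a smooth point of $W_0$ where the differential of $\Theta$ has rank $3$. In characteristic $0$ it is enough to find a point of $W_0$ at which the $4\times4$ matrix with rows $dG$, $df_0$, $df_1$, $df_2$ is nonsingular. Here $G=\phi_1(y_1)-\phi_1(y_2)-\phi_1(y_3)+\phi_1(y_4)$, and the matrix is
\[
\begin{pmatrix}
\phi_1'(y_1)&-\phi_1'(y_2)&-\phi_1'(y_3)&\phi_1'(y_4)\\
0&-D'(y_2)&0&D'(y_4)\\
\phi_2'(y_1)&-\phi_2'(y_2)&0&0\\
0&0&-\phi_2'(y_3)&\phi_2'(y_4)
\end{pmatrix}.
\]
Expanding, the determinant is a polynomial $J(y)$ in the four variables, and I need $J\not\equiv0$ on $W_0$. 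I would substitute $\phi_2=\alpha\phi_1+D$ and note that the rows with $\phi_2'$ can be reduced by subtracting $\alpha$ times $\phi_1'$-type entries. This leaves a determinant involving $\phi_1'$ and $D'$ at the four points. A direct expansion gives something of the form
\[
D'(y_2)\,\phi_1'(y_4)\,\bigl(\phi_1'(y_1)D'(y_3)-D'(y_1)\phi_1'(y_3)\bigr)-D'(y_4)\,\phi_1'(y_2)(\cdots)+\cdots,
\]
which is a nonzero polynomial in four independent variables because $0<\deg D<d=\deg\phi_1$. Concretely, $\phi_1'/D'$ is nonconstant by degree comparison, so the Wronskian-type expression $\phi_1'(s)D'(t)-D'(s)\phi_1'(t)$ is not identically zero.

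The main obstacle is that $J\not\equiv0$ on $\mathbb A^4$ does not immediately imply $J\not\equiv0$ on each component $W_0$. The hypersurface $W$ may be reducible; for instance, it contains the components $\{y_1=y_2,\,y_3=y_4\}$-type pieces only in lower dimension, but there can also be three-dimensional pieces coming from the factorization of $\phi_1(y_1)-\phi_1(y_2)$. I would handle this by parametrizing $W_0$ by three free coordinates. The projection of $W_0$ to $(y_2,y_3,y_4)$ is dominant and finite, since $y_1$ is algebraic over the others through $\phi_1(y_1)=\phi_1(y_2)+\phi_1(y_3)-\phi_1(y_4)$ with leading coefficient $a\ne0$. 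On $W_0$ the function $y_1$ is then an algebraic function of $(y_2,y_3,y_4)$.

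Next I would restrict to a convenient slice. Fixing $y_2,y_4$ generic, $f_0$ is constant, and generic finiteness reduces to the statement that $(f_1,f_2)$ separates the remaining curve in $(y_1,y_3)$. This holds as long as $\phi_2'(y_1)\phi_1'(y_3)\ne0$ along $\phi_1(y_1)+\ldots$, which is the nonvanishing of the $3\times3$ minor for the variables $y_1,y_3$ (after using $dG$) times $D'(y_2)$. The $y_4$-direction then supplies the third rank through $D'(y_4)$, because $f_0$ depends only on $y_2,y_4$, while on a generic slice $f_1,f_2$ depend on $y_4$ only through $y_1,y_3$. Equivalently, I expect the full minor to factor as roughly $D'(y_4)\phi_2'(y_1)\phi_1'(y_3)$ minus terms that are zero-free generically, so it is nonvanishing on a dense open subset of every component. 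This is the step I would check most carefully, using the explicit matrix above.

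Finally, to pass to characteristic $p$, I would spread out. Everything is defined over $\mathbb{Z}[a^{-1}]$ and depends on the coefficients of $\phi_1,\phi_2$. By the uniform-family argument alluded to in the introduction (constructibility of the locus where the relative dimension of the image drops, in the universal family of pairs of degree $\le d$), the rank-$3$ condition persists for all $p$ outside a finite set depending only on $d$. This also requires $p>d$ so that $\phi_1'$ and $D'$ keep their degrees, since inseparability could otherwise kill the Jacobian.
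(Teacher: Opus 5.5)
There is a genuine gap, and it sits exactly at the step you flag as the one to ``check most carefully''. Your reduction to showing that the $4\times4$ determinant does not vanish identically on $W_0$ is the same as the paper's. Carrying out your row reduction exactly: since $\alpha G=f_1+f_2-f_0-K$ with $K=D(y_1)-D(y_3)$, the row $dG$ may be replaced by $-\alpha^{-1}dK$. The determinant is then a nonzero constant times
\[
J=\phi_2'(y_1)\phi_2'(y_4)D'(y_2)D'(y_3)-\phi_2'(y_2)\phi_2'(y_3)D'(y_1)D'(y_4).
\]
Nothing in your argument shows $J\not\equiv0$ on $W_0$.

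Your slicing argument only proves two things: $f_0$ is nonconstant, and $(f_1,f_2)$ is nonconstant on the curves where $y_2,y_4$ are fixed. That gives rank $\ge2$, which is no more than Lemma~\ref{lem:augmented-map-fibres} already gives. Rank $3$ needs $(f_1,f_2)$ to have rank $2$ on the surfaces $\{f_0=c\}\cap W_0$. In other words, the image curves of different slices with the same value of $f_0$ must not all lie on one curve. The sentence ``the $y_4$-direction supplies the third rank through $D'(y_4)$'' only counts $df_0$ a second time. A heuristic of the form ``main term minus terms that are generically nonzero'' cannot work either. $J$ is a difference of two generically nonzero products, and it vanishes identically on the surfaces $\{y_1=y_2,\,y_3=y_4\}$ and $\{y_1=y_3,\,y_2=y_4\}$ inside $W$. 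Excluding a whole component of $W$ from $\{J=0\}$ therefore needs a global argument.

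The missing idea, which is what the paper uses, is to compare leading homogeneous forms. For $p\nmid de$, with $e=\deg D$, and up to nonzero constants:
\[
G_{\mathrm{top}}=y_1^d-y_2^d-y_3^d+y_4^d,\qquad J_{\mathrm{top}}=(y_1y_2y_3y_4)^{e-1}\bigl(y_1^{d-e}y_4^{d-e}-y_2^{d-e}y_3^{d-e}\bigr).
\]
If a three-dimensional component lay in $\{J=0\}$, its boundary at infinity would be a surface contained in both $\{G_{\mathrm{top}}=0\}$ and $\{J_{\mathrm{top}}=0\}$. The Fermat surface lies in no coordinate plane. For $p\nmid d-e$ the remaining factor splits into the quadrics $y_1y_4=\zeta y_2y_3$, and none of them contains the Fermat surface: in the chart $y_1=1$, substitution gives $1-y_2^d-y_3^d+\zeta^d y_2^dy_3^d\not\equiv0$. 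This route also makes the excluded characteristics explicit ($p\mid d$, $p\mid e$, $p\mid d-e$), so your vague spreading-out step is unnecessary.

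Incidentally, your concern that $W$ has three-dimensional components coming from factorizations of $\phi_1(y_1)-\phi_1(y_2)$ is unfounded. For $p\nmid d$ the leading form of $G$ is the irreducible Fermat form, so $G$ is irreducible and $W$ is geometrically irreducible. That observation would let a single explicit point of $W$ with $J\neq0$ suffice. You do not produce one, and the obvious points, on the diagonal surfaces above, all give $J=0$.
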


\begin{proof}
We recall that $W$ is the hypersurface $H=0$, where
$$
  H
  =
  \phi _1(y_1)-\phi _1(y_2)-\phi _1(y_3)+\phi _1(y_4).
$$

Consider the map $\Psi: \mathbb{A}^4\rightarrow \mathbb{A}^4$ defined by
$$
  \Psi
  =
  \Bigl(
    D(y_4)-D(y_2),
    \phi _2(y_1)-\phi _2(y_2),
    -\phi _2(y_3)+\phi _2(y_4),
    D(y_1)-D(y_3)
  \Bigr).
$$
Write its four components as
$$
  A=\phi _2(y_1)-\phi _2(y_2),\qquad
  B=-\phi _2(y_3)+\phi _2(y_4),
$$
$$
  G=D(y_4)-D(y_2),\qquad
  K=D(y_1)-D(y_3).
$$
Since
$$
  \phi _2=\alpha \phi _1+D,
$$
 we have
$$
  H=\alpha^{-1}(A+B-G-K).
$$
Consequently, at a {\bf smooth} {geometric} point on $W$, the corresponding Jacobian determinants of $\widetilde{F}$ and $\Psi$ differ only by a non-zero factor. {Indeed,} a point $x=(y_1,y_2,y_3,y_4)\in W$ is singular iff $\phi _1'(y_1)=\phi _1'(y_2)=\phi _1'(y_3)=\phi _1'(y_4)=0$. Therefore, if $x^{(0)}=(y_1^{(0)},y_2^{(0)},y_3^{(0)},y_4^{(0)})\in W$ is smooth, then without loss of generality we can assume that $\frac{\partial H}{\partial y_4}(x^{(0)})=\phi _1'(y_4^{(0)})\not= 0$, and near $x^{(0)}$ the coordinates $y_1,y_2,y_3$ are local coordinates on $W$. Since $\widetilde{F}=(G,A,B)$, the Jacobian matrix of $\widetilde{F}$ at the point $x^{(0)}${, with respect to the coordinates $y_1,y_2,y_3$, is}
 $$
	J \widetilde{F}= 
	\left[ \begin{array}{ccc}
		\frac{\partial G}{\partial y_1}+ \frac{\partial G}{\partial y_4}\frac{dy_4}{dy_1}&\frac{\partial G}{\partial y_2}+ \frac{\partial G}{\partial y_4}\frac{dy_4}{dy_2}&\frac{\partial G}{\partial y_3}+ \frac{\partial G}{\partial y_4}\frac{dy_4}{dy_3}\\[8pt]
		\frac{\partial A}{\partial y_1}+ \frac{\partial A}{\partial y_4}\frac{dy_4}{dy_1}&\frac{\partial A}{\partial y_2}+ \frac{\partial A}{\partial y_4}\frac{dy_4}{dy_2}&\frac{\partial A}{\partial y_3}+ \frac{\partial A}{\partial y_4}\frac{dy_4}{dy_3}\\[8pt]
        \frac{\partial B}{\partial y_1}+ \frac{\partial B}{\partial y_4}\frac{dy_4}{dy_1}&\frac{\partial B}{\partial y_2}+ \frac{\partial B}{\partial y_4}\frac{dy_4}{dy_2}&\frac{\partial B}{\partial y_3}+ \frac{\partial B}{\partial y_4}\frac{dy_4}{dy_3}\\
	\end{array} \right].
	$$

By implicit differentiation, at $x^{(0)}$ we have 
$$\frac{dy_4}{dy_1}=-\frac{\frac{\partial H}{\partial y_1}}{\frac{\partial H}{\partial y_4}},\ \ \frac{dy_4}{dy_2}=-\frac{\frac{\partial H}{\partial y_2}}{\frac{\partial H}{\partial y_4}},\ \  \frac{dy_4}{dy_3}=-\frac{\frac{\partial H}{\partial y_3}}{\frac{\partial H}{\partial y_4}}.$$

Therefore, at $x^{(0)}$: 
$$
	J \widetilde{F}= 
	\left[ \begin{array}{ccc}
		\frac{\partial G}{\partial y_1}- \frac{\partial G}{\partial y_4}\frac{\frac{\partial H}{\partial y_1}}{\frac{\partial H}{\partial y_4}}&\frac{\partial G}{\partial y_2}- \frac{\partial G}{\partial y_4}\frac{\frac{\partial H}{\partial y_2}}{\frac{\partial H}{\partial y_4}}&\frac{\partial G}{\partial y_3}- \frac{\partial G}{\partial y_4}\frac{\frac{\partial H}{\partial y_3}}{\frac{\partial H}{\partial y_4}}\\[12pt]
		\frac{\partial A}{\partial y_1}- \frac{\partial A}{\partial y_4}\frac{\frac{\partial H}{\partial y_1}}{\frac{\partial H}{\partial y_4}}&\frac{\partial A}{\partial y_2}- \frac{\partial A}{\partial y_4}\frac{\frac{\partial H}{\partial y_2}}{\frac{\partial A}{\partial y_4}}&\frac{\partial A}{\partial y_3}- \frac{\partial A}{\partial y_4}\frac{\frac{\partial H}{\partial y_3}}{\frac{\partial H}{\partial y_4}}\\[12pt]
        \frac{\partial B}{\partial y_1}- \frac{\partial B}{\partial y_4}\frac{\frac{\partial H}{\partial y_1}}{\frac{\partial H}{\partial y_4}}&\frac{\partial B}{\partial y_2}- \frac{\partial B}{\partial y_4}\frac{\frac{\partial H}{\partial y_2}}{\frac{\partial B}{\partial y_4}}&\frac{\partial B}{\partial y_3}- \frac{\partial B}{\partial y_4}\frac{\frac{\partial H}{\partial y_3}}{\frac{\partial H}{\partial y_4}}\\
	\end{array} \right].
	$$

Since $\Psi =(G,A,B,K)$ and the relation $\alpha H=A+B-G-K$, where $\alpha \not= 0$, at $x^{(0)}$ the Jacobian determinant of $\Psi$ is a non-zero multiple of that for the matrix: 
 $$
	J_1= 
	\left[ \begin{array}{cccc}
		\frac{\partial G}{\partial y_1}&\frac{\partial G}{\partial y_2}&\frac{\partial G}{\partial y_3}&\frac{\partial G}{\partial y_4}\\[8pt]
		\frac{\partial A}{\partial y_1}&\frac{\partial A}{\partial y_2}&\frac{\partial A}{\partial y_3}&\frac{\partial A}{\partial y_4}\\[8pt]
        \frac{\partial B}{\partial y_1}&\frac{\partial B}{\partial y_2}&\frac{\partial B}{\partial y_3}&\frac{\partial B}{\partial y_4}\\[8pt]
        \frac{\partial H}{\partial y_1}&\frac{\partial H}{\partial y_2}&\frac{\partial H}{\partial y_3}&\frac{\partial H}{\partial y_4}\\
	\end{array} \right].
	$$

$J_1$ has the same determinant as the following matrix $J_2$, which is obtained by the following column operation: for $k=1,2,3$, the $k$-th column $=$ the $k$-th column minus $\frac{\frac{\partial H}{\partial y_k}}{\frac{\partial H}{\partial y_4}}$ {times the $4$-th column}: 
$$
	J_2= 
	\left[ \begin{array}{cccc}
		\frac{\partial G}{\partial y_1}- \frac{\partial G}{\partial y_4}\frac{\frac{\partial H}{\partial y_1}}{\frac{\partial H}{\partial y_4}}&\frac{\partial G}{\partial y_2}- \frac{\partial G}{\partial y_4}\frac{\frac{\partial H}{\partial y_2}}{\frac{\partial H}{\partial y_4}}&\frac{\partial G}{\partial y_3}- \frac{\partial G}{\partial y_4}\frac{\frac{\partial H}{\partial y_3}}{\frac{\partial H}{\partial y_4}}&\frac{\partial G}{\partial y_4}\\[10pt]
		\frac{\partial A}{\partial y_1}- \frac{\partial A}{\partial y_4}\frac{\frac{\partial H}{\partial y_1}}{\frac{\partial H}{\partial y_4}}&\frac{\partial A}{\partial y_2}- \frac{\partial A}{\partial y_4}\frac{\frac{\partial H}{\partial y_2}}{\frac{\partial A}{\partial y_4}}&\frac{\partial A}{\partial y_3}- \frac{\partial A}{\partial y_4}\frac{\frac{\partial H}{\partial y_3}}{\frac{\partial H}{\partial y_4}}&\frac{\partial A}{\partial y_4}\\[10pt]
        \frac{\partial B}{\partial y_1}- \frac{\partial B}{\partial y_4}\frac{\frac{\partial H}{\partial y_1}}{\frac{\partial H}{\partial y_4}}&\frac{\partial B}{\partial y_2}- \frac{\partial B}{\partial y_4}\frac{\frac{\partial H}{\partial y_2}}{\frac{\partial B}{\partial y_4}}&\frac{\partial B}{\partial y_3}- \frac{\partial B}{\partial y_4}\frac{\frac{\partial H}{\partial y_3}}{\frac{\partial H}{\partial y_4}}&\frac{\partial B}{\partial y_4}\\[10pt]
        0&0&0&\frac{\partial H}{\partial y_4}
	\end{array} \right].
	$$
Hence, at $x^{(0)}$ we have $\det (J_1)=\det (J_2)=-\frac{\partial H}{\partial y_4}\det (J \widetilde{F})$, which is a non-zero multiple of $\det (J \widetilde{F})$, as wanted.

A direct computation gives
$$
  J_\Psi
  =
  \phi _2'(y_1)\phi _2'(y_4)D'(y_2)D'(y_3)
  -
  \phi _2'(y_2)\phi _2'(y_3)D'(y_1)D'(y_4).
$$
Since the singular part of $W$  is of zero dimension, for the proof of the lemma it is therefore enough to show that no three-dimensional irreducible component of $H=0$ is contained in $J_\Psi=0$. Let
$$
  \deg \phi _2=d,
  \qquad
  \deg D=e,
  \qquad
  1\leq e<d.
$$
The leading homogeneous parts of $H$ and $J_\Psi$ are, up to nonzero
scalar factors,
$$
  H_{\mathrm{top}}
  =
  y_1^d-y_2^d-y_3^d+y_4^d
$$
and
$$
  (J_\Psi)_{\mathrm{top}}
  =
  (y_1y_2y_3y_4)^{e-1}
  \left(
    y_1^{d-e}y_4^{d-e}
    -
    y_2^{d-e}y_3^{d-e}
  \right).
$$
These equations describe the intersections of the projective closures of
$H=0$ and $J_\Psi=0$, respectively, with the hyperplane at infinity.

Suppose that a three-dimensional irreducible component of $H=0$ were
contained in $J_\Psi=0$.  Then a two-dimensional component of its
projective boundary would be contained in the common zero locus of
$$
  H_{\mathrm{top}}
  \quad\text{and}\quad
  (J_\Psi)_{\mathrm{top}}.
$$

No surface component of $H_{\mathrm{top}}=0$ lies in a coordinate
hyperplane, since $H_{\mathrm{top}}$ is not divisible by any of the
coordinates.  We may therefore work in the affine chart of the hyperplane
at infinity given by
$$
  y_1=1.
$$
In this chart, the two equations become
$$
  1-y_2^d-y_3^d+y_4^d=0
$$
and
$$
  y_4^{d-e}=(y_2y_3)^{d-e}.
$$
After excluding characteristics dividing $d-e$, the irreducible
components of the second equation are of the form
$$
  y_4=\zeta y_2y_3,
$$
where $\zeta$ is a $(d-e)$-th root of unity.

If $H_{\mathrm{top}}=0$ and $(J_\Psi)_{\mathrm{top}}=0$ had a common
surface component, then for some such $\zeta$, substituting
$$
  y_4=\zeta y_2y_3
$$
into $H_{\mathrm{top}}$ would make the polynomial
$$
  1-y_2^d-y_3^d+\zeta^d y_2^d y_3^d
$$
vanish identically.  This is impossible.  Hence
$H_{\mathrm{top}}$ and $(J_\Psi)_{\mathrm{top}}$ have no common surface
component.

It follows that $J_\Psi$ does not vanish identically on any
three-dimensional irreducible component of $W$.  Thus $\widetilde{F}$ has
generic rank three on every such component and is therefore generically
finite there.
\end{proof}

Lemma~\ref{lem:augmented-map-generic-finiteness} establishes Property~1.

\subsection{The proof of exponential sum bounds}
\label{subsec:fixed-pair-sheaf}


Let $W$ be a variety of pure dimension $3$ over $\mathbb Z$.
Even if $W$ is irreducible over $\mathbb Z$, its geometric generic
fibre need not be geometrically irreducible.  After base change to an
algebraic closure, it may decompose into finitely many irreducible
components.  In the equidimensional situation considered here, all
relevant {top-dimensional} components have dimension $3$.

Let
$$
  \widetilde F=(f_0,f_1,f_2):W\longrightarrow\mathbb A^3
$$
be a morphism.  For
$$
  m_0,m,m'\in\mathbb F_p,
$$
consider the exponential sum
$$
  S_{m_0,m,m',\widetilde F,W}
  =
  \sum_{x\in W(\mathbb F_p)}
  \psi_p\bigl(
    m_0f_0(x)+mf_1(x)+m'f_2(x)
  \bigr).
$$

For the particular variety $W$ and morphism $\widetilde F$ constructed
above, the two estimates required in Section~5 are the following.  We
emphasize that these statements concern the specific $W$ and
$\widetilde F$ arising in our problem; they are not valid for an
arbitrary variety and morphism.

\medskip

\noindent
\textbf{Generic bound.}
For $m_0=1$ and for $(m,m')\in \mathbb{A}^2(\mathbb{F}_p)$ outside a bounded-degree curve {(descended from a universal curve defined on $\mathbb{Z}$)},
$$
  \left|
    S_{1,m,m',\widetilde F,W}
  \right|
  =
  O(p^{3/2}).
$$

\medskip

\noindent
\textbf{Crude bound.}
For $m_0=1$ and all $(m,m')$,
$$
  \left|
    S_{1,m,m',\widetilde F,W}
  \right|
  =
  O(p^2).
$$

\medskip

We fix
$$
  \psi_p(t)=\exp(2\pi i t/p).
$$
For a tuple $(m_0,m,m'){\in \mathbb{A}^3(\mathbb{Z})}$, define the linear function
$$
  h_{m_0,m,m'}:\mathbb A^3\longrightarrow\mathbb A^1
$$
by
$$
  h_{m_0,m,m'}(z_0,z_1,z_2)
  =
  m_0z_0+mz_1+m'z_2.
$$
When no confusion can arise, we write simply $h$.

{Now we base change to $\mathbb{F}_p$ and} let
$$
  \mathcal L_\psi(h)
$$
denote the Artin--Schreier sheaf associated with the additive character
$$
  z\longmapsto\psi_p(h(z)).
$$
The exponential sum
$$
  S_{m_0,m,m',\widetilde F,W}
$$
corresponds to the pullback sheaf
$$
  \mathcal L_{\psi,h}
  :=
  \widetilde F^*\mathcal L_\psi(h).
$$

Let $H_c^j$ denote $\ell$-adic cohomology with compact support after
base change to $\overline{\mathbb F}_p$. The action of the Frobenius morphism at a geometric stalk of $\mathcal L_{\psi,h}$ is multiplication by $\psi _p(h\circ \widetilde{F}(.))$, hence the Grothendieck--Lefschetz
trace formula gives
$$
  S_{m_0,m,m',\widetilde F,W}
  =
  \sum_{j\geq0}
  (-1)^j
  \operatorname{Tr}
  \left(
    \operatorname{Frob}_p
    \mid
    H_c^j
    \bigl(
      W_{\overline{\mathbb F}_p},
      \mathcal L_{\psi,h}
    \bigr)
  \right).
$$
Here $\operatorname{Frob}_p$ denotes the geometric Frobenius acting on
the cohomology groups.  The fixed points of the corresponding arithmetic
Frobenius are precisely the points of $W(\mathbb F_p)$.

Deligne's Riemann hypothesis over finite fields controls the weights of the
Frobenius eigenvalues.  More precisely, since the above sheaf $\mathcal L_{\psi,h}$ has pointwise weight $0$, an eigenvalue occurring in 
$H_c^j$ has complex absolute value at most $p^{j/2}$.

{For further details on the use of Artin--Schreier
sheaves, the Grothendieck--Lefschetz trace formula, and Deligne's Riemann
hypothesis in estimates for exponential sums, see Expos\'e~VI of
\cite{SGA45}, in particular formula~(1.5.1), Scholie~1.9,
Theorem~1.17, and formula~(1.17.1).}

To derive an exponential sum estimate from the trace formula, we therefore
need two types of information:

\begin{enumerate}
\item[(i)] an upper bound $k$ for the degrees contributing to the trace,
or, in the simplest vanishing formulation,
$$
  H_c^j
  \bigl(
    W_{\overline{\mathbb F}_p},
    \mathcal L_{\psi,h}
  \bigr)
  =
  0
  \qquad (j>k);
$$

\item[(ii)] a uniform upper bound for the Betti numbers
$$
  \dim
  H_c^j
  \bigl(
    W_{\overline{\mathbb F}_p},
    \mathcal L_{\psi,h}
  \bigr).
$$
\end{enumerate}

The relevant constants must be independent of $p$.  Under such bounds,
one obtains an estimate of the form
$$
  \left|
    S_{m_0,m,m',\widetilde F,W}
  \right|
  \leq
  Cp^{k/2}
  +
  O\bigl(p^{(k-1)/2}\bigr).
$$

\begin{remark}
It is not strictly necessary that
$$
  H_c^j=0
  \qquad (j>k).
$$
It is enough that the trace of Frobenius on every higher cohomology group
be $O(p^{k/2})$.  For instance, cancellation may occur among eigenvalues
of larger individual absolute value.
\end{remark}

For the generic bound, we use semiperverse or perverse-sheaf results to
obtain the required support and weight profile.  For the crude bound, we
use the specific geometry of $W$, together with the classical
Bombieri-Weil estimates for exponential sums on algebraic curves.  The
necessary background will be recalled in the appendix.  Both arguments use
the geometric properties of $W$ and $\widetilde F$ established in
Subsection~\ref{subsec:fibres-images}.

\subsubsection{The generic bound}
\label{subsubsec:generic-bound}

We first (in Theorem \ref{thm:generic-fixed-map}) prove the result in case $\phi _1$ and $\phi _2$ are fixed, with implied constants depend only on $\phi _1, \phi _2\in \mathbb{Z}[y]$ and not on the prime number $p$. While this result is weaker than what we need (the implied constants depend only on the degree $d$), it illustrates the main ingredients needed to achieve the needed estimates. After that, we will show that the implied constants depend only on the degree $d$, by using the results and ideas from \cite{BonolisKowalskiWoo}, and hence conclude the proof for the generic bound. 

By the results of Subsection~\ref{subsec:fibres-images}, the variety $W$
and the morphism $\widetilde F$ satisfy the hypotheses of the following
result.  It therefore yields the required generic bound for a fixed pair of $\phi _1, \phi _2\in \mathbb{Z}[y]$, except that the implied constants may depend on $\phi _1,\phi _2$, and not only on the degree $d$. 

\begin{theorem}
\label{thm:generic-fixed-map}
Assume that
$$
  \widetilde F:W\longrightarrow\mathbb A^3
$$
is generically finite on every {geometric} three-dimensional component of $W$.
Assume also that no surface component of $W$ is mapped by
$\widetilde F$ to a zero-dimensional subvariety of $\mathbb A^3$.
Then, for all sufficiently large primes $p$, there exists an exceptional
curve in the plane $m_0=1$ {(descended from the same curve over $\mathbb{Z}$)} such that
$$
  \left|
    S_{1,m,m',\widetilde F,W}
  \right|
  =
  O(p^{3/2})
$$
for every $(m,m')$ outside this curve. {The constants in the above bounds depend only on $W$ and $\widetilde F$, and not on $p$.}
\end{theorem}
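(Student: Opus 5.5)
The approach is to push forward the Artin--Schreier problem to $\mathbb A^3$ and apply Katz--Laumon Fourier theory. Set $K=R\widetilde F_!\,\overline{\mathbb Q}_\ell[3]$ on $\mathbb A^3_{\overline{\mathbb F}_p}$. By the projection formula and the trace formula,
$$
  S_{m_0,m,m',\widetilde F,W}
  =\sum_{z\in\mathbb A^3(\mathbb F_p)}\operatorname{Tr}(\operatorname{Frob}_z\mid K)\,\psi_p(h_{m_0,m,m'}(z))\cdot(-1)^3 ,
$$
so $S$ is, up to sign, the trace function of the $\ell$-adic Fourier transform $\mathrm{FT}_\psi(K)$ at the point $(m_0,m,m')$. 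The plan has four steps: control the shape of $K$, deduce the shape of $\mathrm{FT}_\psi(K)$, read off the generic bound, and make everything uniform in $p$.

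\textbf{Step 1: the shape of $K$.} I will show that $K$ is semiperverse, and more precisely that its perverse cohomology is concentrated in degrees $\le0$ with a perverse part of weight $\le3$. Since $W$ is a hypersurface in $\mathbb A^4$, it is a local complete intersection of pure dimension $3$. Hence $\overline{\mathbb Q}_\ell[3]$ is perverse on $W$, with a weight $\le 3$ normalization.
\begin{itemize}
\item Property~2 (Lemma~\ref{lem:augmented-map-fibres}) says every fibre of $\widetilde F$ has dimension $\le1$, and more precisely that the locus where the fibre dimension is $\ge 1$ has image of dimension $\le1$. A relative-dimension count then gives $\dim\operatorname{supp}\mathcal H^{-i}(K)\le i$ for all $i$. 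I will use the standard criterion $\dim\{z:\dim \widetilde F^{-1}(z)\ge k\}+2k\le 3$ for the locus of $k$-dimensional fibres.
\item Property~1 (Lemma~\ref{lem:augmented-map-generic-finiteness}) gives that $\widetilde F$ is generically finite on each component. Hence ${}^p\mathcal H^0(K)$ contains a part that is generically a local system of weight $\le3$ on a dense open set of $\mathbb A^3$. All remaining perverse constituents, coming from the positive-dimensional fibres, are supported on a set of dimension $\le1$.
\end{itemize}

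\textbf{Step 2: the shape of $\mathrm{FT}_\psi(K)$.} Fourier transform is $t$-exact for the perverse $t$-structure and preserves weights up to the shift $+3$ (Katz--Laumon). The transform of ${}^p\mathcal H^0(K)$ is therefore perverse, mixed of weight $\le 6$. Its stalks on the dense open set where it is lisse, placed in degree $-3$, give eigenvalues of size $\le p^{3/2}$ after the normalization by $p^{3/2}$. This needs care: it is not automatic, so I will use Katz--Laumon's theorem that a perverse sheaf of weight $\le w$ has $\mathcal H^{-i}$ of punctual weight $\le w-i$ generically. The constituents supported on sets $Z$ of dimension $\le1$ have Fourier transforms that are lisse off the projective dual and conical loci of $Z$, a closed set of dimension $\le2$. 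I will track the non-lisse locus of $\mathrm{FT}$ as an algebraic subset $\Sigma\subset\mathbb A^3$ of dimension $\le2$.

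\textbf{Step 3: the generic bound.} I intersect $\Sigma$ with the plane $m_0=1$. I expect the main obstacle here: showing that this intersection is a curve and not the whole plane. That is, the plane $\{m_0=1\}$ must not be a component of $\Sigma$. I would first argue via homogeneity. The lisse-failure locus of a Fourier transform of a complex whose constituents are supported on a bounded-dimensional set is conical near infinity, and the only conical $2$-dimensional sets containing $\{m_0=1\}$ would contain the whole of $\mathbb A^3$. Failing that, I would compare with the hypothesis that no surface is contracted, which prevents constant-on-a-surface phases $f_0+mf_1+m'f_2$ and hence prevents a whole plane of $p^2$-size sums. Off the resulting curve, the stalk bound together with the Betti number bound gives $|S|\le Cp^{3/2}$.

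\textbf{Step 4: uniformity in $p$.} All objects are defined over $\mathbb Z[1/N]$. Deligne's generic base change and constructibility (spreading out $K$, $\mathrm{FT}$, and $\Sigma$ over an open of $\operatorname{Spec}\mathbb Z$) give Betti numbers, and the degree of the exceptional curve, bounded independently of $p$ for $p$ large. The exceptional curve is then the reduction of a curve over $\mathbb Z$.
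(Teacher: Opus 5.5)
Your core mechanism is sound, and it differs from the paper's. The paper stratifies. It removes $W_1=\widetilde F^{-1}(V_1)$ so that $\widetilde F$ is \'etale over $\mathbb A^3\setminus V_1$, applies Katz--Laumon to $\mathcal L_\psi(h)$ there and pulls back. It then treats each component $Y$ of $W_1$ separately, according to whether $\overline{\widetilde F(Y)}$ is a surface or a curve.

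You instead push forward once. The two hypotheses are exactly what makes $R\widetilde F_!\overline{\mathbb Q}_\ell[3]$ semiperverse. Note that the bound $\dim\{z:\dim\widetilde F^{-1}(z)\ge1\}\le1$ comes from generic finiteness on the components of the pure $3$-dimensional $W$, i.e.\ from Property~1; Property~2 only excludes $2$-dimensional fibres. $t$-exactness of the Fourier transform then gives $H^j_c(W_{\overline{\mathbb F}_p},\mathcal L_\psi(h\circ\widetilde F))=0$ for $j>3$ off a closed set $\Sigma$ of dimension $\le2$. This is obtained in one stroke, without the component-by-component analysis.

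Your weight discussion in Step~2 is more than you need. Deligne's Weil~II already says $H^j_c$ is mixed of weights $\le j$, so vanishing for $j>3$ plus bounded Betti numbers gives $O(p^{3/2})$. The Fourier transforms of constituents supported on curves play no separate role.

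The gap is in Step~3, which you rightly identify as the crux but do not close. The claim that the non-lisse locus of a Fourier transform is ``conical near infinity'' because the constituents have small support is not a valid principle. Moreover, ${}^p\mathcal H^0(K)$ has full support on $\mathbb A^3$ anyway, since $\widetilde F$ is dominant on each component. The actual reason is that $K$ does not involve $\psi$. Replacing $\psi$ by $\psi_\lambda$ amounts to rescaling $(m_0,m,m')$ by $\lambda$. Katz--Laumon's theorem, applied to such $\psi$-free complexes uniformly over all nontrivial characters, yields an exceptional set that is a homogeneous closed subset of dimension $\le2$, defined over $\mathbb Z[1/N]$; this is what the paper invokes. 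A cone of dimension $\le2$ cannot contain the plane $\{m_0=1\}$, since it would then contain every plane $\{m_0=c\}$ with $c\neq0$. So it meets that plane in a curve.

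Your fallback argument does not work. Membership in $\Sigma$ means non-vanishing of some $H^j_c$ with $j>3$ (or failure of lissity), which is not the same as the phase being constant on a surface. The no-contracted-surface hypothesis was already fully used in Step~1 and says nothing about whether a plane lies in $\Sigma$.

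Step~4 has a related problem: the Fourier transform cannot be spread out over an open subset of $\operatorname{Spec}\mathbb Z$, because $\mathcal L_\psi$ exists only in characteristic $p$. Uniformity of the Betti numbers and of the degree of the exceptional curve must again come from the uniform, homogeneous form of Katz--Laumon. For the Betti numbers alone, Katz's bounds on sums of Betti numbers also suffice.
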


\begin{proof}
We describe the stratification argument, emphasizing the points at which
genericity in $(m,m')$ is required.

There exists a hypersurface
$$
  V_1\subset V_0:=\mathbb A^3
$$
such that, with
$$
  W_1=\widetilde F^{-1}(V_1),
$$
the restriction
$$
  \widetilde F:
  W\setminus W_1
  \longrightarrow
  \mathbb A^3\setminus V_1
$$
is smooth. {We note that $V_1$ can be determined from a Jacobian criterion, and hence has degree bounded in terms of $d$. In particular, the number of connected components of $V_1$ is also bounded in terms of $d$. This applies to all varieties and morphisms we will encounter later.}  Since $\widetilde F$ is generically finite, the relative
dimension of this restriction is
$$
  r=0.
$$

For a generic choice of
$$
  (m_0,m,m')=(1,m,m'),
$$
the Katz--Laumon theory gives the required vanishing and support bounds for
$$
  \mathcal L_\psi(h)
$$
on $\mathbb A^3\setminus V_1$, {when the prime number $p$ is large enough}.  We recall, see Section 7.2 in the Appendix, that the
exceptional set in the Katz--Laumon theorem may be taken to be a
homogeneous hypersurface, {defined over $\mathbb{Z}$}, in the variables $m_0,m,m'$.  Consequently, we
may restrict to the affine plane $m_0=1$, and the exceptional locus there
is an algebraic curve.

{Here is a useful remark before we continue the proof. When the prime $p$ is large enough, then the base change of $\widetilde F:
  W\setminus W_1
  \longrightarrow
  \mathbb A^3\setminus V_1$ is also smooth. The same applies for all varieties and morphisms we will encounter later.}

{Pulling back the Artin--Schreier sheaf along the smooth
morphism of relative dimension zero preserves}
the relevant cohomological bound (see Sections 7.1 and 7.2 in the Appendix).  Consequently,
$$
  \left|
    S_{1,m,m',\widetilde F,W\setminus W_1}
  \right|
  =
  O(p^{3/2})
$$
for generic $(m,m')$. {Here we provide details on why this bound is uniform in $p$. As explained in the beginning of Section \ref{subsec:fixed-pair-sheaf}, we only need to bound the Betti number of $\widetilde{F}_{W\setminus W_1}^*(\mathcal L_\psi(h))$.   The results in \cite{KatzLaumon} show that the Betti  numbers of the Fourier transform of  $\widetilde{F}_{W\setminus W_1,!}(\overline{\mathbb{Q}_l})$ are bounded from above by a constant depending only on $W$ and $\widetilde{F}$. (Strictly speaking, we cannot directly apply results in \cite{KatzLaumon} since $\A^3\setminus V_1$ is not closed in $\A^3$, and also the sheaf $\widetilde{F}_{W\setminus W_1}$ may be only semiperverse and not perverse. However, Fourier transform preserves also semiperversity, so this difficulty is resolved. Concerning that $A^3\setminus V_1$ is not closed in $\A^3$, by the exicision property for etale cohomology of l-adic sheaves for the pair $\A^3, V_1$, see Sections 1.4 and 2.2 in \cite{BBD}, we obtain the desired property for $\A^3\backslash V_1$ by using that on $\A^3$ and on $V_1$. For further reference on this, see \cite{FouvryKatz}.)  Now, by Leray spectral sequence, see Section 7.1 in the appendix, the Betti numbers of $\widetilde{F}_{W\setminus W_1}^*(\mathcal L_\psi(h))$ are bounded in terms of that for the Fourier transform of $\widetilde{F}_{W\setminus W_1,!}(\overline{\mathbb{Q}_l})$, and hence are bounded in terms of $W$ and $\widetilde{F}$.}

Since
$$
  S_{1,m,m',\widetilde F,W}
  =
  S_{1,m,m',\widetilde F,W\setminus W_1}
  +
  S_{1,m,m',\widetilde F,W_1},
$$
it remains to estimate the contribution from $W_1$.

Let $Y$ be an irreducible component of $W_1$.  By the hypothesis that
no surface is mapped to a point,
$$
  \dim\overline{\widetilde F(Y)}
$$
is either $2$ or $1$.

Suppose first that $\overline{\widetilde F(Y)}$ is a surface.  Then
$$
  \widetilde F|_Y
$$
is generically finite.  After deleting appropriate lower-dimensional subsets, the same
generic Fourier-vanishing argument applies to the surface image.  The
remaining lower-dimensional pieces are curves or finite sets.  Their
contributions are $O(p)$ or better, by point counting and
one-dimensional exponential-sum estimates.  Thus the total contribution
of $Y$ is
$$
  O(p^{3/2}).
$$

{As explained in the case of $W\backslash W_1$, the constants in the above bound depend only in terms of $W$ and $\widetilde{F}$.}

Suppose next that $\overline{\widetilde F(Y)}$ is a curve.  Then the
generic fibre of
$$
  \widetilde F|_Y
$$
has dimension $1$.  After removing a curve $Y_2\subset Y$, we may
assume that
$$
  \widetilde F:
  Y\setminus Y_2
  \longrightarrow
  \widetilde F(Y\setminus Y_2)
$$
is smooth with one-dimensional fibres and affine image.  For generic
$(m,m')$, the Artin--Schreier sheaf on the image curve has the expected
one-dimensional vanishing bound $O(p^{1/2})$.  The smooth pullback estimate (again, see Sections 7.1 and 7.2 in the appendix) then gives
the required cohomological bound, corresponding to an $O(p^{3/2})$
estimate on $Y\setminus Y_2$.  The deleted curve $Y_2$ contributes
$O(p)$.

Summing over the finitely many irreducible components of $W_1$, we obtain
$$
  \left|
    S_{1,m,m',\widetilde F,W_1}
  \right|
  =
  O(p^{3/2})
$$
for $(m,m')$ outside the exceptional locus produced by the relevant
Fourier stratifications. {Again, constants in the above bound can be bounded in terms of $W$ and $\widetilde{F}$.}  

{Since the exceptional curves appearing in the above are base change to $\mathbb{F}_p$ by universal curves defined over $\mathbb{Z}$, their degrees are bounded by the degrees of these universal curves, which depend only in terms of $W$ and $\widetilde{F}$.} This completes the proof.
\end{proof}

{\bf Uniformity in the family:}  

We now provide additional arguments to show that the implied constants in the above depend only on the degree $d$, and not on the choice of $\phi _1, \phi _2\in \mathbb{Z}[y]$. To this end, we combine the proof of Theorem  \ref{thm:generic-fixed-map} with the following idea in \cite{BonolisKowalskiWoo} (see, in particular, the proof of Theorem 1.13 therein).  To obtain a uniformity Katz-Laumon type estimate for a universal family $\mathcal{V}_b\subset \mathbb{A}^N$ which depends on parameters $b\in \mathcal{B}$ a closed subvariety of $\mathbb{A}^r$ (both defined over $\mathbb{Z}$),  we push the concerned sheaves to the affine space $\mathbb{A}^r\times \mathbb{A}^N $ and perform Fourier transform with the parameter space $\mathcal{B}\times \mathbb{A}^N$. Then by Katz-Laumon theory, there is an exceptional set $\mathcal{E}\subset \mathbb{A}^r\times \mathbb{A}^N $ (over $\mathbb{Z}$, ) such that for $(b,h)\notin \mathcal{E}$, then the needed estimates are satisfied. In particular, there is a $p_0$  depends only on the family $\mathcal{V}$ (over $\mathbb{Z}$), so that if $p>p_0$, and $\mathcal{V}_p$  and $ \mathcal{E}_b:=\mathcal{E} \cap \{b\}\times \mathbb{A}^N$ have the correct dimensions after base change to $\mathbb{F}_p$, then $ \mathcal{E}_b(\mathbb{F}_p)$  is the exceptional set for $\mathcal{V}_b(\mathbb{F}_p)$. By intersection theory, it is clear that the degree of $\mathcal{E}_b(\mathbb{F}_p)$ is bounded in terms of $\mathcal{E}$ and hence in terms of the degree $d$. While the condition that $\mathcal{V}_p$  and $ \mathcal{E}_b:=\mathcal{E} \cap \{b\}\times \mathbb{A}^N$ have the correct dimensions after base change is only expected outside of a hypersurface $\mathcal{B}_1\subset \mathcal{B}$, we can apply again the argument for the family with parameter in $\mathcal{B}_1$. Repeating this procedure a finite number of times will terminate, because the dimension of $\mathcal{B}, \mathcal{B}_1,\ldots $ keeps decreasing, and we obtain a uniform bound for all parameters $b\in \mathcal{B}$ by the above procedure. 
 
In our case, the universal  families (defined over $\mathbb{Z}$) are those parametrizing the closed subvarieties $V_1,\ldots $ of $\mathbb{A}^3$ which appear in the proof of Theorem \ref{thm:generic-fixed-map}, where the parameters are pairs $(\phi _1,\phi _2)\in \mathbb{Z}[y]$ which vanish at $y=0$ and are linearly independent. The family of such pairs may be parametrized by $$
  \bigl(\mathbb A^{d+1}\setminus\mathbb A^d\bigr)
  \times
  \bigl(\mathbb A^1\setminus\{0\}\bigr)
  \times
  \bigl(\mathbb A^d\setminus\{0\}\bigr).
$$
Here $\mathbb A^{d+1}$ parametrizes polynomials of degree at most $d$,
while $\mathbb A^d$ parametrizes polynomials of degree at most $d-1$.
A triple
$$
  (\phi_1,\alpha,P)
$$
parametrizes the pair
$$
  \phi_1
  \qquad \text{and}\qquad
  \phi_2=\alpha\phi_1+P.
$$
To impose
$$
  \phi_1(0)=\phi_2(0)=0,
$$
we add the corresponding two linear equations on the coefficients. This is a finite dimensional algebraic variety, and we write it as a finite union of affine varieties. Then we obtain a finite number of  algebraic families $\mathcal{V}_1,\ldots $ for which the uniformity argument above can be applied. This completes the proof of the generic bound in Lemma \ref{lem:stratified-sqrt-S1}. We just remark here that after choosing the exceptional set  $\mathcal{E}$ as described in the above procedure, when base change to $\mathbb{F}_p$ we also exclude the parameters $b\in \mathcal{B}(\mathbb{F}_p)$ which make the pair $\phi _1,\phi _2$ no longer linear independent in $\mathbb{F}_p[y]$ (besides the other exclusions already mentioned before).

\subsubsection{The crude bound}
\label{subsubsec:crude-bound}

We next establish the uniform estimate
$$
  \left|
    S_\lambda(m,m')
  \right|
  =
  O(p^2)
$$
for all $m,m'\in\mathbb F_p$.

\begin{theorem}
\label{thm:crude-bound}
For every $m,m'\in\mathbb F_p$,
$$
  |S_\lambda(m,m')|
  =
  O_{\deg}(p^2).
$$ {The constant in the above bound depends only on the degree $d$, and not on $p$.}
\end{theorem}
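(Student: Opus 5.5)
Write $S_\lambda(m,m')=\sum_{t\in\Fp}\widetilde A_m(t)\overline{\widetilde A_{m'}(t)}$ as in \eqref{eq:S-lambda-target}. The plan is to bound the fibre traces $\widetilde A_m(t)$ pointwise in $t$, using a curve-sum estimate for all but $O_d(1)$ values of $t$. Fix $m$ and $t$. The fibre
$$
  C_t=\{(u,v):\phi_1(u)-\phi_1(v)=t\}
$$
is a plane curve of degree $\le d$. Hence $\widetilde A_m(t)$ is an additive character sum of the polynomial $g_m(u,v)=m\Xi(u,v)+\Omega(v)$ along $C_t$.

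The first step is to show that $C_t$ is geometrically irreducible for all but $O_d(1)$ values of $t$. By Stein's irreducibility theorem, the polynomial $\phi_1(u)-\phi_1(v)-t$ is reducible over $\overline{\Fp}$ for fewer than $\deg(\phi_1(u)-\phi_1(v))\le d$ values of $t$. The exception is when $\phi_1(u)-\phi_1(v)$ is composite, i.e.\ $=R(Q(u,v))$ with $\deg R\ge2$. To rule this out I would compare top-degree parts: $u^d-v^d$ would be a power of a homogeneous form, which is impossible. To keep everything characteristic free I would exclude $p\mid d$ and small $p$; alternatively I would use the version of Stein's theorem valid for large $p$ and take the count of exceptional $t$ to be at most $d^2$.

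The second step applies the Bombieri--Weil bound to every irreducible fibre $C_t$. We need $g_m$ to be nonconstant on $C_t$. If $g_m$ were constant on an irreducible $C_t$, look at the top degree. In the top-degree form, $\Omega$ has degree $<d$, and $C_t$ at infinity has points $u=\zeta v$ with $\zeta^d=1$. Using this and the projection $C_t\to\mathbb A^1_v$, I expect a degree argument to show that constancy forces $m\Xi+\Omega\in\overline{\Fp}[\phi_1(u)-\phi_1(v)]$ modulo the equation. That would make $D$ essentially a polynomial in $\phi_1$, contradicting $0<\deg D<d$. This is the step I expect to be the main obstacle: proving nonconstancy uniformly in $m$, including $m=0$, where the phase is $\Omega(v)$ alone and $v$ is nonconstant on $C_t$. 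I would handle it by restricting to the smooth projective model and examining pole orders of $g_m$ at the points at infinity. Granted nonconstancy, Bombieri's bound gives $|\widetilde A_m(t)|\le c_d\,p^{1/2}$.

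The third step assembles the estimate. For the $O_d(1)$ exceptional $t$, use the trivial bound $|\widetilde A_m(t)|\le|C_t(\Fp)|\le dp$. Splitting the $t$-sum, the exceptional $t$ contribute at most $O_d(1)\cdot(dp)^2=O_d(p^2)$. The generic $t$ contribute at most $p\cdot(c_dp^{1/2})^2=O_d(p^2)$. All constants depend only on $d$, uniformly in $\lambda$ and in the nontrivial character, which proves Theorem~\ref{thm:crude-bound}.
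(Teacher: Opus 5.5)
Your outline is the paper's proof: split $S_\lambda$ over the fibres $\phi_1(u)-\phi_1(v)=t$, use Stein's theorem so that all but $O_d(1)$ fibres are geometrically irreducible, and apply Bombieri--Weil on good fibres with the trivial $O_d(p)$ bound on bad ones. Your top-degree argument for non-compositeness ($u^d-v^d$ is squarefree when $p\nmid d$, so it is not an $r$-th power with $r\ge2$) is cleaner than the paper's.

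\textbf{The gap.} The step you flag yourself, nonconstancy of $g_m$ on an irreducible fibre, is never proved. The mechanism you guess at, that constancy would make $D$ ``a polynomial in $\phi_1$'', is not what happens. Constancy on a single fibre $C_t$ only gives divisibility by $\phi_1(u)-\phi_1(v)-t$; it implies no global relation between $D$ and $\phi_1$.

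\textbf{The missing idea.} It is a one-line reduction modulo the fibre equation, which is how the paper argues. Since $\phi_2=\alpha\phi_1+D$, on $C_t$ one has $\Xi(u,v)=\alpha t+D(u)-D(v)$, so
\[
  g_m\equiv m\alpha t+mD(u)-(m+\lambda)D(v)\pmod{\phi_1(u)-\phi_1(v)-t}.
\]
Because $\lambda\neq0$, the coefficients $m$ and $-(m+\lambda)$ of $D(u)$ and $D(v)$ are not both zero. Hence for every constant $c$, the right-hand side minus $c$ is a nonzero polynomial of degree at most $\deg D<d$. Suppose $g_m$ were identically $c$ on the irreducible curve $C_t$. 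Then the Nullstellensatz would force the irreducible degree-$d$ polynomial $\phi_1(u)-\phi_1(v)-t$ to divide a nonzero polynomial of smaller degree, which is impossible. This handles every $m$ at once, including $m=0$ and $m=-\lambda$, with no pole-order analysis.

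\textbf{One further point.} To invoke Bombieri you also need $g_m$ not to be of the form $h^p-h+c$ on $C_t$. For a nonconstant polynomial phase of degree $<d$ on a degree-$d$ curve, the total pole order at infinity is less than $d^2$. A nonconstant $h$ would create a pole of order at least $p$, so this case is excluded once $p>d^2$. You should state this rather than passing directly from nonconstancy to the Bombieri bound.
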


\begin{proof}
For each $u\in\mathbb A^1$, define
$$
  V_u
  =
  \left\{
    (x,y)\in\mathbb A^2:
    \phi_1(x)-\phi_1(y)=u
  \right\}.
$$
This is an affine plane curve of degree at most $d$.

The correlation sum may be written as
\begin{align*}
  S_\lambda(m,m')
  =\,&
  \sum_{u\in\mathbb F_p}
  \sum_{\substack{
    (y_1,y_2)\in V_u(\mathbb F_p)\\
    (y_3,y_4)\in V_{-u}(\mathbb F_p)}}
  \psi\Bigl(
    D(y_4)-m'\phi_2(y_4)+m'\phi_2(y_3)
  \Bigr)
  \\
  &\hspace{8em}\times
  \psi\Bigl(
    -D(y_2)-m\phi_2(y_2)+m\phi_2(y_1)
  \Bigr).
\end{align*}

For each fixed $u$, the four-variable sum factors as
\begin{align*}
  &
  \sum_{(y_3,y_4)\in V_{-u}(\mathbb F_p)}
  \psi\Bigl(
    D(y_4)+m'\phi_2(y_4)-m'\phi_2(y_3)
  \Bigr)
  \\
  &\qquad\times
  \sum_{(y_1,y_2)\in V_u(\mathbb F_p)}
  \psi\Bigl(
    -D(y_2)-m\phi_2(y_2)+m\phi_2(y_1)
  \Bigr).
\end{align*}

Fix $u\in\mathbb F_p$.  Consider first
$$
  \Sigma_{1,u}(m')
  =
  \sum_{(y_3,y_4)\in V_{-u}(\mathbb F_p)}
  \psi\Bigl(
    D(y_4)+m'\phi_2(y_4)-m'\phi_2(y_3)
  \Bigr).
$$
Let
$$
  \Lambda_1
  \subset
  \mathbb A^1(\overline{\mathbb F}_p)
$$
be the set of $u$ for which the phase
$$
  D(y_4)+m'\phi_2(y_4)-m'\phi_2(y_3)
$$
is constant on a geometric irreducible component of $V_{-u}$.

We claim that
$$
  |\Lambda_1|\leq d-1.
$$
The polynomial
$$
  \phi_1(y_3)-\phi_1(y_4)
$$
is indecomposable in the sense required by Stein's irreducibility theorem.
{(This means that $\phi_1(y_3)-\phi_1(y_4)$ is not of the
form $\kappa(\tau(y_3,y_4))$ for a one-variable polynomial $\kappa$ of
degree at least $2$ and a polynomial $\tau$.  This fact follows by taking
the partial derivatives of $\phi_1(y_3)-\phi_1(y_4)$ and observing that no
polynomial of the form $\kappa(\tau(y_3,y_4))$ can have such partial
derivatives.)} We use here the version of Stein's theorem over algebraically closed fields
of arbitrary characteristic; see \cite{lorenzini}.

By Stein's irreducibility theorem, there are at most $d-1$ values
$$
  u\in\overline{\mathbb F}_p
$$
for which
$$
  \phi_1(y_3)-\phi_1(y_4)+u
$$
is reducible.

Suppose now that this polynomial is geometrically irreducible.  Then
$V_{-u}$ is geometrically irreducible.  If the phase were constant on
$V_{-u}$, there would exist
$$
  v\in\overline{\mathbb F}_p
$$
such that
$$
  \phi_1(y_3)-\phi_1(y_4)+u
$$
divides
$$
  D(y_4)+m'\phi_2(y_4)-m'\phi_2(y_3)-v.
$$
On $V_{-u}$, using
$$
  \phi_2=\alpha\phi_1+D,
$$
the latter polynomial may be rewritten as
$$
  D(y_4)
  +m'\phi_2(y_4)
  -m'\phi_2(y_3)-v
  =
  D(y_4)
  +m'\bigl(D(y_4)-D(y_3)\bigr)
  +\alpha m'u-v.
$$
This is a nonconstant polynomial of degree strictly smaller than
$$
  \deg\bigl(
    \phi_1(y_3)-\phi_1(y_4)+u
  \bigr)
  =
  d.
$$
Therefore such a divisibility relation is impossible.  This proves
$$
  |\Lambda_1|\leq d-1.
$$

For
$$
  u\in\mathbb F_p\setminus\Lambda_1,
$$
{the Bombieri--Weil estimate for exponential sums on
algebraic curves (see Section~VI of \cite{bombieri} and Section~7.3 of the
appendix) gives}
$$
  |\Sigma_{1,u}(m')|
  =
  O_{\deg}(p^{1/2}).
$$
Here the relevant curves are geometrically irreducible outside the
exceptional set considered above; after passing to their normalizations,
the required estimate is the standard Bombieri--Weil bound.  We also use
{the fact, valid once $p$ is sufficiently large relative
to the degrees of the objects involved, that the phase is not of
Artin--Schreier form}
$$
  h^p-h+c.
$$
Since all relevant degrees are $<p$, this exceptional case is ruled out
by the usual pole-order argument at infinity. 

Similarly, there exists a set
$$
  \Lambda_2
  \subset
  \mathbb A^1(\overline{\mathbb F}_p),
  \qquad
  |\Lambda_2|\leq d-1,
$$
such that, for
$$
  u\in\mathbb F_p\setminus\Lambda_2,
$$
one has
$$
\left|
  \sum_{(y_1,y_2)\in V_u(\mathbb F_p)}
  \psi\Bigl(
    -D(y_2)-m\phi_2(y_2)+m\phi_2(y_1)
  \Bigr)
\right|
=
O_{\deg}(p^{1/2}).
$$

We split
$$
  S_\lambda(m,m')
  =
  S_{\mathrm{good}}+S_{\mathrm{bad}},
$$
where $S_{\mathrm{good}}$ denotes the contribution from
$$
  u\in
  \mathbb F_p\setminus(\Lambda_1\cup\Lambda_2),
$$
and $S_{\mathrm{bad}}$ denotes the contribution from
$$
  u\in
  \mathbb F_p\cap(\Lambda_1\cup\Lambda_2).
$$

For every good $u$, both one-dimensional factors are
$O_{\deg}(p^{1/2})$.  Their product is therefore $O_{\deg}(p)$.
Summing over at most $p$ possible values of $u$, we obtain
$$
  |S_{\mathrm{good}}|
  =
  O_{\deg}(p^2).
$$

For the bad values of $u$, we use the trivial point-counting estimate.
Since
$$
  |\Lambda_1\cup\Lambda_2|
  \leq
  2(d-1)
  =
  O_{\deg}(1),
$$
there are only boundedly many such values.  For each of them,
$$
  |V_u(\mathbb F_p)|
  =
  O_{\deg}(p)
$$
and
$$
  |V_{-u}(\mathbb F_p)|
  =
  O_{\deg}(p).
$$
Consequently,
$$
  |V_u(\mathbb F_p)\times V_{-u}(\mathbb F_p)|
  =
  O_{\deg}(p^2),
$$
and hence
$$
  |S_{\mathrm{bad}}|
  =
  O_{\deg}(p^2).
$$

Combining the good and bad contributions gives
$$
  |S_\lambda(m,m')|
  =
  O_{\deg}(p^2)
$$
uniformly for all $m,m'\in\mathbb F_p$. {As in the previous Subsubsection, we can see that the constants in the above bounds depend only on the degree $d$, and not on $p$.}
\end{proof}

\begin{remark}  One might hope that the bound in the above theorem holds for every sufficiently regular
morphism $\widetilde F$, for example for every smooth or finite morphism.
This is false.  Consider
$$
  W=\mathbb A^3
$$
and
$$
  \widetilde F(y_1,y_2,y_3)
  =
  (y_1^2,y_2^2,y_3^2).
$$
For $m=m'=0$,
$$
  S_{1,0,0,\widetilde F,W}
  =
  p^2
  \sum_{y_1\in\mathbb F_p}
  \psi_p(y_1^2).
$$
The one-variable sum is a Gauss sum of absolute value $p^{1/2}$, and
therefore
$$
  \left|
    S_{1,0,0,\widetilde F,W}
  \right|
  =
  p^{5/2}.
$$
Thus the $O(p^2)$ estimate must use the particular form of $W$ and of
the phase arising in our problem.
\end{remark}

{
\begin{proof}[Proof of Theorem~\ref{thm:main}]
Lemma~\ref{lem:stratified-sqrt-S1} supplies the two resonant
exponential-sum estimates required in Section~\ref{sec:admissible}, with
constants and degrees of the exceptional loci depending only on the
prescribed degrees.  The conclusion of that section therefore proves Kernel
Estimate~\ref{est:HLY-kernel} for every nonzero Han--Lacey--Yang shift: the
distinct-degree and equal-degree quadratic cases follow from
\cite{HLY}, the equal-degree nonresonant case follows from
Lemma~\ref{lem:HLY-equal-degree-nonresonant}, and the remaining resonant
case follows from Lemma~\ref{lem:admissible-matrix}.  Proposition~\ref{prop:threshold}
then gives a nontrivial polynomial corner whenever
$|A|\geq C p^{2-1/14}$, after first fixing $C$ and then enlarging the
degree-dependent prime threshold if necessary.  This is precisely the
assertion of Theorem~\ref{thm:main}.
\end{proof}
}

\medskip

\section{Appendix: Preliminaries on exponential sums}
\label{sec:exponential-sum-preliminaries}

{We collect here some results on exponential sums used in
Section~6.  We shall only consider the
standard Artin--Schreier sheaf on an affine space or its restriction to a
closed smooth subvariety.}

We first recall some minimal necessary notions from \cite{KatzLaumon}. 

{Let $X$ be a variety over $S$, with structure morphism
$\pi:X\to S$, and let $\mathcal{F}$ be a bounded constructible
$l$-adic complex.  We define the dual $D_{X/S}(\mathcal{F})$ using the
relative dualising complex
$K_{X/S}=\pi^{!}(\overline{\mathbb{Q}}_l)$.  Here $\pi^{!}$ is the right
adjoint of the compactly supported pushforward $\pi_!$.  If $\pi$ is
smooth of relative dimension $k$, then $\pi^{!}=\pi^*[2k](k)$.  More
precisely,
$D_{X/S}(\mathcal{F})=R\mathcal{H}om_X(\mathcal{F},K_{X/S})$, where
$R\mathcal{H}om_X$ denotes the derived Hom functor.}

{We say that $\mathcal{F}$ is reflexive if the morphism
$\mathcal{F}\rightarrow D_{X/S}D_{X/S}(\mathcal{F})$ is an isomorphism.
The definition in \cite{KatzLaumon} requires a stronger condition,
involving all good base changes $S'\rightarrow S$; that condition is
satisfied after restriction to an appropriate dense open subset of $S$.
Here this stronger condition is not needed: we may fix a good base change
$S'\rightarrow S$ for which the estimates for the relevant Fourier
transforms in the cited paper hold.}

We say that $\mathcal{F}$ is perverse over $S$ if it is reflexive and satisfies the following two conditions on support: for all $j$

(A) $\dim _S (Supp \mathcal{H}^j(\mathcal{F}))\leq -j$, 

and 

(B) $\dim _S (Supp \mathcal{H}^j(D_{X/S}(\mathcal{F})))\leq -j$. 

If $f:X\rightarrow Y$ is a smooth morphism of relative dimension $d$,
{$X$ and $Y$ are smooth affine varieties}, and
$\mathcal{F}$ is perverse on $Y$, then $f^*(\mathcal{F})[d]=f^{!}(\mathcal{F})[-d](-d)$ is perverse. 

In applications, we need to use a more general notion: semiperversity, which concerns those $\mathcal{F}$ which satisfy only  condition (A) above. For example, the  (compactly supported) pushforward of a perverse sheaf by a smooth morphism may not be perverse, but only perverse. 

We also collect some fundamental facts about weights which are needed.
{For details, see Expos\'e~VI of \cite{SGA45}.  Here we
assume that $X$ is defined over $\mathbb{F}_q$, where $q$ is a prime
power.}

An $l$-adic sheaf $\mathcal{F}$ is pointwise of weight $n$ if for all
closed points $x\in X$ {(if
$X=\operatorname{Spec}(A)$ is affine, its closed points correspond to the
maximal ideals of $A$)} the eigenvalues of the pullback of the Frobenius
morphism at the stalk $\mathcal{F}_x$ are complex algebraic numbers with
absolute value $q_x^{n/2}$, {where $q_x$ is the
cardinality of the residue field $\kappa(x)$.}

Deligne's Riemann hypothesis \cite{DeligneWeilII} is the result that if $\mathcal{F}$ has pointwise weight $n$, then all eigenvalues of the pullback of the Frobenius morphism on $H^j_c(X_{\overline{\mathbb{F}_q}}, \mathcal{F})$ have absolute values $\leq q^{(n+j)/2}$. {Weil's classical Riemann hypothesis} corresponds to the case $\mathcal{F}=\overline{\mathbb{Q}_l}$. Weights behave well under $f^*, f^{!}$ (under an appropriate shift and Tate twist as above) and $f_{!}$, where $f$ is a smooth morphism between smooth and affine varieties.   

{For further material on perverse sheaves and weights
useful for our purposes, see \cite[Sections~4 and~5.4]{BonolisKowalskiWoo}.}

\subsection{Vanishing dimension, Betti numbers and weights of sheaves under pullback}

{We next describe how vanishing dimension, Betti numbers and weight behave
under pullback by a smooth quasi-projective morphism.  Our main reference
is Expos\'e~I of \cite{SGA45}.}

We recall that a morphism $G:X\longrightarrow Y$ is quasi-projective if it is the composition of  an open immersion $X\hookrightarrow \mathbb{P}^N_Y$ and the projection $\mathbb{P}^N_Y\rightarrow Y$. If $X$ and $Y$ are affine, then every morphism $G$ is automatically quasi-projective. (Indeed, assume that $X$ is a closed subvariety of $\mathbb{A}^N$. Let $\Gamma \subset \mathbb{A}^N\times Y$ be the graph of $G$. Then $X$ is isomorphic to $\Gamma$, and the morphism $G$ is isomorphic to the projection  $\mathbb{A}^N\times Y\rightarrow Y$. $\Gamma$ has an open immersion into $\mathbb{P}^N\times Y$.)

Let $X$ and $Y$ be affine varieties, defined over $\mathbb{F}_q$, where $q$ is a power of a pime number $p$, and 
$$
  G:X\longrightarrow Y
$$
be a smooth morphism {of} relative dimension $r$, {with $\deg (G)<p$},  and let
$\mathcal L$ be an $\ell$-adic sheaf on $Y$.  The Leray spectral
sequence for compactly supported cohomology, together with the projection
formula, has the form
$$
  E_2^{i,j}
  =
  H_c^i
  \bigl(
    Y\otimes \overline{\mathbb{F}_p} ,
    \mathcal L\otimes ^{\mathbb{L}}R^jG_!\overline{\mathbb Q}_\ell
  \bigr)
  \Longrightarrow
  H_c^{i+j}
  \bigl(
    X\otimes \overline{\mathbb{F}_p},G^*\mathcal L
  \bigr).
$$
We note that $R^jG_!\overline{\mathbb Q}_\ell$ is constructible and moreover perverse. Since every geometric fibre of $G$ has dimension $r$, by {the base-change theorem} we obtain
$$
  R^jG_!\overline{\mathbb Q}_\ell=0
  \qquad (j>2r).
$$
Consequently, if there is $k_Y>0$ so that for every
$
  0\leq i\leq2r, 
$
and 
$
  j>k_Y, 
$
we have
$$
  H_c^i
  \bigl(
    Y\otimes \overline{\mathbb{F}_p},
    \mathcal L\otimes ^{\mathbb{L}}R^jG_!\overline{\mathbb Q}_\ell
  \bigr)
  =
  \textcolor{blue}{0,}
$$
then
\begin{equation}
\label{eq:smooth-pullback-vanishing}
  H_c^k(X\otimes \overline{\mathbb{F}_p},G^*\mathcal L)=0
  \qquad (k>k_Y+2r).
\end{equation}

Note that having a bound on vanishing dimension as in the above formula
implies a bound on the weights that occur in estimating exponential sums.
{In the applications in
Section~\ref{subsubsec:generic-bound}, we need this vanishing bound to hold
when $\mathcal{L}$ is an Artin--Schreier sheaf on $Y$ (corresponding to a
generic linear map) and $k_Y=\dim(Y)$.  This follows from the work of Katz
and Laumon, as summarized below.}

{We note that the Betti numbers of $H_c^k(X,G^*\mathcal L)$ is also bounded in terms of that for $H_c^i
  \bigl(
    Y,
    \mathcal L\otimes ^{\mathbb{L}}R^jG_!\overline{\mathbb Q}_\ell
  \bigr)$.}

\subsection{\texorpdfstring{{Katz--Laumon theorem}}{Katz--Laumon theorem}}

We now recall the Fourier-transform theorem of
{Katz and Laumon}~\cite{KatzLaumon}, which allows us to obtain the bound on vanishing dimension mentioned at the end of the previous subsection.  

Let $Y \subset \mathbb{A}^N$ be a smooth affine variety {(defined over $\mathbb{Z}$)} of pure dimension $n$, of bounded
complexity, and let $\mathcal K$ be a constructible complex of bounded
complexity on $Y$.  Assume moreover that $\mathcal{K}$ is perverse. 

{We now base change to $\mathbb{F}_p$ for a large enough prime number $p$ and} consider the family of additive twists
$$
  \mathcal K\otimes ^{\mathbb{L}}\mathcal L_\psi(h),
  \qquad
  h(z)=m_1z_1+\cdots+m_Nz_N.
$$
{Katz and Laumon} (see Sections 2 and 5 in the cited paper) show that the corresponding Fourier transform is perverse, and on a dense open subset of the dual affine
space its cohomology is concentrated in degree $-N$. {By the base-change theorem (see Section~IV of Expos\'e~I of \cite{SGA45}),} the stalk of this Fourier transform at a geometric point $(m_1,\ldots ,m_N)$ is precisely the compactly supported cohomology groups $H_c^i
  \bigl(
    Y\otimes \overline{\mathbb{F}_p},
    \mathcal K\otimes ^{\mathbb{L}}\mathcal L_\psi(h)
  \bigr)$ considered in the previous subsection.  More precisely, outside a proper homogeneous algebraic subset  of the
dual parameter space {(descended from a universal one $\mathcal{E}$ on $\mathbb{Z}$)}, the compactly supported cohomology of the
corresponding additive twist has the expected middle-dimensional
vanishing. This result is extended to the case where $Y$ is not necessarily smooth or closed in $ \mathbb{A}^N$, and also to semiperverse sheaves. However, here we only have an upper bound for the vanishing dimension, and the cohomology may not be concentrated at one degree as in the perverse sheaf case.  {Applying this to
$\mathcal{K}=R^qG_!\overline{\mathbb Q}_\ell[\dim(Y)]$ in the previous
subsection, we obtain the bound needed for the generic estimate in
Section~\ref{subsubsec:generic-bound}.}

This result is  extended in \cite{FouvryKatz} , which further analyzes what to expect for the exponential sum when the parameters are in the exceptional set $\mathcal{E}$. In this paper, we do not need this deeper stratification result.

\subsection{\texorpdfstring{{Bombieri--Weil upper bounds for
exponential sums on algebraic curves}}{Bombieri--Weil upper bounds for
exponential sums on algebraic curves}}

In the crude bound in Section~\ref{subsubsec:crude-bound}, we need a bound
which is valid for all parameters (instead of only generic parameters as
in Section~\ref{subsubsec:generic-bound}). The bound needed is furnished
by the classical Bombieri--Weil bounds (see Theorem 6 in
\cite{bombieri}), which we now recall. {If $X$ is a smooth
affine curve defined over $\mathbb{F}_p$ and
$f:X\rightarrow\mathbb{A}^1$ is a morphism whose restriction to each
geometric component of $X$ is not of the form $h^p-h+c$, for a rational
function $h$ defined over $\overline{\mathbb{F}_p}$ and a constant
$c\in\overline{\mathbb{F}_p}$, then}
we have a bound
$$\sum _{x\in X(\mathbb{F}_p)}e_p(f(x))=O(p^{1/2}),$$
{where the implied constant depends only on the number of
components of $X$ and their Betti numbers.  These quantities are bounded
above in terms of the degree of the curve $X$.}

\bigskip
\bigskip
\bigskip
\bigskip
{\bf Acknowledgements.}  The authors acknowledge the use of LLM models in collecting relevant references for exponential sums as well as helping polish the paper. The second author was supported by NSTC under Grant 111-2115-M-002-010-MY5. A part of this work was done while the third author was visiting the University of Tokyo and Shanghai University of Finance and Economics, and he would like to thank the institutes for support and inspiring environment.   The third author thanks Fabrizio Catanese for providing the reference \cite{milnor}.


\bigskip
\bigskip


























\begin{thebibliography}{99}
\bibitem{BBD}
A.~A. Beilinson, J.~Bernstein  and P.~Deligne,
\emph{Faisceaux pervers}, Ast\'erisque \textbf{100}, 1982.

\bibitem{BergelsonLeibman}
V.~Bergelson and A.~Leibman,
\emph{Polynomial extensions of van der Waerden's and Szemer\'edi's theorems},
J. Amer. Math. Soc. \textbf{9} (1996), no.~3, 725--753.

\bibitem{BonolisKowalskiWoo}
D.~Bonolis, E.~Kowalski  and K.~Woo,
\emph{Stratification theorems for exponential sums in families}, arXiv:2506.18299.

\bibitem{BourgainChang}
J.~Bourgain and M.-C.~Chang,
\emph{Nonlinear Roth type theorems in finite fields},
Israel J. Math. \textbf{221} (2017), no.~2, 853--867.

\bibitem{bombieri}
E.~Bombieri,
\emph{On exponential sums in finite fields},
Amer. J. Math. \textbf{88} (1966), no.~1, 71--105.

\bibitem{SGA45}
P.~Deligne,
\emph{Cohomologie \'etale}, SGA $4\frac12$,
Lecture Notes in Mathematics, vol.~569, Springer, 1977.

\bibitem{DeligneWeilII}
P.~Deligne,
\emph{La conjecture de Weil. II}, Publ. Math. IH\'ES \textbf{52} (1980), 137--252.

\bibitem{DLS}
D.~Dong, X.~Li  and W.~Sawin,
\emph{Improved estimates for polynomial Roth type theorems in finite fields},
J. Anal. Math. \textbf{141} (2020), no.~2, 689--705.



\bibitem{FouvryKatz}
\'E.~Fouvry and N.~M. Katz,
\emph{A general stratification theorem for exponential sums, and applications},
J. Reine Angew. Math. \textbf{540} (2001), 115--166.

\bibitem{FKMConductor}
\'E.~Fouvry, E.~Kowalski and P.~Michel,
\emph{On the conductor of cohomological transforms}, Ann. Fac. Sci. Toulouse Math.
\textbf{30} (2021), no.~1, 203--254.

\bibitem{HLY}
R.~Han, M.~T. Lacey and F.~Yang,
\emph{A polynomial Roth theorem for corners in finite fields},
Mathematika \textbf{67} (2021), no.~4, 885--896.

{
\bibitem{KucaDistinct}
B.~Kuca,
\emph{Multidimensional polynomial Szemer\'edi theorem in finite fields for
polynomials of distinct degrees},
Israel J. Math. \textbf{259} (2024), no.~2, 589--620.

\bibitem{Kuca}
B.~Kuca,
\emph{Multidimensional polynomial patterns over finite fields: bounds,
counting estimates and Gowers norm control},
Adv. Math. \textbf{448} (2024), Paper No.~109700, 61 pp.

\bibitem{Lim}
Z.~L. Lim,
\emph{Uniform nonlinear Szemer\'edi theorem for corners in finite fields},
arXiv:2501.04887 (2025).
}


\bibitem{lorenzini}
D.~Lorenzini,
\emph{Reducibility of polynomials in two variables},
J. Algebra \textbf{156} (1993), 65--75.

\bibitem{QST}
W.~Sawin, A.~Forey, J.~Fres\'an  and E.~Kowalski,
\emph{Quantitative sheaf theory}, J. Amer. Math. Soc. \textbf{36} (2023), no.~3,
653--726.

\bibitem{KatzSingular}
N.~M. Katz,
\emph{Estimates for ``singular'' exponential sums},
Int. Math. Res. Not. \textbf{1999}, no.~16, 875--899.

\bibitem{KatzSums}
N.~M. Katz,
\emph{Sums of Betti numbers in arbitrary characteristic},
Finite Fields Appl. \textbf{7} (2001), no.~1, 29--44.


\bibitem{KatzLaumon}
N.~M. Katz and G.~Laumon,
\emph{Transformation de Fourier et majoration de sommes exponentielles},
Publ. Math. IH\'ES \textbf{62} (1985), 145--202.

\bibitem{Laumon}
G.~Laumon,
\emph{Transformation de Fourier, constantes d'\'equations fonctionnelles et conjecture de Weil},
Publ. Math. IH\'ES \textbf{65} (1987), 131--210.


\bibitem{Peluse}
S.~Peluse,
\emph{Three-term polynomial progressions in subsets of finite fields},
Israel J. Math. \textbf{228} (2018), no.~1, 379--405.

\bibitem{Weil}
A.~Weil,
\emph{On the Riemann hypothesis in function-fields},
Proc. Natl. Acad. Sci. U.S.A. \textbf{27} (1941), 345--347.
\end{thebibliography}
\end{document}